\newtheorem{theorem}{Theorem}
\newtheorem{lemma}{Lemma}
\newtheorem{definition}{Definition}
\newtheorem{remark}{Remark}
\newtheorem{corollary}{Corollary}
\newtheorem{case}{Case}
\newtheorem{example}{Example}
\title{Connected Hypergraphs with Small Spectral Radius}
\author{
Linyuan Lu
\thanks{University of South Carolina, Columbia, SC 29208,
({\tt lu@math.sc.edu}). This author was supported in part by NSF
grant DMS 1300547 and ONR grant N00014-13-1-0717. }
\and
Shoudong Man
\thanks{Renmin University of China, Beijing 100872, P.R. China({\tt shoudongmanbj@ruc.edu.cn}).
This author was supported by the fund from the China Scholarship Council (CSC). }
}
\begin{document}
\maketitle
\begin{abstract}
  In 1970 Smith classified all connected graphs with the spectral
  radius at most $2$. Here the spectral radius of a graph is the
largest eigenvalue of its adjacency matrix.
Recently, the definition
of spectral radius has been extended to $r$-uniform hypergraphs.
In this paper, we
generalize the Smith's theorem to $r$-uniform hypergraphs. We show
that the smallest limit point of the spectral radii of connected
$r$-uniform hypergraphs is $\rho_r=(r-1)!\sqrt[r]{4}$. We discovered a novel method for computing the spectral
radius of hypergraphs, and classified
all connected $r$-uniform hypergraphs with spectral radius at most
$\rho_r$.

\noindent{\em AMS classifications}: 05C50, 05C35, 05C65 \\
{\em Keywords}: Hypergraphs, Spectral Radius, Smith's theorem,
$\alpha$-normal
\end{abstract}

\section{Introduction}
 The spectral radius $\rho(G)$ of a graph $G$ is the largest eigenvalue of its
 adjacency matrix. The connected graphs with spectral radius at most
 $2$ are classified by Smith \cite{smith} in 1970: the graphs with spectral radius
 less than 2 are exactly the simple-laced Dynkin Diagrams:  $A_n$,
 $D_n$, $E_6$, $E_7$, and $E_8$, while the graphs with spectral radius
 $2$ are the extended simple-laced Dynkin Diagram: $\tilde A_n$,
 $\tilde D_n$, $\tilde E_6$, $\tilde E_7$, $\tilde D_8$. The
 simple-laced Dynkin Diagrams have connections to several mathematical
 fields including Lie groups, Lie algebras, Coxeter groups.

 The number $2$ is the smallest limit point of the spectral radius of
 connected graphs. 
Another important limit point is
 $\sqrt{2+\sqrt{5}}\approx 2.0582$. 
Smith and Hoffman \cite{HS, Hoffman} developed several important tools
to study the spectral radii of graphs.
Shearer \cite{shearer} proved that for any $\lambda\geq
\sqrt{2+\sqrt{5}}$ there exists a sequence of graphs $\{G_n\}$ such
that $\lim_{n\to\infty}\rho(G_n)=\lambda$.
Cvetkovi\'c et al.~\cite{CDG} gave a nearly complete description
of all graphs $G$ with $2 < \rho(G) < \sqrt{2 + \sqrt{5}}$. Their description
was completed by Brouwer and Neumaier \cite{BN}.
 Wang et al. \cite{wang} studied some graphs with spectral radii close to $\frac
{3}{2}{\sqrt{2}}$.
Woo-Neumaier \cite{WN} and Lan-Lu \cite{specdiam2} studied the
structures of the connected graphs $G$ with $\sqrt{
2 + \sqrt{5}}<\rho(G)< \frac{3}{2}{\sqrt{2}}$. 
A {\em  minimizer} graph, denoted by $G_{n,D}^{min}$, is a graph  which has
the minimal spectral radius among all connected graphs of order $n$
and diameter $D$. The problem of determining the minimizer graph is 
well-studied in the literature \cite{CDK, DK, specdiam,YSL}.

In this paper, we will generalize Smith's theorem to $r$-uniform
hypergraphs. An $r$-uniform hypergraph $H=(V,E)$ consists of a vertex
set $V$ and an edge-set $E\subseteq {V\choose r}$. There are roughly two
approaches to generalize the spectral theory to $r$-uniform
hypergraphs. The first approach is to generalize the Laplacian spectra based
on the $s$-th-order random walks (Rodr{\'i}guez \cite{rod1, rod1} for
$s=1$, Chung \cite{fan2} for $s=r-1$, and Lu-Peng \cite{rwalk, laphg}
for general $1\leq s\leq r-1$.) 
  The second approach is to generalize
the spectra of the adjacency matrices base on the Raileigh principle
of extremal eigenvalues (for example, Lim \cite{lim}, Qi \cite{qi05,
  qi06}, Cooper-Dutle \cite{CD}, Keevash-Lenz-Mubayi \cite{KLM}, and
Nikiforov \cite{nikiforov}, etc.)  Let's use the notion of
\cite{KLM, nikiforov}. Given a hypergraph $H$, the polynomial form $P_H({\bf
  x})\colon {\mathbb R}^n \to {\mathbb R}$ is defined for any vector
${\bf x}=(x_1,\ldots, x_n)\in  {\mathbb R}^n$ as
$$P_H({\bf x})=r!\sum_{\{i_1,\ldots, i_r\}}x_{i_1}\cdots x_{i_r}.$$
The spectral radius of $H$, denoted by $\rho(H)$, is defined to be the maximum value of the polynomial form over the $r$-norm unit
sphere:
$$\rho(H)=\max_{\| {\bf x} \|_r=1}P_H({\bf x}).$$
This definition lies in the common interest of \cite{lim, qi05, qi06, CD,
  KLM, nikiforov}. It is a natural generalization of the spectral
radius of graphs to hypergraphs. (Noticing in Cooper-Dutle's paper
\cite{CD}, it is off by a constant factor $(r-1)!$. This is not
essential and will not affect our classification.)

The number $2$ is the spectral radius of the infinite path. (To avoid
the definition of the spectral radius of an infinite graph, we really
mean that $2=\lim_{n\to\infty}\rho(A_n)$, where $A_n$ is the path with
$n$ edges.) For $r\geq 2$,
let $\rho_r:=(r-1)!\sqrt[r]{4}$. It turns out that
$\rho_r=\lim_{n\to\infty}\rho(A_n^{(r)})$, where $A^{(r)}_n$ is the
$r$-uniform simple path with $n$ edges. (Here ``simple'' means that each
pair of edges  can only intersect at most one vertex.) In this paper,
we classified all $r$-uniform hypergraphs with spectral radius at most
$\rho_r$: Theorem \ref{t2} and \ref{t1} classify all $3$-uniform
hypergraphs with spectral radius equal to $\rho_3$  and less than $\rho_3$ ;
Theorem \ref{t4} and \ref{t5} classify all $r$-uniform
hypergraphs with spectral radius less than $\rho_r$ and equal to
$\rho_r$ for all $r\geq 4$.
These are the most natural generalization of Smith's theorem
into $r$-uniform hypergraphs.

Our method is different from the method used in Smith's original proof. We actually
discovered an easy way to compute the spectral radius using weighted
incident matrix. Our method naturally applies to the case $r=2$. Thus, we
give another proof for Smith's theorem.

The paper is organized as follows. In Section 2, we introduce the notation
and proved several important lemmas for computing the spectral radius.
In Section 3, we classify all connected 3-uniform hypergraphs with the spectral
radius at most $\rho_3=2\sqrt[3]{4}$. In Section 4, we introduce the
methods of reduction and extension and use them to
classify all connected $r$-uniform hypergraphs with the spectral radius
at most $\rho_r=(r-1)!\sqrt[r]{4}$.

\section{Notation and Lemmas}
An $r$-uniform hypergraph $H$ is a pair $(V,E)$ where $V$ is the set of
vertices and $E\subset {V\choose r}$ is the set of edges.
The degree of vertex $v$, denoted by $d_v$, is the number of edges incident to $v$. If
$d_v=1$, we say $v$ is a leaf vertex.
A {\em walk} on hypergraph $H$ is a sequence of vertices and edges:
$v_0e_1v_1e_2\ldots v_l$ satisfying that both $v_{i-1}$ and $v_i$
are incident to $e_i$ for $1\leq i \leq l$. The vertices $v_0$ and
$v_l$ are called the ends of the walk. The length of a walk is the
number of edges on the walk.
A walk is called a {\em path} if
all vertices and edges on the walk are distinct.
The walk is {\em closed}
if $v_l=v_0$. A closed walk is called a {\em cycle} if all vertices and
edges in the walk are distinct. A hypergraph $H$ is called {\em
  connected} if for any pair of vertex $(u,v)$, there is a path
connecting $u$ and $v$. A hypergraph $H$ is called a {\em hypertree} if it is
connected, and acyclic.
 A hypergraph
$H$ is called {\em simple} if every pair of edges intersects at most
one vertex.  In fact, any non-simple hypergraph
contains at least a $2$-cycle: $v_1F_1v_2F_2v_1$, i.e., $v_1,v_2\in F_1\cap
F_2$. A hypertree is always simple.

Now we review the spectral analysis for hypergraphs using the
approach of the polynomial form.
\begin{definition}\cite{SSL, CD, KLM, nikiforov}
Given an $r$-uniform hypergraph $H$, the polynomial form of $H$ is a function $P_{H}(\mathbf{x}):\mathbb{R}^{n}\rightarrow \mathbb{R}$ defined
for any vector ${\bf x}:=(x_{1},...,x_{n})\in R^{n}$ as
$$P_{H}({\bf x})=r!\sum_{\{i_1, i_2,\cdots, i_r\}\in E(H)} x_{i_1}x_{i_2}\cdots
x_{i_r}.$$
For any $p\geq 1$, the largest $p$-eigenvalue of $H$ is defined as
$$\lambda_p(H)=\max_{|\mathbf{x}|_{p}=1}P_{H}(x).$$
\end{definition}

In this paper,
we define the spectral radius of an $r$-uniform hypergraph $H$ to be
$\rho(H)=\lambda_r(H)$. Equivalently, we have
\begin{equation}
  \label{eq:1}
\rho(H)=r!\max_{\stackrel{{\bf x}\in {\mathbb R}^n_{\geq 0}}{{\bf x}
        \not=0}}
\frac{\sum_{\{i_1, i_2,\cdots, i_r\}\in E(H)} x_{i_1}x_{i_2}\cdots
x_{i_r }}{\sum_{i=1}^n x_i^r}.
\end{equation}
Here ${\mathbb R}^n_{\geq 0}$ denote the closed
orthant in ${\mathbb R}^n$ while 
${\mathbb R}^n_{> 0}$ denote the open orthant.
The fraction in Equation \eqref{eq:1} is called the {\em Raileigh quotient}.
A non-zero vector $\bf x$ maximizing the Raileigh quotient is called
an eigenvector corresponding to $\rho(H)$. If $\bf x$ is an
eignenvector, so is $c\bf x$ for any scale $c>0$.
If an eigenvector $\bf x$ has all positive entries, i.e., ${\bf x}\in
{\mathbb R}^n_{> 0}$,
then $\bf x$ is called a Perron-Frobenius vector for $H$.

\begin{lemma}\label{PF}
\cite{CD,KLM, nikiforov}
If $H$ is a connected $r$-uniform hypergraph, then the
Perron-Frobenius vector exists for $H$.
\end{lemma}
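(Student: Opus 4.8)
The plan is to establish two things: that the maximum defining $\rho(H)$ in \eqref{eq:1} is actually attained by some nonnegative vector, and that any such maximizer must in fact have all coordinates strictly positive. The first part is a routine compactness argument: since the Rayleigh quotient is invariant under positive scaling, I can restrict attention to the compact set $\{\mathbf{x}\in\mathbb{R}^n_{\geq 0}: \|\mathbf{x}\|_r=1\}$, on which the numerator $P_H(\mathbf{x})$ is a continuous function and the denominator equals $1$; hence the supremum is achieved at some $\mathbf{x}^*\ge 0$ with $\|\mathbf{x}^*\|_r=1$ and $P_H(\mathbf{x}^*)=\rho(H)$. Choosing the uniform positive vector shows $\rho(H)>0$ whenever $H$ has an edge, which rules out the trivial maximizer.

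The heart of the argument, and the place where connectivity enters, is showing $\mathbf{x}^*>0$. I would argue by contradiction: suppose the zero set $S=\{i: x^*_i=0\}$ is nonempty, and let $T=V\setminus S$, which is nonempty because $\mathbf{x}^*\ne 0$. If no edge met both $S$ and $T$, then every edge would lie entirely inside $S$ or entirely inside $T$, and no walk could join the two parts, contradicting connectedness; so there is an edge $e$ with $e\cap S\ne\emptyset$ and $e\cap T\ne\emptyset$. Write $k=|e\cap S|$ and note the crucial inequality $1\le k\le r-1$, since $e$ has a vertex in each part.

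Now I would perturb $\mathbf{x}^*$ by raising exactly the $k$ zero-coordinates of $e$ to a small value $\epsilon>0$, leaving all other coordinates fixed, and track the Rayleigh quotient. Every edge whose product changes contains a vertex of $e\cap S$, hence originally contributed $0$; each such edge now contributes a nonnegative amount, and the edge $e$ itself contributes $r!\,\epsilon^{k}\prod_{j\in e\cap T}x^*_j$, which is strictly positive and of order $\epsilon^{k}$. Thus the numerator increases by at least $c\,\epsilon^{k}$ for some constant $c>0$, while the denominator increases by exactly $k\,\epsilon^{r}$. Since $k\le r-1<r$, the gain $\epsilon^{k}$ dominates the loss $\epsilon^{r}$ as $\epsilon\to 0^{+}$, so the Rayleigh quotient strictly exceeds $\rho(H)$ for small $\epsilon$. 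This contradicts the maximality of $\mathbf{x}^*$, forcing $S=\emptyset$ and $\mathbf{x}^*>0$, i.e.\ $\mathbf{x}^*$ is a Perron--Frobenius vector.

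I expect the main obstacle to be the positivity step, and within it the precise bookkeeping of the perturbation: one must confirm that no edge touched by $e\cap S$ can decrease the numerator (this uses that all such edges start at value $0$) and that the decisive comparison is simply between the exponents $k$ and $r$. The single inequality $k<r$, which holds precisely because the crossing edge retains a positive vertex in $T$, is what makes the lowest-order term of the numerator dominate the lowest-order term of the denominator, and it is exactly here that connectivity is indispensable.
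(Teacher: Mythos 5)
Your proof is correct. Note that the paper itself gives no argument for Lemma \ref{PF} --- it is quoted from \cite{CD,KLM,nikiforov} --- and your compactness-plus-perturbation argument is essentially the standard proof found in those references. The two key points are handled properly: connectivity forces a crossing edge $e$ with $1\le k=|e\cap S|\le r-1$, and the scale-invariance of the Rayleigh quotient lets you compare $\frac{\rho(H)+c\epsilon^k}{1+k\epsilon^r}$ with $\rho(H)$ directly, where the strict inequality for small $\epsilon$ follows precisely from $k<r$. The only bookkeeping worth making explicit is that every edge whose product is altered by the perturbation contains a vertex of $e\cap S$ and hence contributed $0$ beforehand, so no cross-term can decrease the numerator --- which you state correctly.
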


By the Lagrange multiplier method, the Perron-Frobenius vector ${\bf x}$ satisfies
for any vertex $v$
\begin{equation}
  \label{eq:lambda}
(r-1)!\sum_{\{v, i_2,\cdots, i_r\}\in E(H)} x_{i_2}\cdots
x_{i_r}=\rho(H)  x_v^{r-1}.
\end{equation}

We have the following important lemma as a corollary of Lemma \ref{PF}.
\begin{lemma}
\cite{CD, KLM, nikiforov}
\label{subgraph}
If $G$ is a connected $r$-uniform hypergraph, and $H$ is a proper
subgraph of $G$, then
$$\rho(H)<\rho(G).$$
\end{lemma}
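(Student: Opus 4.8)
The plan is to establish the non-strict bound $\rho(H)\le\rho(G)$ by using an eigenvector of $H$ as a test vector for $G$, and then to upgrade it to a strict inequality through an explicit perturbation that exploits the connectivity of $G$. First I would reduce to the case that $H$ is connected: if $H$ is disconnected, replace it by the connected component $C$ of largest spectral radius, so that $\rho(H)=\rho(C)$ while $C$ is again a connected proper subgraph of $G$. By Lemma \ref{PF}, $C$ (now renamed $H$) carries a Perron–Frobenius vector $\mathbf y$ that is strictly positive on $V(H)$; I extend it by zeros to all of $V(G)$ and scale so that $\|\mathbf y\|_r=1$. Splitting the edges of $G$ into those lying in $E(H)$ and the rest, Equation \eqref{eq:1} gives $\rho(G)\ge P_G(\mathbf y)=\rho(H)+r!\sum_{e\in E(G)\setminus E(H)}\prod_{i\in e}y_i\ge\rho(H)$, which is the easy direction.

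For strictness I would distinguish two cases according to how $H$ fails to equal $G$. If some edge $e\in E(G)\setminus E(H)$ satisfies $e\subseteq V(H)$, then $\prod_{i\in e}y_i>0$ since $\mathbf y$ is strictly positive on $V(H)$, so the extra sum above is already positive and $\rho(G)\ge P_G(\mathbf y)>\rho(H)$. Otherwise every edge of $G$ contained in $V(H)$ already belongs to $E(H)$; because $H\ne G$ this forces $E(H)=\{e\in E(G):e\subseteq V(H)\}$ with $V(H)\subsetneq V(G)$, and the connectivity of $G$ then produces a \emph{straddling} edge $e$ with $e\cap V(H)\ne\emptyset$ and $k:=|e\setminus V(H)|\in\{1,\dots,r-1\}$ (trace a path from a vertex of $V(H)$ to one outside it and take the first edge that leaves $V(H)$).

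The decisive step is then a multi-coordinate perturbation. Writing $e\setminus V(H)=\{w_1,\dots,w_k\}$, I would test $G$ against $\mathbf x=\mathbf y+\epsilon\sum_{j=1}^{k}\mathbf e_{w_j}$ for small $\epsilon>0$. Since all entries of $\mathbf x$ remain nonnegative, every monomial of $P_G$ is nonnegative, so $P_G(\mathbf x)\ge\rho(H)+c\,\epsilon^{k}$, where $c=r!\prod_{a\in e\cap V(H)}y_a>0$ is the contribution of the single edge $e$, while the denominator grows only to $\|\mathbf x\|_r^r=1+k\epsilon^{r}$. Hence the Rayleigh quotient satisfies $P_G(\mathbf x)/\|\mathbf x\|_r^{r}\ge(\rho(H)+c\,\epsilon^{k})/(1+k\epsilon^{r})>\rho(H)$ for all sufficiently small $\epsilon$, and therefore $\rho(G)>\rho(H)$.

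The place where the hypergraph case genuinely diverges from the graph case — and the main obstacle — is exactly here: at a zero vertex $w$ every edge through $w$ contains a \emph{second} zero vertex, so a single-coordinate perturbation produces no first-order gain, and one is forced to switch on all $k\le r-1$ outside vertices of one edge at once. The inequality $k<r$ is precisely what makes the numerator's gain of order $\epsilon^{k}$ dominate the denominator's cost of order $\epsilon^{r}$; this quantitative comparison is the crux of the argument, and it specializes correctly to the classical first-order perturbation ($k=1<2=r$) when $r=2$, which is why the same method reproves Smith's theorem.
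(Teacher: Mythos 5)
Your proof is correct. Note first that the paper itself does not prove Lemma \ref{subgraph}: it is quoted from \cite{CD, KLM, nikiforov} and presented as a corollary of the Perron--Frobenius lemma, so there is no in-paper argument to compare against. Your write-up supplies a complete, self-contained proof in the spirit of those references: the non-strict inequality by extending the Perron--Frobenius vector of (a component of) $H$ by zeros and testing it in \eqref{eq:1}, and strictness either from an extra edge inside $V(H)$ or, via connectivity of $G$, from a straddling edge handled by the multi-coordinate perturbation $\mathbf x=\mathbf y+\epsilon\sum_j\mathbf e_{w_j}$. The quantitative heart of the argument is sound: the numerator gains $c\,\epsilon^{k}$ with $c=r!\prod_{a\in e\cap V(H)}y_a>0$ while the denominator pays only $k\epsilon^{r}$, and $k\le r-1<r$ forces the Rayleigh quotient above $\rho(H)$ for small $\epsilon$. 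The reduction to connected $H$ (using $\rho(H)=\max_i\rho(C_i)$ over components, which follows from the decomposition of the Rayleigh quotient), the deduction that failure of the first case forces $V(H)\subsetneq V(G)$, and the existence of the straddling edge are all correct. The only quibble is with the closing commentary, not the proof: the assertion that at a zero vertex \emph{every} incident edge contains a second zero vertex is false when $k=1$ (there a single-coordinate perturbation already yields a first-order gain); the correct and relevant point, which your proof does use, is merely that $k$ can be as large as $r-1$, so one must switch on all outside vertices of one edge simultaneously and then compare $\epsilon^{k}$ against $\epsilon^{r}$.
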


\begin{definition}
A weighted incidence matrix $B$ of a hypergraph $H$ is a $|V|\times |E|$
matrix such that for any vertex $v$ and any edge $e$,  the entry
$B(v,e)>0$ if $v\in e$ and $B(v,e)=0$ if $v\not\in e$.
\end{definition}

\begin{definition}\label{anormal}
A hypergraph $H$ is called $\alpha$-normal if there exists a weighted
incidence matrix $B$ satisfying
\begin{enumerate}
\item $\sum_{e\colon v\in e}B(v,e)=1$, for any  $v\in V(H)$.
\item $\prod_{v\in e}B(v,e)=\alpha$,  for any $e\in E(H)$.
\end{enumerate}
Moreover, the incidence matrix $B$ is called {\em consistent}
if for any cycle $v_0e_1v_1e_2\ldots v_l$ ($v_l=v_0$)
$$\prod_{i=1}^l\frac{B(v_{i},e_i)}{B(v_{i-1}, e_i)}=1.$$
In this case, we call $H$ {\em consistently $\alpha$-normal}.
 \end{definition}

 \begin{example}
   Consider the cycle $C_n$. We can define  $B(v,e)=\frac{1}{2}$ for
   any $v\in e$. So $C_n$ is consistently $\frac{1}{4}$-normal.
 \end{example}

When $H$ is a hypertree, any incidence matrix $B$ of $H$ is automatically
consistent. Here are some examples of $\frac{1}{4}$-normal $2$-graphs.

\begin{example} The following graphs: $\tilde D_n$, $\tilde E_6$,
  $\tilde E_7$, and
$\tilde E_8$,  are all $\frac{1}{4}$-normal. We can show this
by labeling the value $B(v,e)$ at vertex $v$ near the side of edge $e$. If $v$ is a leaf vertex, then it has the trivial value 1,
and we will omit its labeling.

\begin{center}
\begin{tikzpicture}[thick, scale=0.8, bnode/.style={circle, draw,
    fill=black!50, inner sep=0pt, minimum width=4pt}, enode/.style={red}, line width=0.5pt]
\foreach \x in {0,1,...,3}
    {
    \path[fill=black!100]  (\x,0) node [bnode] {} -- (\x+1,0) node [bnode] {}--cycle;
    \draw[black] (\x,0) -- (\x+1,0);
}
\foreach \x in {2}
    {
    \path[fill=red]  (\x,0) node [bnode] {} -- (\x,1) node [bnode] {} -- (\x,2) node [bnode] {}--cycle;
    \draw[black] (\x,0) -- (\x,1)-- (\x,2);
}
\draw (2,-1) node [color=black] {$\widetilde{E}_6$};
\draw (0.75,-0.3) node [enode] {$\frac{1}{4}$};
\draw (1.25,-0.3) node [enode] {$\frac{3}{4}$};
\draw (1.75,-0.3) node [enode] {$\frac{1}{3}$};
\draw (2.25,-0.3) node [enode] {$\frac{1}{3}$};
\draw (2.75,-0.3) node [enode] {$\frac{3}{4}$};
\draw (3.25,-0.3) node [enode] {$\frac{1}{4}$};
\draw (1.85,0.45) node [enode] {$\frac{1}{3}$};
\draw (2.25,0.75) node [enode] {$\frac{3}{4}$};
\draw (1.75,1.25) node [enode] {$\frac{1}{4}$};
\end{tikzpicture}
\hfil
\begin{tikzpicture}[thick, scale=0.8, bnode/.style={circle, draw,
    fill=black!50, inner sep=0pt, minimum width=4pt}, enode/.style={red}, line width=0.5pt]
\foreach \x in {0,1,...,5}
    {
    \path[fill=black!100]  (\x,0) node [bnode] {} -- (\x+1,0) node [bnode] {}--cycle;
    \draw[black] (\x,0) -- (\x+1,0);
}
\foreach \x in {3}
    {
    \path[]  (\x,0) node [bnode] {} -- (\x,1) node [bnode] {} --cycle;
    \draw[black] (\x,0) -- (\x,1);
}
\draw (3,-1) node [color=black] {$\widetilde{E}_7$};
\draw (0.75,-0.3) node [enode] {$\frac{1}{4}$};
\draw (1.25,-0.3) node [enode] {$\frac{3}{4}$};
\draw (1.75,-0.3) node [enode] {$\frac{1}{3}$};
\draw (2.25,-0.3) node [enode] {$\frac{2}{3}$};
\draw (2.75,-0.3) node [enode] {$\frac{3}{8}$};
\draw (2.85,0.45) node [enode] {$\frac{1}{4}$};
\draw (5.25,-0.3) node [enode] {$\frac{1}{4}$};
\draw (4.75,-0.3) node [enode] {$\frac{3}{4}$};
\draw (4.25,-0.3) node [enode] {$\frac{1}{3}$};
\draw (3.75,-0.3) node [enode] {$\frac{2}{3}$};
\draw (3.25,-0.3) node [enode] {$\frac{3}{8}$};

\end{tikzpicture}\\

\begin{tikzpicture}[thick, scale=0.8, bnode/.style={circle, draw,
    fill=black!50, inner sep=0pt, minimum width=4pt}, enode/.style={red}, line width=0.5pt]
\foreach \x in {0,1,...,6}
    {
    \path[fill=black!100]  (\x,0) node [bnode] {} -- (\x+1,0) node [bnode] {}--cycle;
    \draw[black] (\x,0) -- (\x+1,0);
}
\foreach \x in {2}
    {
    \path[]  (\x,0) node [bnode] {} -- (\x,1) node [bnode] {} --cycle;
    \draw[black] (\x,0) -- (\x,1);
}
\draw (3,-1) node [color=black] {$\widetilde{E}_8$};

\draw (1.85,0.45) node [enode] {$\frac{1}{4}$};
\draw (1.25,-0.3) node [enode] {$\frac{3}{4}$};
\draw (2.25,-0.3) node [enode] {$\frac{5}{12}$};
\draw (3.25,-0.3) node [enode] {$\frac{2}{5}$};
\draw (4.25,-0.3) node [enode] {$\frac{3}{8}$};
\draw (5.25,-0.3) node [enode] {$\frac{1}{3}$};
\draw (6.25,-0.3) node [enode] {$\frac{1}{4}$};
\draw (0.75,-0.3) node [enode] {$\frac{1}{4}$};
\draw (1.75,-0.3) node [enode] {$\frac{1}{3}$};
\draw (2.75,-0.3) node [enode] {$\frac{3}{5}$};
\draw (3.75,-0.3) node [enode] {$\frac{5}{8}$};
\draw (4.75,-0.3) node [enode] {$\frac{2}{3}$};
\draw (5.75,-0.3) node [enode] {$\frac{3}{4}$};
\end{tikzpicture}
\hfil
\begin{tikzpicture}[thick, scale=0.8, bnode/.style={circle, draw,
    fill=black!50, inner sep=0pt, minimum width=4pt}, enode/.style={color=red},line width=0.5pt]
\foreach \x in {0,1,...,2}
    {
    \path[fill=gray]  (\x,0) node [bnode] {} -- (\x+1,0) node [bnode] {} --cycle;
    \draw[black] (\x,0) -- (\x+1,0);
}
\draw (0.55,-0.3) node [enode] {$\frac{1}{4}$};
\draw (1.25,-0.3) node [enode] {$\frac{1}{2}$};
\draw (2.25,-0.3) node [enode] {$\frac{1}{2}$};
\draw (1.75,-0.3) node [enode] {$\frac{1}{2}$};
\draw (2.75,-0.3) node [enode] {$\frac{1}{2}$};
\draw (3.5,0) node [color=black] {$\cdots$};
\foreach \x in {4,5}
    {
    \path[fill=gray]  (\x,0) node [bnode] {} -- (\x+1,0) node [bnode] {} --cycle;
    \draw[black] (\x,0) -- (\x+1,0);
}
\foreach \x in {1,5}
    {
    \path[fill=gray]  (\x,0) -- (\x,1) node [bnode] {}  --cycle;
    \draw[black] (\x,0) -- (\x,1);
}
\draw (0.75,0.45) node [enode] {$\frac{1}{4}$};
\draw (2,-1) node [color=black] {$\tilde D_n$};
\draw (5.25,-0.3) node [enode] {$\frac{1}{4}$};
\draw (4.25,-0.3) node [enode] {$\frac{1}{2}$};
\draw (4.75,-0.3) node [enode] {$\frac{1}{2}$};
\draw (4.85,0.45) node [enode] {$\frac{1}{4}$};
\end{tikzpicture}
\end{center}
\end{example}

We observe that all connected graphs with spectral radius $2$ are
consistently $\frac{1}{4}$-normal. The relation between the
consistent $\alpha$-normal labelling and the spectral radius is
characterized by the following Lemma.

\begin{lemma}\label{l:main}
 Let $H$ be a connected $r$-uniform hypergraph.
 Then the spectral radius of $H$ is
 $\rho(H)$ if and only if  $H$ is consistently $\alpha$-normal with $\alpha=((r-1)!/\rho(H))^r$.
\end{lemma}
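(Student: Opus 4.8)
The plan is to prove both directions at once by exhibiting an explicit dictionary between a Perron--Frobenius eigenvector $\mathbf{x}$ of $H$ and a weighted incidence matrix $B$. Writing $P_e:=\prod_{u\in e}x_u$, the dictionary I will use is
$$B(v,e)=\frac{(r-1)!\,P_e}{\rho(H)\,x_v^{r}}=\frac{(r-1)!\prod_{u\in e,\,u\neq v}x_u}{\rho(H)\,x_v^{r-1}},\qquad v\in e.$$
Everything then reduces to checking that the three defining features of a consistent $\alpha$-normal matrix correspond, term by term, to the eigenvalue equation \eqref{eq:lambda}, to the homogeneity of the edge product, and to the triviality of the edge-ratios around a cycle.

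For the forward direction I would assume $\rho(H)$ is the spectral radius and take the positive Perron--Frobenius vector $\mathbf{x}\in\mathbb{R}^n_{>0}$ guaranteed by Lemma \ref{PF}. Defining $B$ by the displayed formula, the normalization condition $\sum_{e\colon v\in e}B(v,e)=1$ is precisely \eqref{eq:lambda} divided by $\rho(H)x_v^{r-1}$. For the product condition I would compute $\prod_{v\in e}B(v,e)$: in the numerator each factor $x_u$ is omitted exactly once, contributing $((r-1)!)^rP_e^{r-1}$, while the denominator contributes $\rho(H)^rP_e^{r-1}$, so the ratio collapses to $\alpha=((r-1)!/\rho(H))^r$. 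Finally, since $B(v,e)/B(v',e)=x_{v'}^{r}/x_v^{r}$ for $v,v'\in e$, the cycle product $\prod_i B(v_i,e_i)/B(v_{i-1},e_i)=\prod_i x_{v_{i-1}}^r/x_{v_i}^r$ telescopes to $x_{v_0}^r/x_{v_l}^r=1$, so consistency comes for free.

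For the converse I would assume $B$ is consistently $\alpha$-normal with $\alpha=((r-1)!/\rho(H))^r$ and reverse the dictionary to manufacture a positive eigenvector. Fixing a root $v_0$ with $x_{v_0}=1$, I declare for any vertex $w$ reached by a walk $v_0e_1\ldots v_l=w$ the value $x_w=\prod_{i=1}^l\bigl(B(v_{i-1},e_i)/B(v_i,e_i)\bigr)^{1/r}$; consistency makes this independent of the walk and connectedness makes it defined on all of $V$, with every factor positive so that $\mathbf{x}\in\mathbb{R}^n_{>0}$. By construction $B(v,e)x_v^r$ is constant over $v\in e$; calling this common value $c_e$, the product condition forces $c_e^r/P_e^r=\alpha$, hence $c_e=(r-1)!\,P_e/\rho(H)$, which is exactly the dictionary formula for $B$. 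Substituting into the normalization condition and clearing denominators reproduces \eqref{eq:lambda} verbatim, so $\mathbf{x}$ is a positive solution of the eigenvalue equation with value $\rho(H)$; by the Perron--Frobenius theory underlying Lemma \ref{PF}, a positive eigenvector can belong only to the spectral radius, so $\rho(H)$ is indeed the spectral radius of $H$.

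I expect the only genuine obstacle to be the reconstruction step in the converse, namely verifying that the vertex weights $x_w$ are well defined. This is exactly where consistency is indispensable: without it the product of ratios around a cycle need not be $1$, and no vertex potential $\mathbf{x}$ exists. One small point to dispatch along the way is that consistency is hypothesized only for genuine cycles, so I must check it propagates to arbitrary closed walks; this follows because a backtrack along a repeated edge contributes $\bigl(B(v_{i-1},e)/B(v_i,e)\bigr)\bigl(B(v_i,e)/B(v_{i-1},e)\bigr)=1$, and every closed walk decomposes into cycles and backtracks. The remaining verifications are routine algebraic identities, and the closing appeal to the uniqueness of the positive Perron--Frobenius eigenvalue is what upgrades ``$\mathbf{x}$ solves \eqref{eq:lambda} with value $\rho(H)$'' to ``$\rho(H)$ is the spectral radius.''
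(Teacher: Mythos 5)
Your forward direction is exactly the paper's argument: the same weighted incidence matrix $B(v,e)=(r-1)!\prod_{u\in e}x_u/(\rho(H)x_v^r)$, the same verification of the edge-product and normalization conditions via \eqref{eq:lambda}, and the same telescoping check of consistency. Your converse, however, takes a genuinely different route. The paper never produces an eigenvector directly from $B$; instead it applies the weighted AM--GM inequality edgewise to show that $\alpha$-normality alone forces $\rho(H)\le (r-1)!\alpha^{-1/r}$ (inequality \eqref{eq:2}), and then uses consistency only to build the vector $x^*$ that attains equality in every application of AM--GM (the system \eqref{eq:3}), giving the matching lower bound. You instead build the same vector $x^*$, observe that $B(v,e)x_v^r$ is constant on each edge, recover the dictionary formula for $B$, and conclude that $x^*$ satisfies \eqref{eq:lambda} with eigenvalue $(r-1)!\alpha^{-1/r}$ --- and then you need the assertion that a \emph{positive} solution of \eqref{eq:lambda} can only occur at the spectral radius. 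That assertion is true for connected hypergraphs (it is the uniqueness half of the Perron--Frobenius theorem for weakly irreducible nonnegative tensors, due to Chang--Pearson--Zhang and successors), but it is strictly more than what Lemma \ref{PF} as stated provides, which is only existence; \eqref{eq:lambda} by itself is just the Lagrange condition for a critical point, not for the maximum. So your proof is correct modulo importing that external uniqueness theorem, whereas the paper's AM--GM argument is self-contained and, as a bonus, yields Lemmas \ref{subnormal} and \ref{supernormal} as immediate one-sided variants --- something your eigenvector route does not give. Your remark that consistency on cycles propagates to arbitrary closed walks (backtracks cancel) is a point the paper glosses over and is worth keeping.
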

\begin{proof} We first show that it is necessary.
Let $x:=(x_{1},...,x_{n})$ be the Perron-Frobenis eigenvector of $H$.
Define the weighted incidence matrix $B$ as follows:
\[
B(v,e)=
\begin{cases}
  \frac{(r-1)!\prod_{u\in e}x_u}{\rho(H) x_v^r}
 & \mbox{ if } v\in e\\
 0 & \mbox{ otherwise.}
\end{cases}
\]
From this definition,  for any edge $e$, we have
$$\prod_{v\in e}B(v,e)=\prod_{v\in
  e}\frac{(r-1)!\prod_{u\in e}x_u}{\rho(H) x_v^r}=
\left(
  \frac{(r-1)!}{\rho(H)}\right)^r=\alpha.$$
Item 2 of Definition \ref{anormal} is verified. Now we check item 1:  for any $v$,
$\sum_{e}B(v,e)=1$.



Recall that the Perron-Fronbenis eigenvector $\bf x$ satisfies
Equation \eqref{eq:lambda}.
 For any $v\in V$, we have
$$  \sum_{e}B(v,e) =
\sum_{\{v, i_2,\cdots, i_r\}\in E(H)}  \frac{(r-1)!\prod_{u\in e}x_u}{\rho(H) x_v^r}
=\frac{\rho(H)}{\rho(H)}
=1.$$

 To show that $B$ is consistent,
for any cycle $v_0e_1v_1e_2\ldots v_l$ ($v_l=v_0$), we have
$$\prod_{i=1}^l\frac{B(v_{i},e_i)}{B(v_{i-1}, e_i)}=\prod_{i=1}^l
\frac{x_{v_{i-1}}^r}{x_{v_i}^r}=1.$$
Now we show that it is also sufficient.
Assume that $B$ is a consistently $\alpha$-normal weighted incident
matrix. For any non-zero vector ${\bf x}:=(x_1,x_2,\ldots, x_n)\in \mathbb{R}^n_{\geq 0}$,
we have
\begin{align}\label{eq:2}
\notag
r!\sum_{\{x_{v_{1}},x_{v_{2}},\ldots,x_{v_{r}}\}\in
  E(H)}x_{v_{1}}x_{v_{2}}\cdots x_{v_{r}}
 &= \frac{r!}{\alpha^{\frac{1}{r}}}\sum_{e\in E(H)}\prod_{v\in e}(B^{\frac{1}{r}}(v,e)x_{v})\\
\notag
 &\leq \frac{r!}{\alpha^{\frac{1}{r}}}\sum_{e\in E(H)}\frac{\sum_{v\in e}(B(v,e)x_{v}^{r})}{r}\\
 &= \frac{(r-1)!}{\alpha^{\frac{1}{r}}}\|x\|_{r}^r.
\end{align}
This inequality implies $\rho(H)\leq
\frac{(r-1)!}{\alpha^{\frac{1}{r}}}$.

The equality holds if $H$ is $\alpha$-normal and
there is a non-zero solution $\{x_i\}$ for the
system of the following
homogeneous linear equations:
\begin{align}\label{eq:3}
B(v_{i_{1}},e)^{1/r}\cdot x_{i_{1}}=B(v_{i_{2}},e)^{1/r}\cdot
x_{i_{2}}=\cdots =B(v_{i_{r}},e)^{1/r}\cdot x_{i_{r}},
\forall  e=\{x_{i_{1}},x_{i_{2}}, \ldots, x_{i_{r}}\}\in E(H).
\end{align}
Picking any vertex $v_0$ and setting $x^*_{v_0}=1$, define
$x^*_{u}=\left(\prod_{i=1}^l\frac{B(v_{i-1},e_i)}{B(v_{i},e_i)}\right)^{1/r}$
if there is a path $v_0e_1v_1e_2\cdots v_l(=u)$ connecting
$v_0$ and $u$. Since $H$ is connected, such path must exist.
The consistent condition guarantees that $x^*_u$ is
independent of the choice of the path. It is easy to check that
$(x_1^*,\ldots, x_n^*)$ is a solution of \eqref{eq:3}.
Thus, $\rho(H)=\frac{(r-1)!}{\alpha^{\frac{1}{r}}}$.
\end{proof}

\begin{remark}
If $H$ is a simple hypertree, then the ``consistent'' condition is
automatically satisfied.  In general the condition ``$H$ is
$\alpha$-normal'' doesn't
imply $\rho(H)= (r-1)!\alpha^{-\frac{1}{r}}$.
Consider the following example $H=C_3$.\\
$$\begin{tikzpicture}[thick, scale=1.3, bnode/.style={circle, draw,
    fill=black!50, inner sep=0pt, minimum width=4pt}, enode/.style={}]

\path[draw=black]  (0,0) node [bnode] {} -- (0.5,0.866) node
[bnode] {} --(1,0) node [bnode] {} -- (0,0);

\draw (1,0) node [bnode] {};
\draw (0.15,-0.2) node [enode] {$x$};
\draw (0.85,-0.2) node [enode] {$1-x$};
\draw (1.05,0.25) node [enode] {$x$};
\draw (0.9,0.75) node [enode] {$1-x$};
\draw (0.1,0.75) node [enode] {$x$};
\draw (-0.35,0.25) node [enode] {$1-x$};
\end{tikzpicture}$$

For any $x\in (0,1)$,
$C_{3}$ is $x(1-x)$-normal, but inconsistent unless
$x=\frac{1}{2}$. As the consequence,  $\rho(H)=2\leq
[x(1-x)]^{-\frac{1}{2}}$.
\end{remark}

Often we need compare the spectral radius with a particular value.
It is convenient to introduce the following concepts.
 \begin{definition}
   A hypergraph $H$ is called $\alpha$-subnormal if there exists a weighted
incidence matrix $B$ satisfying
\begin{enumerate}
\item $\sum_{e\colon v\in e}B(v,e)\leq 1$, for any  $v\in V(H)$.
\item $\prod_{v\in e}B(v,e)\geq \alpha$,  for any $e\in E(H)$.
\end{enumerate}
Moreover, $H$ is called {\em strictly  $\alpha$-subnormal} if
it is $\alpha$-subnormal but not $\alpha$-normal.
 \end{definition}
We have the following lemma.
\begin{lemma} \label{subnormal}
Let $H$ be an $r$-uniform hypergraph. If $H$ is $\alpha$-subnormal, then
the spectral radius of $H$ satisfies
\begin{equation*}
 \rho(H)\leq (r-1)!\alpha^{-\frac{1}{r}}.
\end{equation*}
Moreover, if $H$ is strictly $\alpha$-subnormal
then $\rho(H)< (r-1)!\alpha^{-\frac{1}{r}}$.
\end{lemma}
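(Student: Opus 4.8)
The plan is to imitate the sufficiency half of the proof of Lemma \ref{l:main}, replacing the two defining \emph{equalities} of $\alpha$-normality by the two \emph{inequalities} of $\alpha$-subnormality and checking that each relaxation bends the estimate \eqref{eq:2} in the favorable direction. Fix a weighted incidence matrix $B$ witnessing that $H$ is $\alpha$-subnormal, and let ${\bf x}\in\mathbb{R}^n_{\ge 0}$ be arbitrary. For each edge $e$ I would write $\prod_{v\in e}x_v=\left(\prod_{v\in e}B(v,e)\right)^{-1/r}\prod_{v\in e}\bigl(B(v,e)^{1/r}x_v\bigr)$; since $\prod_{v\in e}B(v,e)\ge\alpha$ and all factors $B(v,e)^{1/r}x_v$ are nonnegative, this is at most $\alpha^{-1/r}\prod_{v\in e}\bigl(B(v,e)^{1/r}x_v\bigr)$. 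Thus the opening equality of \eqref{eq:2} becomes ``$\le$'', while the AM--GM step $\prod_{v\in e}\bigl(B(v,e)^{1/r}x_v\bigr)\le\frac1r\sum_{v\in e}B(v,e)x_v^r$ is unchanged.

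Next I would regroup the double sum by vertices, $\sum_{e}\sum_{v\in e}B(v,e)x_v^r=\sum_{v}x_v^r\sum_{e\colon v\in e}B(v,e)$, and invoke the degree condition $\sum_{e\colon v\in e}B(v,e)\le 1$ to bound it by $\sum_v x_v^r=\|{\bf x}\|_r^r$. Chaining the three estimates gives $r!\sum_{e}\prod_{v\in e}x_v\le (r-1)!\,\alpha^{-1/r}\|{\bf x}\|_r^r$ for every nonnegative $\bf x$, and maximizing the Rayleigh quotient \eqref{eq:1} over the unit sphere yields $\rho(H)\le (r-1)!\,\alpha^{-1/r}$, which is the first assertion.

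For the strict statement I would argue by contradiction: suppose $\rho(H)=(r-1)!\,\alpha^{-1/r}$. Taking ${\bf x}$ to be the Perron--Frobenius eigenvector furnished by Lemma \ref{PF}, the Rayleigh quotient attains $\rho(H)$, so the entire chain of inequalities above collapses to equalities. Because every coordinate $x_v$ is strictly positive, equality in the first step forces $\prod_{v\in e}B(v,e)=\alpha$ for each edge $e$, and equality in the vertex-regrouping step forces $\sum_{e\colon v\in e}B(v,e)=1$ at every vertex $v$. But these are precisely conditions 1 and 2 of Definition \ref{anormal}, so the very matrix $B$ certifies that $H$ is $\alpha$-normal, contradicting the assumption that $H$ is \emph{strictly} $\alpha$-subnormal. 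Hence $\rho(H)<(r-1)!\,\alpha^{-1/r}$.

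The routine inequality is the easy half; the main obstacle is making the strict case airtight. The delicate point is that equality of the two ends of the chain must propagate back to equality in each individual step, and this is exactly where strict positivity of the Perron--Frobenius vector is indispensable: it guarantees that no monomial $\prod_{v\in e}x_v$ degenerates and that every vertex actually contributes, so no local slack in $B$ can be masked by a vanishing coordinate. This also pinpoints why connectivity is the natural hypothesis for the ``moreover'' clause, since Lemma \ref{PF} supplies a positive eigenvector only for connected $H$; for a disconnected hypergraph the strict conclusion can genuinely fail (a component attaining the bound may coexist with a strictly subnormal component of smaller spectral radius), so I would state and use the strict half for connected $H$.
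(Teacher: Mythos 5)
Your proof of the first inequality is correct and is essentially the paper's argument: the same three-step chain (replace $\prod_v x_v$ by $\alpha^{-1/r}\prod_v B(v,e)^{1/r}x_v$ using condition 2, apply AM--GM, regroup by vertices and use condition 1). Where you genuinely diverge is the ``moreover'' clause. The paper disposes of it in one sentence (``when $H$ is strictly $\alpha$-subnormal, this inequality is strict''), which as written is incomplete: a strict defect in $B$ at some vertex or edge only makes the corresponding term of the chain strict for a \emph{given} $\mathbf{x}$ if the relevant coordinates are nonzero, and a priori the supremum over all nonnegative $\mathbf{x}$ could still attain the bound. Your contradiction argument --- evaluate the chain at the Perron--Frobenius eigenvector, use its strict positivity to force equality edge-by-edge and vertex-by-vertex, and conclude that $B$ itself certifies $\alpha$-normality --- is exactly the right way to close that gap, and your observation that this step needs connectivity (via Lemma \ref{PF}), together with the disconnected counterexample showing the strict conclusion can otherwise fail, is a correct refinement of the hypothesis that the paper omits; in all of the paper's applications $H$ is connected, so nothing downstream is affected. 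The only cosmetic remark is that you never need equality in the AM--GM step for the contradiction, only in the first and third steps, which you correctly exploit.
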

\begin{proof} The proof is similar to inequality \eqref{eq:2}.
 For any non-zero vector ${\bf x}:=(x_1,x_2,\ldots, x_n)\in \mathbb{R}^n_{\geq 0}$,
we have
\begin{align*}
r!\sum_{\{x_{v_{1}},x_{v_{2}},\ldots,x_{v_{r}\}}\in
  E(H)}x_{v_{1}}x_{v_{2}}\cdots x_{v_{r}}
 &\leq \frac{r!}{\alpha^{\frac{1}{r}}}\sum_{e\in E(H)}\prod_{v\in e}(B^{\frac{1}{r}}(v,e)x_{v})\\
 &\leq \frac{r!}{\alpha^{\frac{1}{r}}}\sum_{e\in E(H)}\frac{\sum_{v\in e}(B(v,e)x_{v}^{r})}{r}\\
 &\leq \frac{(r-1)!}{\alpha^{\frac{1}{r}}}\|x\|_{r}^r.
\end{align*}
This inequality implies $\rho(H)\leq
\frac{(r-1)!}{\alpha^{\frac{1}{r}}}$. When $H$ is strictly
$\alpha$-subnormal, this inequality is strict, and thus $\rho(H)<
\frac{(r-1)!}{\alpha^{\frac{1}{r}}}$.
\end{proof}

\begin{definition}
   A hypergraph $H$ is called $\alpha$-supernormal if there exists a weighted
incidence matrix $B$ satisfying
\begin{enumerate}
\item $\sum_{e\colon v\in e}B(v,e)\geq 1$, for any  $v\in V(H)$.
\item $\prod_{v\in e}B(v,e)\leq \alpha$,  for any $e\in E(H)$.
\end{enumerate}
Moreover, $H$ is called {\em strictly  $\alpha$-supernormal} if
it is $\alpha$-supernormal but not $\alpha$-normal.
 \end{definition}

We have the following lemma.
\begin{lemma} \label{supernormal}
Let $H$ be an $r$-uniform hypergraph. If $H$ is strictly and consistently $\alpha$-supernormal, then
the spectral radius of $H$ satisfies
\begin{equation*}
 \rho(H)> (r-1)!\alpha^{-\frac{1}{r}}.
\end{equation*}
\end{lemma}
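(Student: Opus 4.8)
The plan is to prove this lower bound by exhibiting an explicit test vector in the Rayleigh quotient \eqref{eq:1} that beats $(r-1)!\alpha^{-1/r}$, essentially running the argument of Lemma \ref{l:main} in reverse. Assume first that $H$ is connected and let $B$ be a consistent $\alpha$-supernormal weighted incidence matrix. Exactly as in the sufficiency direction of Lemma \ref{l:main}, I would use the consistency of $B$ to build a positive vector $\mathbf{x}^*$: fix a vertex $v_0$, set $x^*_{v_0}=1$, and for any other vertex $u$ joined to $v_0$ by a path $v_0e_1v_1\cdots v_l(=u)$ put $x^*_u=\left(\prod_{i=1}^l \frac{B(v_{i-1},e_i)}{B(v_i,e_i)}\right)^{1/r}$. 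Consistency guarantees this is independent of the chosen path, so $\mathbf{x}^*$ is well defined, and positivity of the entries of $B$ guarantees $x^*_u>0$. By construction $\mathbf{x}^*$ satisfies the equality system \eqref{eq:3}, i.e. the quantity $c_e:=B(v,e)(x^*_v)^r$ is the same for every $v\in e$.

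With this choice the two AM--GM steps of \eqref{eq:2} become equalities, while the two defining inequalities of supernormality now both push the quotient upward instead of downward. Concretely, from $\prod_{v\in e}B(v,e)\le\alpha$ I get $\prod_{v\in e}x^*_v=c_e\big(\prod_{v\in e}B(v,e)\big)^{-1/r}\ge c_e\,\alpha^{-1/r}$, while summing the identity $c_e=\frac1r\sum_{v\in e}B(v,e)(x^*_v)^r$ over all edges and regrouping by vertices gives $\sum_e c_e=\frac1r\sum_v (x^*_v)^r\sum_{e\ni v}B(v,e)\ge \frac1r\|\mathbf{x}^*\|_r^r$, using $\sum_{e\ni v}B(v,e)\ge 1$. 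Chaining these,
$$P_H(\mathbf{x}^*)=r!\sum_e\prod_{v\in e}x^*_v\ \ge\ \frac{r!}{\alpha^{1/r}}\sum_e c_e\ \ge\ \frac{(r-1)!}{\alpha^{1/r}}\|\mathbf{x}^*\|_r^r,$$
so that $\rho(H)\ge(r-1)!\alpha^{-1/r}$.

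It remains to upgrade this to a strict inequality, which is where the hypothesis that $H$ is \emph{strictly} supernormal enters, and this is the step I expect to require the most care. Since $B$ is not $\alpha$-normal, at least one of the two families of defining inequalities is strict somewhere: either $\prod_{v\in e_0}B(v,e_0)<\alpha$ for some edge $e_0$, in which case the first inequality above is strict because $c_{e_0}>0$; or $\sum_{e\ni v_0}B(v_0,e)>1$ for some vertex $v_0$, in which case the second is strict because $(x^*_{v_0})^r>0$. Either way the combined bound becomes strict, giving $\rho(H)>(r-1)!\alpha^{-1/r}$. Finally, for a disconnected $H$ I would restrict $B$ to the connected component $C$ that carries the strict inequality; the restriction is still strictly and consistently $\alpha$-supernormal on $C$, so $\rho(C)>(r-1)!\alpha^{-1/r}$, and since $\rho(H)\ge\rho(C)$ the claim follows. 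The only genuine obstacle is bookkeeping: verifying that $\mathbf{x}^*$ is truly path-independent (this is exactly the consistency condition) and that strictness at a single edge or vertex survives the chaining, which it does precisely because the corresponding weight $c_{e_0}$ or $(x^*_{v_0})^r$ is strictly positive.
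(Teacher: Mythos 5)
Your proof is correct and follows essentially the same route as the paper: use consistency to build the positive test vector $\mathbf{x}^*$ satisfying \eqref{eq:3}, so that the AM--GM step is tight, and then let the two supernormality inequalities push the Rayleigh quotient up to $(r-1)!\alpha^{-1/r}$. The only differences are that you spell out explicitly why a single strict inequality at an edge or vertex survives the chaining (the paper just asserts "the inequality is strict") and that you handle the disconnected case by passing to a component, a point the paper leaves implicit.
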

\begin{proof}
By the same argument as the proof of Lemma \ref{l:main}, the
consistent condition implies that there exists a positive vector
${\bf x}=(x^*_1,x^*_2,\ldots, x^*_n)$ satisfying equation \eqref{eq:3}.
We have
\begin{align*}
r!\sum_{\{x^*_{v_{1}},x^*_{v_{2}},\ldots,x^*_{v_{r}\}}\in
  E(H)}x^*_{v_{1}}x^*_{v_{2}}\cdots x^*_{v_{r}}
 &\geq \frac{r!}{\alpha^{\frac{1}{r}}}\sum_{e\in E(H)}\prod_{v\in e}(B^{\frac{1}{r}}(v,e)x^*_{v})\\
 &= \frac{r!}{\alpha^{\frac{1}{r}}}\sum_{e\in E(H)}\frac{\sum_{v\in e}(B(v,e)(x^*_{v})^{r})}{r}\\
 &\geq \frac{(r-1)!}{\alpha^{\frac{1}{r}}}\|x^*\|_{r}^r.
\end{align*}
This inequality implies $\rho(H)\geq
\frac{(r-1)!}{\alpha^{\frac{1}{r}}}$. When $H$ is strictly
$\alpha$-supernormal, the inequality is strict, and thus $\rho(H)>
\frac{(r-1)!}{\alpha^{\frac{1}{r}}}$.
\end{proof}

By Lemma \ref{l:main},
 an $r$-uniform hypergraph $H$ has the spectral radius
$\rho_r=(r-1)!\sqrt[r]{4}$ if and only if $H$ is consistently
$\frac{1}{4}$-normal. In the remaining section, we only consider $\alpha=\frac{1}{4}$.
We say an edge $e$ is a {\em $2$-bridge} of $H$ if $e$ contains exactly two
non-leaf vertices and $H-e$ is disconnected. Let $uv$ be the two
non-leaf vertex of the $2$-bridge edge $e$. The contraction, denoted
by $H/e$ is a new hypergraph obtained from $H$ by deleting the edge
$e$ and identifying $u$ and $v$ into a new vertex $w$.  In this case, we also say $H$ is an
{\em expansion} of $H/e$ at $w$. A hypergraph $H'$ has an expansion at
$w$ if and only if $w$
is a cut vertex of $H'$, i.e. $H=H_1\cup H_2$ and $H_1\cap H_2=\{w\}$.

\begin{center}
\begin{tikzpicture}[thick, scale=0.8, bnode/.style={circle, draw,
    fill=black!50, inner sep=0pt, minimum width=4pt}, enode/.style={}]
   \draw (0,0) ellipse (1.5);
   \draw (1.5,0) node [bnode] {};
\draw (4.0,0) ellipse (1.5);
\draw (1.2,0) node [enode]{$u$};
\draw (2,0.3) node [enode]{$e$};
\draw (2.7,0) node [enode]{$v$};
\draw (0,0) node [enode]{$H_{1}$};
\draw (4,0) node [enode]{$H_{2}$};
\draw (1.5,0) -- (2,0.866) node [bnode] {} -- (2.5,0) node [bnode] {}--cycle;
\draw (2,-2) node [enode]{$H$};
\end{tikzpicture}
\hfil
\begin{tikzpicture}[thick, scale=0.8, bnode/.style={circle, draw,
    fill=black!50, inner sep=0pt, minimum width=4pt}, enode/.style={}]
   \draw (0,0) ellipse (1.5);
   \draw (1.5,0) node [bnode] {};
\draw (3.0,0) ellipse (1.5);
\draw (1.2,0) node [enode]{$w$};
\draw (0,0) node [enode]{$H_{1}$};
\draw (3,0) node [enode]{$H_{2}$};

\draw (1.5,-2) node [enode]{$H/e$};
\end{tikzpicture}

\end{center}

 We have the following lemma.

\begin{lemma}\label{l:contraction}
Let $H$ be an $r$-uniform hypergraph.
Suppose that $H$ has a $2$-bridge edge $e$. Then we have
\begin{enumerate}
\item If $\rho(H/e)> \rho_r$, then $\rho(H)> \rho_r$.
\item If $\rho(H/e)=\rho_r$, then $\rho(H)\geq\rho_r$.
The equality holds if and only if for any consistently
$\frac{1}{4}$-normal weighted incidence matrix $B$ on $H/e$,
the sum of weights at $w$ splits evenly, i.e.
$\sum_{e'\in E(H_1)}B(w,e')=\frac{1}{2}= \sum_{e'\in E(H_2)}B(w,e')$.
\end{enumerate}
\end{lemma}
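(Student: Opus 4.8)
The plan is to localize everything to the bridge edge by setting up an explicit split/merge correspondence between weighted incidence matrices on $H$ and on $H/e$. Write the bridge as $e=\{u,v,\ell_1,\dots,\ell_{r-2}\}$ with $u\in H_1$, $v\in H_2$ and $\ell_1,\dots,\ell_{r-2}$ its $r-2$ leaf vertices, and write $H/e=H_1\cup H_2$ with $H_1\cap H_2=\{w\}$. Given a weighted incidence matrix $B'$ on $H/e$, I would define $B$ on $H$ by splitting $w$ back into $u,v$: set $B(x,e')=B'(x,e')$ on every edge $e'\neq e$ (reading $w$ as $u$ on edges of $H_1$, as $v$ on edges of $H_2$), put $B(\ell_i,e)=1$, and on the bridge set $B(u,e)=1-s_1$, $B(v,e)=1-s_2$, where $s_1=\sum_{e'\in E(H_1)}B'(w,e')$ and $s_2=\sum_{e'\in E(H_2)}B'(w,e')$; merging $u,v$ recovers $B'$. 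Three features make this work: (i) the leaf entries are forced to $1$, so the bridge product is exactly $B(u,e)B(v,e)$; (ii) by AM--GM, $B(u,e)B(v,e)\le\frac14$ with equality iff $B(u,e)=B(v,e)$, i.e.\ iff $s_1=s_2=\frac12$; and (iii) since $e$ is a $2$-bridge it lies on no cycle, so every cycle of $H$ sits inside $H_1$ or $H_2$ and matches a cycle of $H/e$ with identical weight ratios, whence $B$ is consistent whenever $B'$ is.

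For Part 1, I would begin from $\rho(H/e)>\rho_r$. By Lemma \ref{l:main}, $H/e$ is consistently $\beta$-normal with $\beta=((r-1)!/\rho(H/e))^r<\frac14$; fix such a $B'$. Its vertex sums are all $1$, so $s_1+s_2=1$, giving $B(u,e)=s_2$, $B(v,e)=s_1$ and bridge product $s_1s_2\le\frac14$. All other edge products equal $\beta<\frac14$ and all vertex sums of $B$ equal $1$, so $B$ is consistently $\frac14$-supernormal; it is \emph{strictly} so because $u$, being a non-leaf vertex of $e$, lies in some edge of $H_1$, which therefore carries product $\beta<\frac14$. Lemma \ref{supernormal} then gives $\rho(H)>(r-1)!\sqrt[r]{4}=\rho_r$.

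For Part 2, assume $\rho(H/e)=\rho_r$, so by Lemma \ref{l:main} a consistently $\frac14$-normal $B'$ exists. Splitting any such $B'$ yields a consistently $\frac14$-supernormal $B$ whose bridge product is $s_1s_2\le\frac14$ and whose other products are exactly $\frac14$. If $s_1\neq s_2$ then $B$ is strictly supernormal, so $\rho(H)>\rho_r$ (Lemma \ref{supernormal}); if $s_1=s_2=\frac12$ then $B$ is consistently $\frac14$-normal, so $\rho(H)=\rho_r$ (Lemma \ref{l:main}). Either way $\rho(H)\ge\rho_r$, which is the asserted inequality. For the equality criterion I would argue both directions through this construction: contrapositively, if some consistently $\frac14$-normal $B'$ on $H/e$ has $s_1\neq s_2$ then $\rho(H)>\rho_r$, proving that $\rho(H)=\rho_r$ forces every such $B'$ to split evenly; conversely, if every consistently $\frac14$-normal $B'$ splits evenly, then choosing one (it exists) and splitting produces a consistently $\frac14$-normal $B$ on $H$, so $\rho(H)=\rho_r$.

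The two displayed checks that $B$ has the claimed products and sums are routine. The two points genuinely needing care are the preservation of consistency, which rests on the observation that a $2$-bridge lies on no cycle, and the AM--GM equality case, which is exactly what converts the even-split hypothesis into $\frac14$-normality on $H$. I expect the main obstacle to be bookkeeping the quantifier ``for any consistently $\frac14$-normal matrix'' honestly: the forward implication must use an arbitrary (hence contrapositive) witness while the backward implication may only invoke an existential one, so that the argument never silently assumes the normal matrix on $H/e$ is unique.
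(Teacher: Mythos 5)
Your proposal is correct and follows essentially the same route as the paper: extend the consistently $\alpha$-normal matrix of $H/e$ across the bridge by assigning $u$ and $v$ the complementary sums $s_2$ and $s_1$, apply AM--GM to the bridge product, and invoke Lemmas \ref{l:main} and \ref{supernormal}. Your treatment is in fact slightly more careful than the paper's on two points it leaves implicit --- the strictness of supernormality in Part 1 and the ``for any'' quantifier in the equality criterion of Part 2 --- but the underlying argument is identical.
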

\begin{proof}
 Let $B$ be the consistently
  $\alpha$-normal weighted incident matrix associated to $H/e$
with $\alpha=((r-1)!/\rho(H/e))^{r}$. Let $x:=\sum_{e'\in E(H_1)}B(w,e')$,
$y:=\sum_{e'\in E(H_2)}B(w,e')=1-x$. Now we extend
the matrix  $B$ to $H$ by defining $B(u, e)=y$, $B(v,e)=x$, $B(z,e)=1$
for any leaf vertex $z$ of $e$.

If $\rho(H/e)>\rho_r$, then $\alpha<\frac{1}{4}$.
Observe that
$$xy\leq \frac{(x+y)^2}{4}=\frac{1}{4}.$$
Thus $B$ is $\frac{1}{4}$-supernormal. Since $H$ and $H/e$ have the
same cycle space, $B$ is still consistent. Thus, $\rho(H)>\rho_r$.

If $\rho(H/e)=\rho_r$, then $\alpha=\frac{1}{4}$.
If $x=y=\frac{1}{2}$, then $B$ is consistently $\frac{1}{4}$-normal.
Thus, $\rho(H)=\rho_r$.

If $(x,y)\not=(\frac{1}{2}, \frac{1}{2})$, then
$$xy<\frac{(x+y)^2}{4}=\frac{1}{4}.$$
Thus $B$ is $\frac{1}{4}$-supernormal. Thus, $\rho(H)>\rho_r$.
\end{proof}
Finally, we show that $\rho_r$ is the limit value of the spectral
radii of paths.
\begin{lemma}
Let $A_{n}^{(r)}$ be an $r$-uniform path with $n$
edges,  and $\rho_{r}=(r-1)!\sqrt[r]{4}$.
Then, for any $r\geq 2$, we have $\lim_{n\rightarrow \infty}\rho(A_{n}^{(r)})=\rho_{r}$.
\end{lemma}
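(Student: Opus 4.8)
The plan is to compute $\rho(A_n^{(r)})$ exactly using Lemma~\ref{l:main} and the rigid structure of a simple path, and then pass to the limit. The key observation is that in the $r$-uniform path $A_n^{(r)}$ only the $n-1$ vertices of degree two (the ``connecting'' vertices $v_1,\dots,v_{n-1}$, where $v_i\in e_i\cap e_{i+1}$) carry nontrivial weights: every other vertex is a leaf, and in any $\alpha$-normal weighted incidence matrix condition~1 forces $B(z,e)=1$ at a leaf $z$. Hence each leaf contributes a factor $1$ to the edge products, and condition~2 collapses to a system involving only $c_i:=B(v_i,e_i)$ and $d_i:=B(v_i,e_{i+1})$, namely $c_i+d_i=1$ at every $v_i$ together with $c_1=\alpha$, $d_{n-1}=\alpha$, and $d_{i-1}c_i=\alpha$ for $2\le i\le n-1$. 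These are \emph{exactly} the $\alpha$-normality equations for the ordinary graph path $A_n$ (the $2$-uniform path), whose internal vertices play the role of the $v_i$.

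First I would make this correspondence precise as an equivalence: restricting a consistently $\alpha$-normal matrix of $A_n^{(r)}$ to the connecting vertices yields a consistently $\alpha$-normal matrix of $A_n$ with the same $\alpha$, and conversely extending by leaf-weights $1$ recovers one on $A_n^{(r)}$; consistency is automatic since both are hypertrees. Therefore $A_n^{(r)}$ and $A_n$ are consistently $\alpha$-normal for exactly the same value of $\alpha$.

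Next I would invoke Lemma~\ref{l:main} twice. Applied to $A_n^{(r)}$ it gives a unique $\alpha_n$ with $\rho(A_n^{(r)})=(r-1)!\,\alpha_n^{-1/r}$; applied to the $2$-uniform $A_n$ it gives $\rho(A_n)=\alpha_n^{-1/2}$ for the \emph{same} $\alpha_n$ by the previous step. Eliminating $\alpha_n$ yields the clean identity
\begin{equation*}
\rho(A_n^{(r)})=(r-1)!\,\rho(A_n)^{2/r}.
\end{equation*}
Since $\lim_{n\to\infty}\rho(A_n)=2$ for ordinary graph paths (the classical fact recalled in the introduction), taking $n\to\infty$ gives $\lim_{n\to\infty}\rho(A_n^{(r)})=(r-1)!\,2^{2/r}=(r-1)!\sqrt[r]{4}=\rho_r$, as claimed.

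I expect the only real obstacle to be the bookkeeping in the first two steps: one must check that the leaf weights are genuinely forced, that the surviving equations match the graph case term by term (including the two boundary edges $e_1,e_n$), and that the value of $\alpha$ is determined uniquely — this last point being exactly the content of Lemma~\ref{l:main}, which pins $\alpha=((r-1)!/\rho(H))^r$. If one instead wishes to avoid citing $\rho(A_n)\to 2$, the same conclusion can be reached by a sandwich argument: the uniform choice $c_i=d_i=\tfrac12$ makes $A_n^{(r)}$ strictly $\tfrac14$-subnormal, so $\rho(A_n^{(r)})<\rho_r$ for every $n$ by Lemma~\ref{subnormal}; and for each fixed $\beta>\tfrac14$ one builds, for all large $n$, a strictly consistently $\beta$-supernormal labeling by keeping $c_i=d_i=\tfrac12$ on a long central stretch and inserting a fixed-length ``boundary layer'' at each end to lower the two end-edge products from $\tfrac12$ down to $\le\beta$, whence $\rho(A_n^{(r)})>(r-1)!\beta^{-1/r}$ by Lemma~\ref{supernormal}; letting $\beta\to\tfrac14^{+}$ then closes the gap. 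The delicate point of this alternative is verifying that the boundary layer has finite length for every $\beta>\tfrac14$.
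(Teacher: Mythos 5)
Your argument is correct, but it takes a genuinely different route from the paper's. The paper proves the lemma by a two-sided estimate: the uniform labeling $B(v,e)=\frac{1}{2}$ at every non-leaf vertex is a strictly $\frac{1}{4}$-subnormal weighting, giving $\rho(A_n^{(r)})<\rho_r$ via Lemma \ref{subnormal}; and an explicit test vector (value $y=\sqrt[r]{2n/(n+1)}$ on the connecting vertices and two chosen end leaves, value $1$ elsewhere) plugged into the Rayleigh quotient \eqref{eq:1} gives the matching lower bound $(1+\frac{2}{n}+\frac{1}{n^2})^{-1/r}\rho_r\leq\rho(A_n^{(r)})$, after which one squeezes. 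Your main route instead observes that the $\alpha$-normality equations of $A_n^{(r)}$ collapse, after the forced leaf weights $1$, to exactly those of the $2$-uniform path $A_n$ --- this is precisely the reduction/extension mechanism the paper formalizes later as Lemma \ref{l:reduced} and Corollary \ref{extend}, specialized to paths --- and combined with Lemma \ref{l:main} this yields the exact identity $\rho(A_n^{(r)})=(r-1)!\,\rho(A_n)^{2/r}$, which is sharper information than the paper's proof extracts. The trade-off is that you must import the classical limit $\rho(A_n)\to 2$: for $r=2$ your proof reduces to a citation of that fact, whereas the paper's argument is uniform and self-contained for all $r\geq 2$ and, as a by-product, also establishes the strict inequality $\rho(A_n^{(r)})<\rho_r$ that is used in the classification. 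Your alternative sandwich argument reproduces the paper's subnormal upper bound exactly, but its supernormal half (the $\beta$-dependent boundary layer) is noticeably more delicate than the paper's one-line Rayleigh-quotient computation, which is the cleaner way to close the lower bound.
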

\begin{proof}
We will first show that $\rho(A_{n}^{(r)})<\rho_{r}$. By labeling $A_{n}^{(r)}$ as follows,
\begin{center}
\begin{tikzpicture}[thick, scale=0.8, bnode/.style={circle, draw,
    fill=black!50, inner sep=0pt, minimum width=4pt}, enode/.style={color=red}]
\foreach \x in {0,1,...,6}
    {
    \path[fill=gray]  (\x,0) node [bnode] {} -- (\x+0.5,0.866) node [bnode] {} --(\x+1,0)--cycle;
}

\draw (1.25,0) node [enode] {$\frac{1}{2}$};
\draw (2.25,0) node [enode] {$\frac{1}{2}$};
\draw (3.25,0) node [enode] {$\frac{1}{2}$};
\draw (4.25,0) node [enode] {$\frac{1}{2}$};
\draw (5.25,0) node [enode] {$\frac{1}{2}$};
\draw (6.25,0) node [enode] {$\frac{1}{2}$};
\draw (1.75,0) node [enode] {$\frac{1}{2}$};
\draw (2.75,0) node [enode] {$\frac{1}{2}$};
\draw (3.75,0) node [enode] {$\frac{1}{2}$};
\draw (4.75,0) node [enode] {$\frac{1}{2}$};
\draw (5.75,0) node [enode] {$\frac{1}{2}$};
\draw (6.75,0) node [enode] {$\frac{1}{2}$};

\draw (7.5,0) node [color=black] {$\cdots$};
\foreach \x in {8,9}
    {
    \path[fill=gray]  (\x,0) node [bnode] {} -- (\x+0.5,0.866) node [bnode] {} --(\x+1,0)--cycle;
}

\draw (7,0) node  [bnode] {};
\draw (10,0) node  [bnode] {};

\draw (5,-1) node [color=black] {$ A^{(r)}_n$};
\draw (8.75,0) node [enode] {$\frac{1}{2}$};
\draw (8.25,0) node [enode] {$\frac{1}{2}$};
\draw (0.75,0) node [enode] {$\frac{1}{2}$};
\draw (8.75,0) node [enode] {$\frac{1}{2}$};
\draw (9.25,0) node [enode] {$\frac{1}{2}$};
\draw (0,-0.25) node [color=red] [enode] {$u_{1}$};
\draw (10,-0.25) node [color=red] [enode] {$u_{2}$};
\end{tikzpicture}
\end{center}
\noindent we can check that this is a strict $\frac{1}{4}$-subnormal labeling. Thus, $\rho(A_{n}^{(r)})<\rho_{r}$.
On the other hand, by the definition of $\rho(H)$ in \eqref{eq:1} and choosing
\[
x_{v}^{*}=
\begin{cases}
  1
 & \mbox{ v is a leaf}, v\neq u_{1},u_{2}; \\
 y & \mbox{ otherwise}
\end{cases}
\]
where $y=\sqrt[r]{\frac{2n}{n+1}}$,
we have
$\rho(A_{n}^{(r)})\geq \frac{P_H(x^*)}{\|x^*\|^r}=
\frac{r!n\cdot y^{2}}{n(r-2)+(n+1)y^{r}}=(1+\frac{2}{n}+\frac{1}{n^{2}})^{-\frac{1}{r}}\rho_{r}$.
Therefore, $(1+\frac{2}{n}+\frac{1}{n^{2}})^{-\frac{1}{r}}\rho_{r}\leq\rho(A_{n}^{(r)})<\rho_{r}$.
By $n\rightarrow \infty$, we get $\lim_{n\to\infty}
\rho(A_n^{(r)})=\rho_r$ and complete the proof of this Lemma.
\end{proof}

\section{The $3$-uniform hypergraphs}
In this section, we will classify all connected $3$-uniform
hypergraphs with spectral radius at most $\rho_3:=2\sqrt[3]{4}$.
Here
are our results.

\begin{theorem}
\label{t2}
  Let $\rho_3=2\sqrt[3]{4}$. If the spectral radius of a
connected $3$-uniform hypergraph $H$ is equal to $\rho_3$,
then $H$ must be one of the following graphs:

\begin{enumerate}
\item $C_n^{(3)}$: the simple cycle of $n$ edges (for $n\geq
  3$).

\begin{center}
\begin{tikzpicture}[thick, scale=0.8, bnode/.style={circle, draw,
    fill=black!50, inner sep=0pt, minimum width=4pt}, enode/.style={}]
\foreach \x in {-60,-30,...,240}
    {
    \path[fill=gray]  (\x-15:2) node [bnode] {} -- (\x:3) node [bnode] {} --(\x+15:2)--cycle;
}
\draw (270:2) node  [enode] {$\cdots$};
\draw (255:2) node [bnode] {};
\draw (0,0) node [enode] {$C^{(3)}_n$};
\end{tikzpicture}
\end{center}

\item $\tilde D_n^{(3)}$ for $n\geq 5$, where $n$ is the number of edges.
\begin{center}
\begin{tikzpicture}[thick, scale=0.8, bnode/.style={circle, draw,
    fill=black!50, inner sep=0pt, minimum width=4pt}, enode/.style={}]
\foreach \x in {0,1,...,6}
    {
    \path[fill=gray]  (\x,0) node [bnode] {} -- (\x+0.5,0.866) node [bnode] {} --(\x+1,0)--cycle;
}

\draw (7.5,0) node [enode] {$\cdots$};
\foreach \x in {8,9}
    {
    \path[fill=gray]  (\x,0) node [bnode] {} -- (\x+0.5,0.866) node [bnode] {} --(\x+1,0)--cycle;
}

\draw (7,0) node  [bnode] {};
\draw (10,0) node  [bnode] {};

\foreach \x in {1,9}
    {
    \path[fill=gray]  (\x,0) -- (\x-0.5,-0.866) node [bnode] {} --
    (\x+0.5,-0.866)  node [bnode] {} --cycle;
}

\draw (5,-1) node [enode] {$\tilde D^{(3)}_n$};
\end{tikzpicture}
\end{center}

\item $\tilde B_n^{(3)}$ for $n\geq 8 $, where $n$ is the number of edges.
\begin{center}
\begin{tikzpicture}[thick, scale=0.8, bnode/.style={circle, draw,
    fill=black!50, inner sep=0pt, minimum width=4pt}, enode/.style={}]
\path[fill=gray]  (2.5,0.866) -- (2,1.732) node [bnode] {} --
    (3,1.732)  node [bnode] {} --cycle;
\path[fill=gray]  (7.5,0.866) -- (7,1.732) node [bnode] {} --
    (8,1.732)  node [bnode] {} --cycle;
\foreach \x in {0,1,...,5}
    {
    \path[fill=gray]  (\x,0) node [bnode] {} -- (\x+0.5,0.866) node [bnode] {} --(\x+1,0)--cycle;
}

\draw (6.5,0) node [enode] {$\cdots$};
\foreach \x in {7,8,9}
    {
    \path[fill=gray]  (\x,0) node [bnode] {} -- (\x+0.5,0.866) node [bnode] {} --(\x+1,0)--cycle;
}

\draw (6,0) node  [bnode] {};
\draw (10,0) node  [bnode] {};
\draw (5,-1) node [enode] {$\tilde B^{(3)}_n$};
\end{tikzpicture}
\end{center}

\item $\widetilde{BD}_n^{(3)}$ for $n\geq 6$, where $n$ is the number of edges.
\begin{center}
\begin{tikzpicture}[thick, scale=0.8, bnode/.style={circle, draw,
    fill=black!50, inner sep=0pt, minimum width=4pt}, enode/.style={}]

\path[fill=gray]  (7.5,0.866) -- (7,1.732) node [bnode] {} --
    (8,1.732)  node [bnode] {} --cycle;

\path[fill=gray]  (1,0) -- (0.5,-0.866) node [bnode] {} --
    (1.5,-0.866)  node [bnode] {} --cycle;

\foreach \x in {0,1,...,5}
    {
    \path[fill=gray]  (\x,0) node [bnode] {} -- (\x+0.5,0.866) node [bnode] {} --(\x+1,0)--cycle;
}

\draw (6.5,0) node [enode] {$\cdots$};
\foreach \x in {7,8,9}
    {
    \path[fill=gray]  (\x,0) node [bnode] {} -- (\x+0.5,0.866) node [bnode] {} --(\x+1,0)--cycle;
}

\draw (6,0) node  [bnode] {};
\draw (10,0) node  [bnode] {};
\draw (10,0) node  [bnode] {};
\draw (5,-1) node [enode] {$\widetilde{BD}_n^{(3)}$};
\end{tikzpicture}
\end{center}

\item Twelve exceptional $3$-uniform hypergraphs: $C^{(3)}_2$,
  $S^{(3)}_{4}$, $\tilde E^{(3)}_6$, $\tilde E^{(3)}_7$,  $\tilde
  E^{(3)}_8$, $F^{(3)}_{2,3,4}$, $F^{(3)}_{2,2,7}$,
      $F^{(3)}_{1,5,6}$, $F^{(3)}_{1,4,8}$, $F^{(3)}_{1,3,14}$,
            $G^{(3)}_{1,1:0:1,4}$, and $G^{(3)}_{1,1:6:1,3}$.
(See Figure \ref{12exceptions}.)
\end{enumerate}
\end{theorem}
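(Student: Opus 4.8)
The plan is to recast the theorem through Lemma~\ref{l:main}: since $r=3$ and $\rho_3=2\sqrt[3]{4}$ give $\alpha=(2!/\rho_3)^3=\tfrac14$, a connected $3$-uniform hypergraph $H$ satisfies $\rho(H)=\rho_3$ \emph{if and only if} $H$ is consistently $\tfrac14$-normal. The sufficiency half of the classification—that every hypergraph in the list attains $\rho_3$—is then finite bookkeeping: for each family one exhibits an explicit consistent $\tfrac14$-normal weighted incidence matrix (mirroring the $\widetilde E_6,\widetilde E_7,\widetilde E_8$ labelings already displayed for $r=2$) and checks the two defining identities $\sum_{e\ni v}B(v,e)=1$ and $\prod_{v\in e}B(v,e)=\tfrac14$ directly. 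So the substance is the \emph{necessity} direction, which I would drive by two engines. First, Lemma~\ref{subgraph} (monotonicity) says $H$ can contain no sub-hypergraph with spectral radius $>\rho_3$, and no \emph{proper} connected sub-hypergraph with spectral radius exactly $\rho_3$. Second, Lemma~\ref{supernormal} lets me certify that a local configuration $C$ has $\rho(C)>\rho_3$ by exhibiting a consistent, strictly $\tfrac14$-supernormal labeling on $C$ (sums $\ge 1$, products $\le\tfrac14$, with at least one inequality strict). Together these give a forbidden-subgraph method: any configuration carrying a strictly supernormal certificate cannot sit inside $H$.

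With this setup I would split on whether $H$ contains a cycle. If $H$ is non-simple it contains a $2$-cycle $v_1F_1v_2F_2v_1$, whose spanned sub-hypergraph is $C_2^{(3)}$; assigning $B=\tfrac12$ to both non-leaf incidences on each edge and $1$ to the leaves makes $C_2^{(3)}$ consistently $\tfrac14$-normal, so $\rho(C_2^{(3)})=\rho_3$. Likewise every genuine cycle spans a simple $C_\ell^{(3)}$ ($\ell\ge 3$), which the same $\tfrac12$-$\tfrac12$ labeling shows is consistently $\tfrac14$-normal with $\rho=\rho_3$. In either case monotonicity forces $H$ to equal that cycle, since otherwise the cycle would be a proper subgraph of equal spectral radius. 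This disposes of $C_n^{(3)}$ and $C_2^{(3)}$ and reduces the problem to $H$ acyclic, i.e.\ a hypertree—where, conveniently, $H$ is automatically simple and every $\tfrac14$-normal labeling is automatically consistent.

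For the hypertree case I would first pin down the local geometry. A vertex of large degree whose incident edges carry leaves contains a star $S_d^{(3)}$; one checks $S_4^{(3)}$ is exactly $\tfrac14$-normal (weight $\tfrac14$ at the center) while $S_5^{(3)}$ is strictly $\tfrac14$-supernormal, so degrees and the leaf content of edges are bounded by such certificates, confining $H$ to a path-like \emph{spine} of degree-$2$ vertices carrying one leaf per edge, together with finitely many bounded-degree branch and end decorations. Along the spine the normalization is rigid: writing $p_i,q_i$ for the two non-leaf weights on the $i$-th spine edge, the edge identity gives $p_iq_i=\tfrac14$ and the vertex identity gives $q_i+p_{i+1}=1$, hence the recurrence $q_{i+1}=\tfrac{1}{4(1-q_i)}$ with the single \emph{parabolic} fixed point $q=\tfrac12$ (the double root of $(2q-1)^2=0$). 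A long spine therefore forces the weights toward $\tfrac12$, and the two ends must be closed off by decorations compatible with that boundary behavior; this selection, together with Lemma~\ref{l:contraction} to glue admissible pieces at $\tfrac12$-$\tfrac12$ cut vertices, produces exactly the two-ended infinite families $\widetilde D_n^{(3)}$, $\widetilde B_n^{(3)}$, $\widetilde{BD}_n^{(3)}$ (the latter two being genuine hypergraph phenomena with no simple-graph analogue, coming from the extra leaf vertices a $3$-edge can carry at an end).

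I expect the main obstacle to be the residual \emph{sporadic} enumeration. Once the degree and branch-vertex bounds are in force, the configurations in which the spine is too short—or branch vertices lie too close—for the two ends to behave independently cannot be read off the recurrence and must be found by a bounded search: these are the $\widetilde E_6^{(3)},\widetilde E_7^{(3)},\widetilde E_8^{(3)}$ analogues, the $F^{(3)}_{\ast}$ and $G^{(3)}_{\ast}$ graphs, and $S_4^{(3)}$. The delicate work is twofold: delimiting \emph{precisely} which end-decorations are admissible, by computing the forced boundary weights and verifying that the terminal-edge product equals exactly $\tfrac14$ (and not $>\tfrac14$, which would yield a strictly supernormal certificate and hence $\rho>\rho_3$); and certifying that the finite search is complete, i.e.\ that the degree and diameter bounds genuinely confine the sporadic cases to the twelve listed graphs. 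Everything else reduces to the mechanical verification of the two $\tfrac14$-normal identities.
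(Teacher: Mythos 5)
Your plan matches the paper's proof in both architecture and tools: sufficiency by exhibiting explicit consistent $\frac{1}{4}$-normal labelings, and necessity by a forbidden-subgraph argument driven by Lemma \ref{subgraph}, strictly $\frac{1}{4}$-supernormal certificates (Lemma \ref{supernormal}), and the contraction/expansion Lemma \ref{l:contraction}, with the same case split (cycles, then degree bounds, then a bounded sporadic search). Your spine recurrence $q_{i+1}=\frac{1}{4(1-q_i)}$ with parabolic fixed point $\frac{1}{2}$ is exactly the pattern the paper's explicit edge labels follow, so this is the same proof, just packaged slightly more systematically.
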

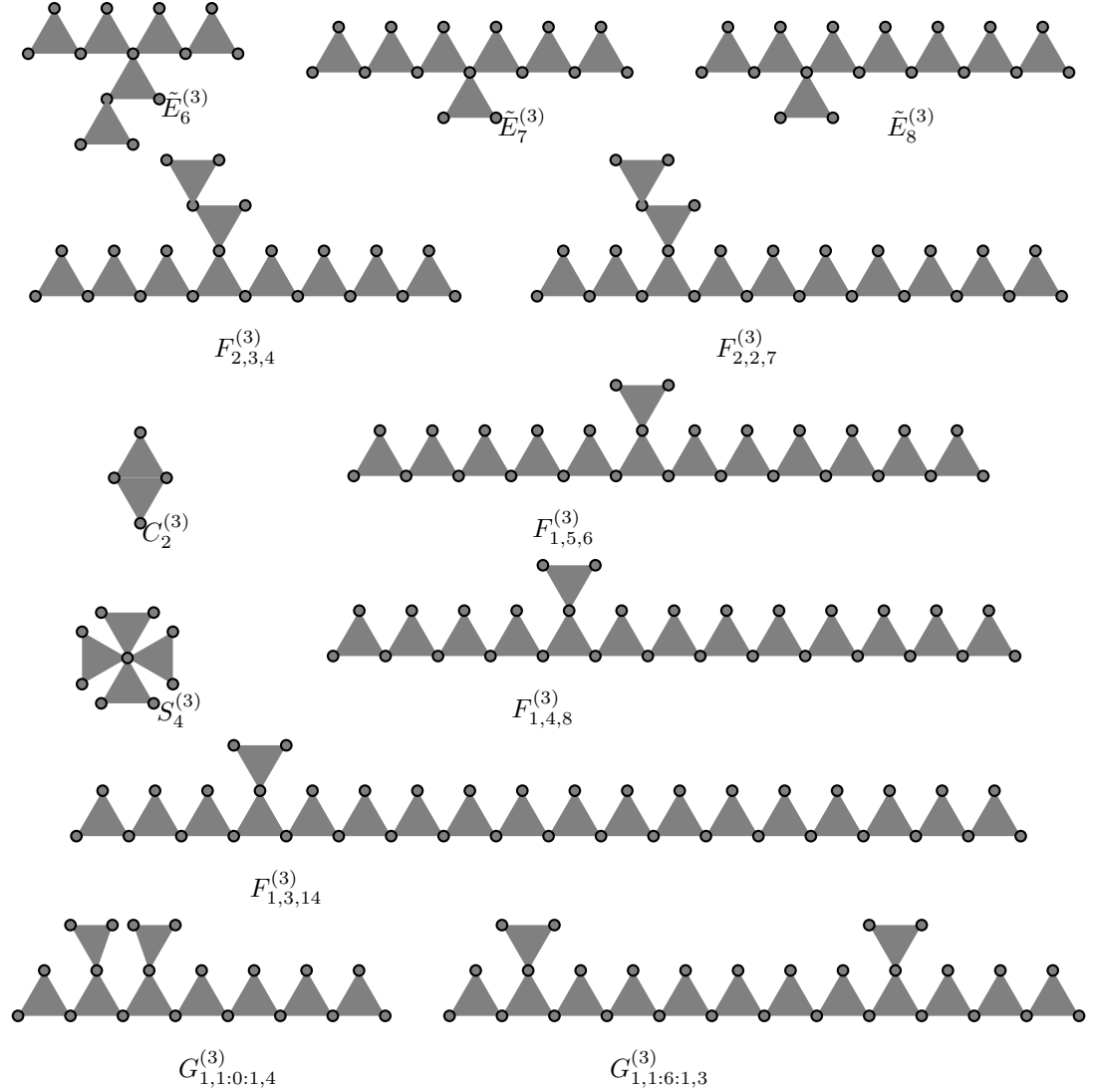
\begin{figure}[h]
  \centering
\begin{tikzpicture}[thick, scale=0.7, bnode/.style={circle, draw,
    fill=black!50, inner sep=0pt, minimum width=4pt}, enode/.style={}]
\path[fill=gray]  (2,0) -- (1.5,-0.866) node [bnode] {} --
    (2.5,-0.866)  node [bnode] {} --cycle;
\path[fill=gray]  (1.5,-0.866) -- (2,-1.732) node [bnode] {} --
    (1,-1.732)  node [bnode] {} --cycle;
\foreach \x in {0,1,...,3}
    {
    \path[fill=gray]  (\x,0) node [bnode] {} -- (\x+0.5,0.866) node [bnode] {} --(\x+1,0)--cycle;
}
\draw (4,0) node  [bnode] {};
\draw (3,-1) node [enode] {$\tilde E_6^{(3)}$};
\end{tikzpicture}
\hfil
\begin{tikzpicture}[thick, scale=0.7, bnode/.style={circle, draw,
    fill=black!50, inner sep=0pt, minimum width=4pt}, enode/.style={}]
\path[fill=gray]  (3,0) -- (2.5,-0.866) node [bnode] {} --
    (3.5,-0.866)  node [bnode] {} --cycle;
\foreach \x in {0,1,...,5}
    {
    \path[fill=gray]  (\x,0) node [bnode] {} -- (\x+0.5,0.866) node [bnode] {} --(\x+1,0)--cycle;
}

\draw (6,0) node  [bnode] {};
\draw (4,-1) node [enode] {$\tilde E_7^{(3)}$};
\end{tikzpicture}
\hfil
\begin{tikzpicture}[thick, scale=0.7, bnode/.style={circle, draw,
    fill=black!50, inner sep=0pt, minimum width=4pt}, enode/.style={}]
\path[fill=gray]  (2,0) -- (1.5,-0.866) node [bnode] {} --
    (2.5,-0.866)  node [bnode] {} --cycle;
\foreach \x in {0,1,...,6}
    {
    \path[fill=gray]  (\x,0) node [bnode] {} -- (\x+0.5,0.866) node [bnode] {} --(\x+1,0)--cycle;
}

\draw (7,0) node  [bnode] {};
\draw (4,-1) node [enode] {$\tilde E_8^{(3)}$};
\end{tikzpicture}
\\
\begin{tikzpicture}[thick, scale=0.7, bnode/.style={circle, draw,
    fill=black!50, inner sep=0pt, minimum width=4pt}, enode/.style={}]
\path[fill=gray]  (3.5,0.866) -- (3,1.732) node [bnode] {} --
    (4,1.732)  node [bnode] {} --cycle;
\path[fill=gray]  (3,1.732) -- (2.5,2.598) node [bnode] {} --
   (3.5,2.598)  node [bnode] {} --cycle;
\foreach \x in {0,1,...,7}
    {
    \path[fill=gray]  (\x,0) node [bnode] {} -- (\x+0.5,0.866) node [bnode] {} --(\x+1,0)--cycle;
}

\draw (8,0) node  [bnode] {};
\draw (4,-1) node [enode] {$F_{2,3,4}^{(3)}$};
\end{tikzpicture}
\hfil
\begin{tikzpicture}[thick, scale=0.7, bnode/.style={circle, draw,
    fill=black!50, inner sep=0pt, minimum width=4pt}, enode/.style={}]
\path[fill=gray]  (2.5,0.866) -- (2,1.732) node [bnode] {} --
    (3,1.732)  node [bnode] {} --cycle;
\path[fill=gray]  (2,1.732) -- (1.5,2.598) node [bnode] {} --
   (2.5,2.598)  node [bnode] {} --cycle;
\foreach \x in {0,1,...,9}
    {
    \path[fill=gray]  (\x,0) node [bnode] {} -- (\x+0.5,0.866) node [bnode] {} --(\x+1,0)--cycle;
}

\draw (10,0) node  [bnode] {};
\draw (4,-1) node [enode] {$ F_{2,2,7}^{(3)}$};
\end{tikzpicture}\\

\begin{tikzpicture}[thick, scale=0.7, bnode/.style={circle, draw,
    fill=black!50, inner sep=0pt, minimum width=4pt}, enode/.style={}]
\path[fill=gray]  (0,0) -- (0.5,-0.866) node [bnode] {} --
    (1,0)   --cycle;
\path[fill=gray]  (0,0) node [bnode] {} -- (0.5, 0.866) node [bnode] {} --
    (1,0)  node [bnode] {} --cycle;
\draw (1,-1) node [enode] {$C_2^{(3)}$};
\end{tikzpicture}
\hfil
\begin{tikzpicture}[thick, scale=0.7, bnode/.style={circle, draw,
    fill=black!50, inner sep=0pt, minimum width=4pt}, enode/.style={}]
\path[fill=gray]  (5.5,0.866) -- (5,1.732) node [bnode] {} --
    (6,1.732)  node [bnode] {} --cycle;
\foreach \x in {0,1,...,11}
    {
    \path[fill=gray]  (\x,0) node [bnode] {} -- (\x+0.5,0.866) node [bnode] {} --(\x+1,0)--cycle;
}

\draw (12,0) node  [bnode] {};
\draw (4,-1) node [enode] {$ F_{1,5,6}^{(3)}$};
\end{tikzpicture}
\\

\begin{tikzpicture}[thick, scale=0.7, bnode/.style={circle, draw,
    fill=black!50, inner sep=0pt, minimum width=4pt}, enode/.style={}]
\foreach \x in {0,90,...,270}
    {
    \path[fill=gray]  (0,0) -- (\x-30:1) node [bnode] {} --(\x+30:1) node
    [bnode] {} --cycle;
}
\draw (0,0) node  [bnode] {};
\draw (1,-1) node [enode] {$ S_4^{(3)}$};
\end{tikzpicture}
\hfil
\begin{tikzpicture}[thick, scale=0.7, bnode/.style={circle, draw,
    fill=black!50, inner sep=0pt, minimum width=4pt}, enode/.style={}]
\path[fill=gray]  (4.5,0.866) -- (4,1.732) node [bnode] {} --
    (5,1.732)  node [bnode] {} --cycle;
\foreach \x in {0,1,...,12}
    {
    \path[fill=gray]  (\x,0) node [bnode] {} -- (\x+0.5,0.866) node [bnode] {} --(\x+1,0)--cycle;
}

\draw (13,0) node  [bnode] {};
\draw (4,-1) node [enode] {$ F_{1,4,8}^{(3)}$};
\end{tikzpicture}
\\

\begin{tikzpicture}[thick, scale=0.7, bnode/.style={circle, draw,
    fill=black!50, inner sep=0pt, minimum width=4pt}, enode/.style={}]
\path[fill=gray]  (3.5,0.866) -- (3,1.732) node [bnode] {} --
    (4,1.732)  node [bnode] {} --cycle;
\foreach \x in {0,1,...,17}
    {
    \path[fill=gray]  (\x,0) node [bnode] {} -- (\x+0.5,0.866) node [bnode] {} --(\x+1,0)--cycle;
}

\draw (18,0) node  [bnode] {};
\draw (4,-1) node [enode] {$ F_{1,3,14}^{(3)}$};
\end{tikzpicture}
\\

\begin{tikzpicture}[thick, scale=0.7, bnode/.style={circle, draw,
    fill=black!50, inner sep=0pt, minimum width=4pt}, enode/.style={}]
\path[fill=gray]  (1.5,0.866) -- (1,1.732) node [bnode] {} --
    (1.8,1.732)  node [bnode] {} --cycle;
\path[fill=gray]  (2.5,0.866) -- (2.2,1.732) node [bnode] {} --
    (3,1.732)  node [bnode] {} --cycle;

\foreach \x in {0,1,...,6}
    {
    \path[fill=gray]  (\x,0) node [bnode] {} -- (\x+0.5,0.866) node [bnode] {} --(\x+1,0)--cycle;
}

\draw (7,0) node  [bnode] {};
\draw (4,-1) node [enode] {$ G_{1,1:0:1,4}^{(3)}$};
\end{tikzpicture}
\hfil
\begin{tikzpicture}[thick, scale=0.7, bnode/.style={circle, draw,
    fill=black!50, inner sep=0pt, minimum width=4pt}, enode/.style={}]
\path[fill=gray]  (1.5,0.866) -- (1,1.732) node [bnode] {} --
    (2,1.732)  node [bnode] {} --cycle;
\path[fill=gray]  (8.5,0.866) -- (8,1.732) node [bnode] {} --
    (9,1.732)  node [bnode] {} --cycle;

\foreach \x in {0,1,...,11}
    {
    \path[fill=gray]  (\x,0) node [bnode] {} -- (\x+0.5,0.866) node [bnode] {} --(\x+1,0)--cycle;
}

\draw (12,0) node  [bnode] {};
\draw (4,-1) node [enode] {$ G_{1,1:6:1,3}^{(3)}$};
\end{tikzpicture}\\

  \caption{Twelve exceptional $3$-uniform hypergraphs of spectral
    radius $2\sqrt[3]{4}$.}
\label{12exceptions}
\end{figure}

The notation of $3$-uniform hypergraphs in Theorem \ref{t2} are
self-defined by the figures.  We denote by  $E^{(3)}_{i,j,k}$
the $3$-uniform hypergraphs obtained by attaching three paths of
length $i$, $j$, $k$ to one vertex. For the consistence with $r=2$, we
set alias: $E^{(3)}_6=E^{(3)}_{1,2,2}$, $E^{(3)}_7=E^{(3)}_{1,2,3}$,
$E^{(3)}_8=E^{(3)}_{1,2,4}$,
$\tilde E^{(3)}_6=E^{(3)}_{2,2,2}$, $\tilde
E^{(3)}_7=E^{(3)}_{1,3,3}$, $\tilde E^{(3)}_8=E^{(3)}_{1,2,5}$, and
$D^{(3)}_n=E^{(3)}_{1,1,n-2}$.
  \begin{center}
\begin{tikzpicture}[thick, scale=0.7, bnode/.style={circle, draw,
    fill=black!50, inner sep=0pt, minimum width=4pt}, enode/.style={color=red}]
\path[fill=gray]  (3.5,-0.866) node [bnode] {} -- (4,0) --
    (4.5,-0.866)  node [bnode] {} --cycle;
\path[fill=gray]  (4,-1.732) node [bnode] {} -- (4.5,-0.866) --
    (5,-1.732)  node [bnode] {} --cycle;

\foreach \x in {0,1,...,7}
    {
    \path[fill=gray]  (\x,0) node [bnode] {} -- (\x+0.5,0.866) node [bnode] {} --(\x+1,0)--cycle;
}

\draw (8,0) node  [bnode] {};
\draw (3,-2.5) node [color=black] {$E_{i,j,k}^{(3)}$};
\draw (4.0,-0.5) node [enode] {$\downarrow$};
\draw (4.0,-1.05) node [enode] {$i$};
\draw (3.5,-0.25) node [enode] {$\leftarrow$};
\draw (3.05,-0.5) node [enode] {$j$};
\draw (4.5,-0.25) node [enode] {$\rightarrow$};
\draw (4.5,-0.5) node [enode] {$k$};
\end{tikzpicture}
  \end{center}

 We denote by  $F^{(3)}_{i,j,k}$
the $3$-uniform hypergraphs obtained by attaching three paths of
length $i$, $j$, $k$ to each vertex of one edge. We set alias: 
${D'}^{(3)}_n=F^{(3)}_{1,1,n-3}$ and ${B}^{(3)}_n=F^{(3)}_{1,2,n-4}$.

  \begin{center}
\begin{tikzpicture}[thick, scale=0.7, bnode/.style={circle, draw,
    fill=black!50, inner sep=0pt, minimum width=4pt}, enode/.style={color=red}]
\path[fill=gray]  (3.5,0.866) -- (3,1.732) node [bnode] {} --
    (4,1.732)  node [bnode] {} --cycle;
\path[fill=gray]  (3,1.732) -- (2.5,2.598) node [bnode] {} --
   (3.5,2.598)  node [bnode] {} --cycle;
\foreach \x in {0,1,...,7}
    {
    \path[fill=gray]  (\x,0) node [bnode] {} -- (\x+0.5,0.866) node [bnode] {} --(\x+1,0)--cycle;
}

\draw (8,0) node  [bnode] {};
\draw (4,-1.5) node [color=black] {$F_{i,j,k}^{(3)}$};
\draw (3.05,1.25) node [enode] {$\uparrow$};
\draw (2.85,1.25) node [enode] {$i$};
\draw (2.75,-0.25) node [enode] {$\leftarrow$};
\draw (2.75,-0.5) node [enode] {$j$};
\draw (4.25,-0.25) node [enode] {$\rightarrow$};
\draw (4.25,-0.5) node [enode] {$k$};
\end{tikzpicture}
  \end{center}

We denote by  $G^{(3)}_{i,j:k:l,m}$ the $3$-uniform hypergraphs obtained by attaching four paths of
length $i$, $j$, $l$, $m$ to four ending vertices of path of length
$k+2$ as shown in the following figure:
\begin{center}
\begin{tikzpicture}[thick, scale=0.7, bnode/.style={circle, draw,
    fill=black!50, inner sep=0pt, minimum width=4pt}, enode/.style={color=red}]
\path[fill=gray]  (2.5,0.866) -- (2,1.732) node [bnode] {} --
    (3,1.732)  node [bnode] {} --cycle;
\path[fill=gray]  (2,1.732) -- (1.5,2.598) node [bnode] {} --
    (2.5,2.598)  node [bnode] {} --cycle;
\path[fill=gray]  (9.5,0.866) -- (9,1.732) node [bnode] {} --
    (10,1.732)  node [bnode] {} --cycle;
\path[fill=gray]  (9,1.732) -- (8.5,2.598) node [bnode] {} --
    (9.5,2.598)  node [bnode] {} --cycle;
\foreach \x in {0,1,...,11,12}
    {
    \path[fill=gray]  (\x,0) node [bnode] {} -- (\x+0.5,0.866) node [bnode] {} --(\x+1,0)--cycle;
}

\draw (13,0) node  [bnode] {};
\draw (1.75,1.25) node [enode] {$i$};
\draw (1.95,1.25) node [enode] {$\uparrow$};
\draw (1.75,-0.25) node [enode] {$\leftarrow$};
\draw (1.75,-0.5) node [enode] {$j$};
\draw (3.25,-0.25) node [enode] {$\rightarrow$};
\draw (3.25,-0.5) node [enode] {$k$};
\draw (8.75,1.25) node [enode] {$l$};
\draw (8.95,1.25) node [enode] {$\uparrow$};
\draw (8.75,-0.25) node [enode] {$\leftarrow$};
\draw (8.75,-0.5) node [enode] {$k$};
\draw (10.25,-0.25) node [enode] {$\rightarrow$};
\draw (10.25,-0.5) node [enode] {$m$};
\draw (6, -1) node [color=black] {$G^{(3)}_{i,j:k:l,m}$};
\end{tikzpicture}
 \end{center}

We also set alias: 
${B'}^{(3)}_n=G^{(3)}_{1,1:(n-6):1,1}$,
${\bar B}^{(3)}_n=G^{(3)}_{1,1:(n-7):1,2}$,
and ${\tilde B}^{(3)}_n=G^{(3)}_{1,2:(n-8):1,2}$.

Note that any proper subgraphs of the hypergraphs listed in Theorem
\ref{t2} will have the spectral radius less than $\rho_3$. But not all
$3$-uniform hypergraphs with spectral radius less than $\rho_3$ come
in this way. Here is the complete classification.

\begin{theorem}
\label{t1}
  Let $\rho_3=2\sqrt[3]{4}$. If the spectral radius of a
connected $3$-uniform hypergraph $H$ is less than $\rho_3$,
then $H$ must be one of the following graphs:
\begin{enumerate}
\item $A_n^{(3)}$ for $n\geq 1$: a path of $n$ edges.
\begin{center}
\begin{tikzpicture}[thick, scale=0.8, bnode/.style={circle, draw,
    fill=black!50, inner sep=0pt, minimum width=4pt},enode/.style={color=red}]
\foreach \x in {0,1,...,7}
    {
    \path[fill=gray]  (\x,0) node [bnode] {} -- (\x+0.5,0.866) node [bnode] {} --(\x+1,0)--cycle;
}

\draw (8,0) node  [bnode] {};
\draw (8.5,0) node [color=black] {$...$};
\path[fill=gray]  (9,0) node [bnode] {} -- (9.5,0.866) node [bnode] {}
--(10,0) node [bnode] {} --cycle;
\end{tikzpicture}
\end{center}
\item $D_n^{(3)}$ for $n\geq 3$:  where $n$ is the number of edges.
\begin{center}
\begin{tikzpicture}[thick, scale=0.8, bnode/.style={circle, draw,
    fill=black!50, inner sep=0pt, minimum width=4pt}, enode/.style={}]
\foreach \x in {0,1,...,7}
    {
    \path[fill=gray]  (\x,0) node [bnode] {} -- (\x+0.5,0.866) node [bnode] {} --(\x+1,0)--cycle;
}
\path[fill=gray]  (1,0) -- (0.5,-0.866) node [bnode] {} --
    (1.5,-0.866)  node [bnode] {} --cycle;
\draw (8,0) node  [bnode] {};
\draw (8.5,0) node [enode]{$...$};
\path[fill=gray]  (9,0) node [bnode] {} -- (9.5,0.866) node [bnode] {}
--(10,0) node [bnode] {} --cycle;
\end{tikzpicture}
\end{center}

\item ${D'}_n^{(3)}$ for $n\geq 4$:  where $n$ is the number of edges.
\begin{center}
\begin{tikzpicture}[thick, scale=0.8, bnode/.style={circle, draw,
    fill=black!50, inner sep=0pt, minimum width=4pt}, enode/.style={}]
\foreach \x in {0,1,...,7}
    {
    \path[fill=gray]  (\x,0) node [bnode] {} -- (\x+0.5,0.866) node [bnode] {} --(\x+1,0)--cycle;
}
\path[fill=gray]  (1.5,0.866) -- (1,1.732) node [bnode] {} --
    (2,1.732)  node [bnode] {} --cycle;
\draw (8,0) node  [bnode] {};
\draw (8.5,0) node [enode]{$...$};
\path[fill=gray]  (9,0) node [bnode] {} -- (9.5,0.866) node [bnode] {}
--(10,0) node [bnode] {} --cycle;
\end{tikzpicture}
\end{center}

\item $B_n^{(3)}$ for $n\geq 5$, where $n$ is the number of edges.
\begin{center}
\begin{tikzpicture}[thick, scale=0.8, bnode/.style={circle, draw,
    fill=black!50, inner sep=0pt, minimum width=4pt}, enode/.style={}]
\path[fill=gray]  (2.5,0.866) -- (2,1.732) node [bnode] {} --
    (3,1.732)  node [bnode] {} --cycle;
\foreach \x in {0,1,...,7}
    {
    \path[fill=gray]  (\x,0) node [bnode] {} -- (\x+0.5,0.866) node [bnode] {} --(\x+1,0)--cycle;
}

\draw (8.5,0) node [enode]{$...$};
\path[fill=gray]  (9,0) node [bnode] {} -- (9.5,0.866) node [bnode] {}
--(10,0) node [bnode] {} --cycle;
\draw (8,0) node  [bnode] {};
\end{tikzpicture}
\end{center}

\item ${B'}_n^{(3)}$ for $n\geq 6$, where $n$ is the number of edges.
\begin{center}
\begin{tikzpicture}[thick, scale=0.8, bnode/.style={circle, draw,
    fill=black!50, inner sep=0pt, minimum width=4pt}, enode/.style={}]
\path[fill=gray]  (2.5,0.866) -- (2,1.732) node [bnode] {} --
    (3,1.732)  node [bnode] {} --cycle;
\path[fill=gray]  (8.5,0.866) -- (8,1.732) node [bnode] {} --
    (9,1.732)  node [bnode] {} --cycle;
\foreach \x in {1,2,...,6}
    {
    \path[fill=gray]  (\x,0) node [bnode] {} -- (\x+0.5,0.866) node [bnode] {} --(\x+1,0)--cycle;
}

\draw (7.5,0) node [enode]{$...$};
\path[fill=gray]  (8,0) node [bnode] {} -- (8.5,0.866) node [bnode] {}
--(9,0) node [bnode] {} --cycle;
\path[fill=gray]  (9,0) node [bnode] {} -- (9.5,0.866) node [bnode] {}
--(10,0) node [bnode] {} --cycle;
\draw (7,0) node  [bnode] {};

\end{tikzpicture}
\end{center}

\item $\bar B_n^{(3)}$ for $n\geq 7$, where $n$ is the number of edges.
\begin{center}
\begin{tikzpicture}[thick, scale=0.8, bnode/.style={circle, draw,
    fill=black!50, inner sep=0pt, minimum width=4pt}, enode/.style={}]
\path[fill=gray]  (2.5,0.866) -- (2,1.732) node [bnode] {} --
    (3,1.732)  node [bnode] {} --cycle;
\path[fill=gray]  (8.5,0.866) -- (8,1.732) node [bnode] {} --
    (9,1.732)  node [bnode] {} --cycle;
\foreach \x in {0,1,...,6}
    {
    \path[fill=gray]  (\x,0) node [bnode] {} -- (\x+0.5,0.866) node [bnode] {} --(\x+1,0)--cycle;
}

\draw (7.5,0) node [enode]{$...$};
\path[fill=gray]  (8,0) node [bnode] {} -- (8.5,0.866) node [bnode] {}
--(9,0) node [bnode] {} --cycle;
\path[fill=gray]  (9,0) node [bnode] {} -- (9.5,0.866) node [bnode] {}
--(10,0) node [bnode] {} --cycle;
\draw (7,0) node  [bnode] {};
\end{tikzpicture}
\end{center}

\item $BD_n^{(3)}$ for $n\geq 5$, where $n$ is the number of edges.
\begin{center}
\begin{tikzpicture}[thick, scale=0.8, bnode/.style={circle, draw,
    fill=black!50, inner sep=0pt, minimum width=4pt}, enode/.style={}]
\path[fill=gray]  (1,0) -- (0.5,-0.866) node [bnode] {} --
    (1.5,-0.866)  node [bnode] {} --cycle;
\path[fill=gray]  (8.5,0.866) -- (8,1.732) node [bnode] {} --
    (9,1.732)  node [bnode] {} --cycle;
\foreach \x in {0,1,...,6}
    {
    \path[fill=gray]  (\x,0) node [bnode] {} -- (\x+0.5,0.866) node [bnode] {} --(\x+1,0)--cycle;
}

\draw (7.5,0) node [enode]{$...$};
\path[fill=gray]  (8,0) node [bnode] {} -- (8.5,0.866) node [bnode] {}
--(9,0) node [bnode] {} --cycle;
\path[fill=gray]  (9,0) node [bnode] {} -- (9.5,0.866) node [bnode] {}
--(10,0) node [bnode] {} --cycle;
\draw (7,0) node  [bnode] {};
\end{tikzpicture}
\end{center}

\item Thirty-one exceptional $3$-uniform hypergraphs: $E^{(3)}_6$, $E^{(3)}_7$,
  $E^{(3)}_8$, $F^{(3)}_{2,3,3}$, $F^{(3)}_{2,2,k}$ (for $2\leq k\leq 6$),  $F^{(3)}_{1,3,k}$ (for
  $3\leq k\leq 13$), $F^{(3)}_{1,4,k}$ (for $4\leq k\leq 7$), $F^{(3)}_{1,5,5}$,
 and $G^{(3)}_{1,1:k:1,3}$ (for $0\leq k\leq 5$).
\end{enumerate}
\end{theorem}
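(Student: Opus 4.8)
The plan is to prove the two inclusions separately, using Theorem \ref{t2} (the classification at $\rho=\rho_3$) as the engine. The easy direction is that every graph on the list has $\rho<\rho_3$. Here I would argue that each listed hypergraph is a \emph{proper} subgraph of a hypergraph of Theorem \ref{t2}, so that Lemma \ref{subgraph} forces strict inequality. The exceptional graphs are arranged exactly for this: each is obtained by deleting an edge from an exceptional graph of spectral radius $\rho_3$, e.g. $F^{(3)}_{2,2,k}\subset F^{(3)}_{2,2,7}$ for $2\le k\le 6$, $F^{(3)}_{1,3,k}\subset F^{(3)}_{1,3,14}$ for $3\le k\le 13$, $F^{(3)}_{1,4,k}\subset F^{(3)}_{1,4,8}$, $F^{(3)}_{1,5,5}\subset F^{(3)}_{1,5,6}$, $F^{(3)}_{2,3,3}\subset F^{(3)}_{2,3,4}$, $G^{(3)}_{1,1:k:1,3}\subset G^{(3)}_{1,1:6:1,3}$ for $0\le k\le 5$, and $E^{(3)}_{1,2,2},E^{(3)}_{1,2,3},E^{(3)}_{1,2,4}$ inside $\tilde E^{(3)}_6,\tilde E^{(3)}_7,\tilde E^{(3)}_8$. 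For the infinite families I would either embed a member properly in a member of a corresponding extended family (e.g. $A^{(3)}_n\subset C^{(3)}_{n+1}$) or, more uniformly, write down an explicit strictly $\tfrac14$-subnormal incidence matrix with at least one slack vertex and invoke Lemma \ref{subnormal}; the all-$\tfrac12$ labeling used for $A^{(3)}_n$ in Section 2 is the prototype.

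For the hard direction I would argue by forbidden subgraphs. If $\rho(H)<\rho_3$, then by Lemma \ref{subgraph} no connected subgraph of $H$ has spectral radius $\ge\rho_3$, so $H$ contains none of the hypergraphs of Theorem \ref{t2}. Two structural reductions follow immediately. First, since every cycle $C^{(3)}_n$ with $n\ge 2$ is forbidden (including the $2$-cycle $C^{(3)}_2$), $H$ is acyclic and hence a simple hypertree. Second, since $S^{(3)}_4$ is forbidden and in a simple hypertree any four edges through a common vertex span a copy of $S^{(3)}_4$, every vertex of $H$ has degree at most $3$.

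With $H$ a hypertree of maximum degree $\le 3$, I would then locate the vertices of degree $3$ and the edges carrying pendant ``branches,'' and bound their number, positions, and branch-lengths by excluding the remaining obstructions: the extended families $\tilde D^{(3)}_n,\tilde B^{(3)}_n,\widetilde{BD}^{(3)}_n$ and the sporadic graphs $\tilde E^{(3)}_6=E^{(3)}_{2,2,2}$, $\tilde E^{(3)}_7=E^{(3)}_{1,3,3}$, $\tilde E^{(3)}_8=E^{(3)}_{1,2,5}$, $F^{(3)}_{2,3,4}$, $F^{(3)}_{2,2,7}$, $F^{(3)}_{1,5,6}$, $F^{(3)}_{1,4,8}$, $F^{(3)}_{1,3,14}$, $G^{(3)}_{1,1:0:1,4}$, and $G^{(3)}_{1,1:6:1,3}$. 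This mirrors the classical Dynkin analysis: forbidding the $\tilde E$'s forces at most one branching configuration and caps the branch lengths, while forbidding the extended and $G$-type obstructions pins down how pendant paths may attach. What survives is exactly the families $A^{(3)}_n,D^{(3)}_n,{D'}^{(3)}_n,B^{(3)}_n,{B'}^{(3)}_n,\bar B^{(3)}_n,BD^{(3)}_n$ together with the thirty-one exceptions. Throughout, the contraction operation of Lemma \ref{l:contraction} can be used to simplify a $2$-bridge and reduce to a smaller already-classified case.

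The main obstacle is the completeness of this last case analysis. Unlike the graph setting, a degree-$3$ vertex of a $3$-uniform hypertree comes in several inequivalent types according to how the three incident edges share it and whether a pendant path attaches at the ``apex'' of an edge or at one of its two base vertices; moreover a single edge may carry branches at two distinct vertices. This genuinely multiplies the admissible shapes beyond Smith's list and is precisely what produces the $F^{(3)}_{i,j,k}$ and $G^{(3)}_{i,j:k:l,m}$ families. I would control it by fixing a longest path in $H$, enumerating the admissible attachments along and at its ends, and testing each candidate against the forbidden list; the delicate point is verifying that every admissible shape is one of the named graphs and that no stray sporadic example escapes. For the finitely many borderline cases this is settled by a direct computation: exhibit a strictly $\tfrac14$-subnormal labeling (Lemma \ref{subnormal}) to confirm $\rho<\rho_3$, or a strictly and consistently $\tfrac14$-supernormal labeling (Lemma \ref{supernormal}) to rule a candidate out.
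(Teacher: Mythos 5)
Your overall architecture matches the paper's (which proves Theorems \ref{t2} and \ref{t1} in one combined argument): forbidden subgraphs from the $\rho=\rho_3$ list, reduction to a hypertree of maximum degree $3$, a case analysis on degree-$3$ vertices and branching edges, and supernormal/subnormal certificates plus contraction for the residual cases. But there is one concrete false step in your ``easy'' direction: $G^{(3)}_{1,1:k:1,3}$ for $0\leq k\leq 5$ is \emph{not} a subgraph of $G^{(3)}_{1,1:6:1,3}$. The parameter $k$ is the length of the path separating the two branching edges; deleting edges from $G^{(3)}_{1,1:6:1,3}$ cannot shorten that separation without disconnecting the graph, so Lemma \ref{subgraph} gives you nothing here. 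This is exactly why the paper singles these six graphs out: it notes that $G^{(3)}_{1,1:(k+1):1,3}$ is an \emph{expansion} of $G^{(3)}_{1,1:k:1,3}$ at a cut vertex, applies Lemma \ref{l:contraction} iteratively to conclude that $\rho(G^{(3)}_{1,1:k:1,3})\geq\rho_3$ would force $\rho(G^{(3)}_{1,1:6:1,3})\geq\rho_3$ with the even-split condition ($\tfrac12$ weights) at the expanded vertices, and then checks against the explicit $\tfrac14$-normal labeling of $G^{(3)}_{1,1:6:1,3}$ that no such $\tfrac12$ labels occur. You need this (or an explicit strictly $\tfrac14$-subnormal labeling of each $G^{(3)}_{1,1:k:1,3}$) to close the gap; the remaining containments you list ($F^{(3)}_{2,2,k}\subset F^{(3)}_{2,2,7}$, etc.) are all genuine and handled exactly as you say.

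A secondary caution on the hard direction: excluding the Theorem \ref{t2} graphs as subgraphs is not by itself sufficient. For instance $F^{(3)}_{3,3,3}$, $G^{(3)}_{1,1:0:2,2}$, the configurations with three branching edges, and a degree-$3$ vertex adjacent to a branching edge all avoid every graph on the $\rho=\rho_3$ list, yet must be excluded; the paper does this with fresh strictly $\tfrac14$-supernormal labelings of suitable \emph{contractions}, and Lemma \ref{l:contraction} then kills the infinitely many shapes obtained by lengthening the connecting paths. You gesture at both ingredients, but describing them as ``finitely many borderline cases'' settled by direct computation understates that the contraction step is what reduces infinitely many candidates to finitely many certificates; without it the case analysis does not terminate.
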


\begin{proof}[Proof of Theorem \ref{t2} and Theorem \ref{t1}:]
We first show that the hypergraphs listed in Theorem \ref{t2} have
the spectral radius $\rho_3$. This is done by showing that they are
all consistently $\frac{1}{4}$-normal. We label the value $B(v,e)$ at
vertex $v$ near the side of edge $e$. If $v$ is a leaf vertex, then it
has the trivial value $1$, so we will omit its labeling.


\begin{minipage}{0.3\linewidth}
\begin{center}
\begin{tikzpicture}[thick, scale=0.8, bnode/.style={circle, draw,
    fill=black!50, inner sep=0pt, minimum width=4pt}, enode/.style={color=red}]
\foreach \x in {-60,-30,...,240}
    {
    \path[fill=gray]  (\x-15:2) node [bnode] {} -- (\x:3) node [bnode] {} --(\x+15:2)--cycle;
}
\draw (270:2) node  [color=black] {$\cdots$};
\draw (255:2) node [bnode] {};
\draw (0,0) node [color=black] {$C^{(3)}_n$};
\draw (0,-3) node [color=black] {$\frac{1}{2}$ for all non-leaf vertices};
\draw (1.75,0.75) node [enode] {$\frac{1}{2}$};
\draw (1.85,0.25) node [enode] {$\frac{1}{2}$};
\end{tikzpicture}
\end{center}
\end{minipage}
\hfil
\begin{minipage}{0.65\linewidth}
\begin{center}
\begin{tikzpicture}[thick, scale=0.8, bnode/.style={circle, draw,
    fill=black!50, inner sep=0pt, minimum width=4pt}, enode/.style={color=red}]
\foreach \x in {0,1,...,6}
    {
    \path[fill=gray]  (\x,0) node [bnode] {} -- (\x+0.5,0.866) node [bnode] {} --(\x+1,0)--cycle;
}
\draw (0.55,0) node [enode] {$\frac{1}{4}$};
\draw (1.25,0) node [enode] {$\frac{1}{2}$};
\draw (2.25,0) node [enode] {$\frac{1}{2}$};
\draw (3.25,0) node [enode] {$\frac{1}{2}$};
\draw (4.25,0) node [enode] {$\frac{1}{2}$};
\draw (5.25,0) node [enode] {$\frac{1}{2}$};
\draw (6.25,0) node [enode] {$\frac{1}{2}$};
\draw (1.75,0) node [enode] {$\frac{1}{2}$};
\draw (2.75,0) node [enode] {$\frac{1}{2}$};
\draw (3.75,0) node [enode] {$\frac{1}{2}$};
\draw (4.75,0) node [enode] {$\frac{1}{2}$};
\draw (5.75,0) node [enode] {$\frac{1}{2}$};
\draw (6.75,0) node [enode] {$\frac{1}{2}$};

\draw (7.5,0) node [color=black] {$\cdots$};
\foreach \x in {8,9}
    {
    \path[fill=gray]  (\x,0) node [bnode] {} -- (\x+0.5,0.866) node [bnode] {} --(\x+1,0)--cycle;
}

\draw (7,0) node  [bnode] {};
\draw (10,0) node  [bnode] {};

\foreach \x in {1,9}
    {
    \path[fill=gray]  (\x,0) -- (\x-0.5,-0.866) node [bnode] {} --
    (\x+0.5,-0.866)  node [bnode] {} --cycle;
}
\draw (0.75,-0.25) node [enode] {$\frac{1}{4}$};
\draw (5,-1) node [color=black] {$\tilde D^{(3)}_n$};
\draw (8.75,-0.5) node [enode] {$\frac{1}{4}$};
\draw (9.25,0) node [enode] {$\frac{1}{4}$};
\draw (8.75,0) node [enode] {$\frac{1}{2}$};
\draw (8.25,0) node [enode] {$\frac{1}{2}$};
\end{tikzpicture}\\
\begin{tikzpicture}[thick, scale=0.8, bnode/.style={circle, draw,
    fill=black!50, inner sep=0pt, minimum width=4pt}, enode/.style={color=red}]
\path[fill=gray]  (2.5,0.866) -- (2,1.732) node [bnode] {} --
    (3,1.732)  node [bnode] {} --cycle;
\path[fill=gray]  (7.5,0.866) -- (7,1.732) node [bnode] {} --
    (8,1.732)  node [bnode] {} --cycle;
\foreach \x in {0,1,...,5}
    {
    \path[fill=gray]  (\x,0) node [bnode] {} -- (\x+0.5,0.866) node [bnode] {} --(\x+1,0)--cycle;
}

\draw (6.5,0) node [color=black] {$\cdots$};
\foreach \x in {7,8,9}
    {
    \path[fill=gray]  (\x,0) node [bnode] {} -- (\x+0.5,0.866) node [bnode] {} --(\x+1,0)--cycle;
}

\draw (6,0) node  [bnode] {};
\draw (10,0) node  [bnode] {};
\draw (5,-1) node [color=black] {$\tilde B^{(3)}_n$};
\draw (0.75,0) node [enode] {$\frac{1}{4}$};
\draw (1.25,0) node [enode] {$\frac{3}{4}$};
\draw (1.75,0) node [enode] {$\frac{1}{3}$};
\draw (2.25,0) node [enode] {$\frac{2}{3}$};
\draw (2.75,0) node [enode] {$\frac{1}{2}$};
\draw (2.25,0.65) node [enode] {$\frac{3}{4}$};
\draw (2.75,1.25) node [enode] {$\frac{1}{4}$};
\draw (3.25,0) node [enode] {$\frac{1}{2}$};
\draw (3.75,0) node [enode] {$\frac{1}{2}$};
\draw (4.75,0) node [enode] {$\frac{1}{2}$};
\draw (4.25,0) node [enode] {$\frac{1}{2}$};
\draw (5.25,0) node [enode] {$\frac{1}{2}$};
\draw (5.75,0) node [enode] {$\frac{1}{2}$};

\draw (7.25,0) node [enode] {$\frac{1}{2}$};
\draw (7.75,0) node [enode] {$\frac{2}{3}$};
\draw (8.75,0) node [enode] {$\frac{3}{4}$};
\draw (8.25,0) node [enode] {$\frac{1}{3}$};
\draw (9.25,0) node [enode] {$\frac{1}{4}$};
\draw (7.25,0.65) node [enode] {$\frac{3}{4}$};
\draw (7.75,1.25) node [enode] {$\frac{1}{4}$};
\end{tikzpicture}\\

\begin{tikzpicture}[thick, scale=0.8, bnode/.style={circle, draw,
    fill=black!50, inner sep=0pt, minimum width=4pt}, enode/.style={color=red}]

\path[fill=gray]  (7.5,0.866) -- (7,1.732) node [bnode] {} --
    (8,1.732)  node [bnode] {} --cycle;

\path[fill=gray]  (1,0) -- (0.5,-0.866) node [bnode] {} --
    (1.5,-0.866)  node [bnode] {} --cycle;

\foreach \x in {0,1,...,5}
    {
    \path[fill=gray]  (\x,0) node [bnode] {} -- (\x+0.5,0.866) node [bnode] {} --(\x+1,0)--cycle;
}

\draw (6.5,0) node [color=black] {$\cdots$};
\foreach \x in {7,8,9}
    {
    \path[fill=gray]  (\x,0) node [bnode] {} -- (\x+0.5,0.866) node [bnode] {} --(\x+1,0)--cycle;
}

\draw (6,0) node  [bnode] {};
\draw (10,0) node  [bnode] {};
\draw (10,0) node  [bnode] {};
\draw (5,-1) node [color=black] {$\widetilde{BD}_n^{(3)}$};
\draw (0.75,0) node [enode] {$\frac{1}{4}$};
\draw (1.25,0) node [enode] {$\frac{1}{2}$};
\draw (0.75,-0.55) node [enode] {$\frac{1}{4}$};

\draw (1.75,0) node [enode] {$\frac{1}{2}$};
\draw (2.75,0) node [enode] {$\frac{1}{2}$};
\draw (3.75,0) node [enode] {$\frac{1}{2}$};
\draw (4.75,0) node [enode] {$\frac{1}{2}$};
\draw (2.25,0) node [enode] {$\frac{1}{2}$};
\draw (3.25,0) node [enode] {$\frac{1}{2}$};
\draw (4.25,0) node [enode] {$\frac{1}{2}$};
\draw (5.25,0) node [enode] {$\frac{1}{2}$};
\draw (5.75,0) node [enode] {$\frac{1}{2}$};
\draw (7.75,0) node [enode] {$\frac{2}{3}$};
\draw (8.75,0) node [enode] {$\frac{3}{4}$};
\draw (7.25,0) node [enode] {$\frac{1}{2}$};
\draw (8.25,0) node [enode] {$\frac{1}{3}$};
\draw (9.25,0) node [enode] {$\frac{1}{4}$};
\draw (7.25,0.55) node [enode] {$\frac{3}{4}$};
\draw (7.75,1.25) node [enode] {$\frac{1}{4}$};
\end{tikzpicture}
\end{center}
\end{minipage}

\begin{center}
\begin{tikzpicture}[thick, scale=0.7, bnode/.style={circle, draw,
    fill=black!50, inner sep=0pt, minimum width=4pt}, enode/.style={color=red}]
\path[fill=gray]  (2,0) -- (1.5,-0.866) node [bnode] {} --
    (2.5,-0.866)  node [bnode] {} --cycle;
\path[fill=gray]  (1.5,-0.866) -- (2,-1.732) node [bnode] {} --
    (1,-1.732)  node [bnode] {} --cycle;
\foreach \x in {0,1,...,3}
    {
    \path[fill=gray]  (\x,0) node [bnode] {} -- (\x+0.5,0.866) node [bnode] {} --(\x+1,0)--cycle;
}
\draw (4,0) node  [bnode] {};
\draw (3,-1) node [color=black] {$\tilde E_6^{(3)}$};
\draw (0.75,0) node [enode] {$\frac{1}{4}$};
\draw (1.75,0) node [enode] {$\frac{1}{3}$};
\draw (2.75,0) node [enode] {$\frac{3}{4}$};
\draw (1.25,0) node [enode] {$\frac{3}{4}$};
\draw (2.25,0) node [enode] {$\frac{1}{3}$};
\draw (3.25,0) node [enode] {$\frac{1}{4}$};
\draw (2.05,-0.5) node [enode] {$\frac{1}{3}$};
\draw (1.85,-1) node [enode] {$\frac{3}{4}$};
\draw (1.25,-1.25) node [enode] {$\frac{1}{4}$};
\end{tikzpicture}
\hfil
\begin{tikzpicture}[thick, scale=0.7, bnode/.style={circle, draw,
    fill=black!50, inner sep=0pt, minimum width=4pt}, enode/.style={color=red}]
\path[fill=gray]  (3,0) -- (2.5,-0.866) node [bnode] {} --
    (3.5,-0.866)  node [bnode] {} --cycle;
\foreach \x in {0,1,...,5}
    {
    \path[fill=gray]  (\x,0) node [bnode] {} -- (\x+0.5,0.866) node [bnode] {} --(\x+1,0)--cycle;
}

\draw (6,0) node  [bnode] {};
\draw (4,-1) node [color=black] {$\tilde E_7^{(3)}$};
\draw (0.75,0) node [enode] {$\frac{1}{4}$};
\draw (1.75,0) node [enode] {$\frac{1}{3}$};
\draw (2.75,0) node [enode] {$\frac{3}{8}$};
\draw (3.75,0) node [enode] {$\frac{2}{3}$};
\draw (4.75,0) node [enode] {$\frac{3}{4}$};
\draw (1.25,0) node [enode] {$\frac{3}{4}$};
\draw (2.25,0) node [enode] {$\frac{2}{3}$};
\draw (3.25,0) node [enode] {$\frac{3}{8}$};
\draw (4.25,0) node [enode] {$\frac{1}{3}$};
\draw (5.25,0) node [enode] {$\frac{1}{4}$};
\draw (3,-0.45) node [enode] {$\frac{1}{4}$};
\end{tikzpicture}
\hfil
\begin{tikzpicture}[thick, scale=0.7, bnode/.style={circle, draw,
    fill=black!50, inner sep=0pt, minimum width=4pt}, enode/.style={color=red}]
\path[fill=gray]  (2,0) -- (1.5,-0.866) node [bnode] {} --
    (2.5,-0.866)  node [bnode] {} --cycle;
\foreach \x in {0,1,...,6}
    {
    \path[fill=gray]  (\x,0) node [bnode] {} -- (\x+0.5,0.866) node [bnode] {} --(\x+1,0)--cycle;
}

\draw (7,0) node  [bnode] {};
\draw (4,-1) node [color=black] {$\tilde E_8^{(3)}$};
\draw (1.25,0) node [enode] {$\frac{3}{4}$};
\draw (2.25,0) node [enode] {$\frac{5}{12}$};
\draw (3.25,0) node [enode] {$\frac{2}{5}$};
\draw (4.25,0) node [enode] {$\frac{3}{8}$};
\draw (5.25,0) node [enode] {$\frac{1}{3}$};
\draw (6.25,0) node [enode] {$\frac{1}{4}$};
\draw (0.75,0) node [enode] {$\frac{1}{4}$};
\draw (1.75,0) node [enode] {$\frac{1}{3}$};
\draw (2.75,0) node [enode] {$\frac{3}{5}$};
\draw (3.75,0) node [enode] {$\frac{5}{8}$};
\draw (4.75,0) node [enode] {$\frac{2}{3}$};
\draw (5.75,0) node [enode] {$\frac{3}{4}$};
\draw (2,-0.45) node [enode] {$\frac{1}{4}$};

\end{tikzpicture}
\\
\begin{tikzpicture}[thick, scale=0.7, bnode/.style={circle, draw,
    fill=black!50, inner sep=0pt, minimum width=4pt}, enode/.style={color=red}]
\path[fill=gray]  (3.5,0.866) -- (3,1.732) node [bnode] {} --
    (4,1.732)  node [bnode] {} --cycle;
\path[fill=gray]  (3,1.732) -- (2.5,2.598) node [bnode] {} --
   (3.5,2.598)  node [bnode] {} --cycle;
\foreach \x in {0,1,...,7}
    {
    \path[fill=gray]  (\x,0) node [bnode] {} -- (\x+0.5,0.866) node [bnode] {} --(\x+1,0)--cycle;
}

\draw (8,0) node  [bnode] {};
\draw (4,-1) node [color=black] {$F_{2,3,4}^{(3)}$};
\draw (0.75,0) node [enode] {$\frac{1}{4}$};
\draw (1.75,0) node [enode] {$\frac{1}{3}$};
\draw (2.75,0) node [enode] {$\frac{3}{8}$};
\draw (3.75,0) node [enode] {$\frac{3}{5}$};
\draw (4.75,0) node [enode] {$\frac{5}{8}$};
\draw (5.75,0) node [enode] {$\frac{2}{3}$};
\draw (6.75,0) node [enode] {$\frac{3}{4}$};

\draw (1.25,0) node [enode] {$\frac{3}{4}$};
\draw (2.25,0) node [enode] {$\frac{2}{3}$};
\draw (3.25,0) node [enode] {$\frac{5}{8}$};
\draw (4.25,0) node [enode] {$\frac{2}{5}$};
\draw (5.25,0) node [enode] {$\frac{3}{8}$};
\draw (6.25,0) node [enode] {$\frac{1}{3}$};
\draw (7.25,0) node [enode] {$\frac{1}{4}$};
\draw (2.75,2.05) node [enode] {$\frac{1}{4}$};
\draw (3.25,1.55) node [enode] {$\frac{3}{4}$};
\draw (3.75,1.05) node [enode] {$\frac{1}{3}$};
\draw (3.25,0.75) node [enode] {$\frac{2}{3}$};

\end{tikzpicture}
\hfil
\begin{tikzpicture}[thick, scale=0.7, bnode/.style={circle, draw,
    fill=black!50, inner sep=0pt, minimum width=4pt}, enode/.style={color=red}]
\path[fill=gray]  (2.5,0.866) -- (2,1.732) node [bnode] {} --
    (3,1.732)  node [bnode] {} --cycle;
\path[fill=gray]  (2,1.732) -- (1.5,2.598) node [bnode] {} --
   (2.5,2.598)  node [bnode] {} --cycle;
\foreach \x in {0,1,...,9}
    {
    \path[fill=gray]  (\x,0) node [bnode] {} -- (\x+0.5,0.866) node [bnode] {} --(\x+1,0)--cycle;
}

\draw (10,0) node  [bnode] {};
\draw (4,-1) node [color=black] {$ F_{2,2,7}^{(3)}$};

\draw (0.75,0) node [enode] {$\frac{1}{4}$};
\draw (1.75,0) node [enode] {$\frac{1}{3}$};
\draw (2.75,0) node [enode] {$\frac{9}{16}$};
\draw (3.75,0) node [enode] {$\frac{4}{7}$};
\draw (4.75,0) node [enode] {$\frac{7}{12}$};
\draw (5.75,0) node [enode] {$\frac{3}{5}$};
\draw (6.75,0) node [enode] {$\frac{5}{8}$};
\draw (7.75,0) node [enode] {$\frac{2}{3}$};
\draw (8.75,0) node [enode] {$\frac{3}{4}$};
\draw (1.25,0) node [enode] {$\frac{3}{4}$};
\draw (2.25,0) node [enode] {$\frac{2}{3}$};
\draw (3.25,0) node [enode] {$\frac{7}{16}$};
\draw (4.25,0) node [enode] {$\frac{3}{7}$};
\draw (5.25,0) node [enode] {$\frac{5}{12}$};
\draw (6.25,0) node [enode] {$\frac{2}{5}$};
\draw (7.25,0) node [enode] {$\frac{3}{8}$};
\draw (8.25,0) node [enode] {$\frac{1}{3}$};
\draw (9.25,0) node [enode] {$\frac{1}{4}$};
\draw (1.75,2.05) node [enode] {$\frac{1}{4}$};
\draw (2.18,1.45) node [enode] {$\frac{3}{4}$};
\draw (2.75,1.25) node [enode] {$\frac{1}{3}$};
\draw (2.35,0.65) node [enode] {$\frac{2}{3}$};
\end{tikzpicture}\\

\begin{tikzpicture}[thick, scale=0.7, bnode/.style={circle, draw,
    fill=black!50, inner sep=0pt, minimum width=4pt}, enode/.style={color=red}]
\path[fill=gray]  (0,0) -- (0.5,-0.866) node [bnode] {} --
    (1,0)   --cycle;
\path[fill=gray]  (0,0) node [bnode] {} -- (0.5, 0.866) node [bnode] {} --
    (1,0)  node [bnode] {} --cycle;
\draw (0.2, .4) node [enode] {$\frac{1}{2}$};
\draw (0.8, .4) node [enode] {$\frac{1}{2}$};
\draw (0.2, -.4) node [enode] {$\frac{1}{2}$};
\draw (0.8, -.4) node [enode] {$\frac{1}{2}$};

\draw (1,-1) node [color=black] {$C_2^{(3)}$};
\end{tikzpicture}
\hfil
\begin{tikzpicture}[thick, scale=0.7, bnode/.style={circle, draw,
    fill=black!50, inner sep=0pt, minimum width=4pt}, enode/.style={color=red}]
\path[fill=gray]  (5.5,0.866) -- (5,1.732) node [bnode] {} --
    (6,1.732)  node [bnode] {} --cycle;
\foreach \x in {0,1,...,11}
    {
    \path[fill=gray]  (\x,0) node [bnode] {} -- (\x+0.5,0.866) node [bnode] {} --(\x+1,0)--cycle;
}

\draw (12,0) node  [bnode] {};
\draw (4,-1) node [color=black] {$ F_{1,5,6}^{(3)}$};

\draw (0.75,0) node [enode] {$\frac{1}{4}$};
\draw (1.75,0) node [enode] {$\frac{1}{3}$};
\draw (2.75,0) node [enode] {$\frac{3}{8}$};
\draw (3.75,0) node [enode] {$\frac{2}{5}$};
\draw (4.75,0) node [enode] {$\frac{5}{12}$};
\draw (5.75,0) node [enode] {$\frac{4}{7}$};
\draw (6.75,0) node [enode] {$\frac{7}{12}$};
\draw (7.75,0) node [enode] {$\frac{3}{5}$};
\draw (8.75,0) node [enode] {$\frac{5}{8}$};
\draw (9.75,0) node [enode] {$\frac{2}{3}$};
\draw (10.75,0) node [enode] {$\frac{3}{4}$};
\draw (1.25,0) node [enode] {$\frac{3}{4}$};
\draw (2.25,0) node [enode] {$\frac{2}{3}$};
\draw (3.25,0) node [enode] {$\frac{5}{8}$};
\draw (4.25,0) node [enode] {$\frac{3}{5}$};
\draw (5.25,0) node [enode] {$\frac{7}{12}$};
\draw (6.25,0) node [enode] {$\frac{3}{7}$};
\draw (7.25,0) node [enode] {$\frac{5}{12}$};
\draw (8.25,0) node [enode] {$\frac{2}{5}$};
\draw (9.25,0) node [enode] {$\frac{3}{8}$};
\draw (10.25,0) node [enode] {$\frac{1}{3}$};
\draw (11.25,0) node [enode] {$\frac{1}{4}$};
\draw (5.75,1.25) node [enode] {$\frac{1}{4}$};
\draw (5.68,0.65) node [enode] {$\frac{3}{4}$};
\end{tikzpicture}
\\

\begin{tikzpicture}[thick, scale=0.7, bnode/.style={circle, draw,
    fill=black!50, inner sep=0pt, minimum width=4pt}, enode/.style={color=red}]
\foreach \x in {0,90,...,270}
    {
    \path[fill=gray]  (0,0) -- (\x-30:1) node [bnode] {} --(\x+30:1) node
    [bnode] {} --cycle;
}
\draw (0,0) node  [bnode] {};
\draw (1,-1) node [color=black] {$ S_4^{(3)}$};
\draw (0.25,0) node [enode] {$\frac{1}{4}$};
\draw (0,0.5) node [enode] {$\frac{1}{4}$};
\draw (-0.25,0) node [enode] {$\frac{1}{4}$};
\draw (0,-0.5) node [enode] {$\frac{1}{4}$};
\end{tikzpicture}
\hfil
\begin{tikzpicture}[thick, scale=0.7, bnode/.style={circle, draw,
    fill=black!50, inner sep=0pt, minimum width=4pt}, enode/.style={color=red}]
\path[fill=gray]  (4.5,0.866) -- (4,1.732) node [bnode] {} --
    (5,1.732)  node [bnode] {} --cycle;
\foreach \x in {0,1,...,12}
    {
    \path[fill=gray]  (\x,0) node [bnode] {} -- (\x+0.5,0.866) node [bnode] {} --(\x+1,0)--cycle;
}

\draw (13,0) node  [bnode] {};
\draw (4,-1) node [color=black] {$ F_{1,4,8}^{(3)}$};
\draw (0.75,0) node [enode] {$\frac{1}{4}$};
\draw (1.75,0) node [enode] {$\frac{1}{3}$};
\draw (2.75,0) node [enode] {$\frac{3}{8}$};
\draw (3.75,0) node [enode] {$\frac{2}{5}$};
\draw (4.75,0) node [enode] {$\frac{5}{9}$};
\draw (5.75,0) node [enode] {$\frac{9}{16}$};
\draw (6.75,0) node [enode] {$\frac{4}{7}$};
\draw (7.75,0) node [enode] {$\frac{7}{12}$};
\draw (8.75,0) node [enode] {$\frac{3}{5}$};
\draw (9.75,0) node [enode] {$\frac{5}{8}$};
\draw (10.75,0) node [enode] {$\frac{2}{3}$};
\draw (11.75,0) node [enode] {$\frac{3}{4}$};
\draw (1.25,0) node [enode] {$\frac{3}{4}$};
\draw (2.25,0) node [enode] {$\frac{2}{3}$};
\draw (3.25,0) node [enode] {$\frac{5}{8}$};
\draw (4.25,0) node [enode] {$\frac{3}{5}$};
\draw (5.25,0) node [enode] {$\frac{4}{9}$};
\draw (6.25,0) node [enode] {$\frac{7}{16}$};
\draw (7.25,0) node [enode] {$\frac{3}{7}$};
\draw (8.25,0) node [enode] {$\frac{5}{12}$};
\draw (9.25,0) node [enode] {$\frac{2}{5}$};
\draw (10.25,0) node [enode] {$\frac{3}{8}$};
\draw (11.25,0) node [enode] {$\frac{1}{3}$};
\draw (12.25,0) node [enode] {$\frac{1}{4}$};
\draw (4.75,1.25) node [enode] {$\frac{1}{4}$};
\draw (4.68,0.65) node [enode] {$\frac{3}{4}$};
\end{tikzpicture}
\\

\begin{tikzpicture}[thick, scale=0.7, bnode/.style={circle, draw,
    fill=black!50, inner sep=0pt, minimum width=4pt}, enode/.style={color=red}]
\path[fill=gray]  (3.5,0.866) -- (3,1.732) node [bnode] {} --
    (4,1.732)  node [bnode] {} --cycle;
\foreach \x in {0,1,...,17}
    {
    \path[fill=gray]  (\x,0) node [bnode] {} -- (\x+0.5,0.866) node [bnode] {} --(\x+1,0)--cycle;
}

\draw (18,0) node  [bnode] {};
\draw (4,-1) node [color=black] {$ F_{1,3,14}^{(3)}$};
\draw (0.75,0) node [enode] {$\frac{1}{4}$};
\draw (1.75,0) node [enode] {$\frac{1}{3}$};
\draw (2.75,0) node [enode] {$\frac{3}{8}$};
\draw (3.75,0) node [enode] {$\frac{8}{15}$};
\draw (4.75,0) node [enode] {$\frac{15}{28}$};
\draw (5.75,0) node [enode] {$\frac{7}{13}$};
\draw (6.75,0) node [enode] {$\frac{13}{24}$};
\draw (7.75,0) node [enode] {$\frac{6}{11}$};
\draw (8.75,0) node [enode] {$\frac{11}{20}$};
\draw (1.25,0) node [enode] {$\frac{3}{4}$};
\draw (2.25,0) node [enode] {$\frac{2}{3}$};
\draw (3.25,0) node [enode] {$\frac{5}{8}$};
\draw (4.25,0) node [enode] {$\frac{7}{15}$};
\draw (5.25,0) node [enode] {$\frac{13}{28}$};
\draw (6.25,0) node [enode] {$\frac{6}{13}$};
\draw (7.25,0) node [enode] {$\frac{11}{24}$};
\draw (8.25,0) node [enode] {$\frac{5}{11}$};
\draw (9.25,0) node [enode] {$\frac{9}{20}$};
\draw (9.75,0) node [enode] {$\frac{5}{9}$};
\draw (10.75,0) node [enode] {$\frac{9}{16}$};
\draw (11.75,0) node [enode] {$\frac{4}{7}$};
\draw (12.75,0) node [enode] {$\frac{7}{12}$};
\draw (13.75,0) node [enode] {$\frac{3}{5}$};
\draw (14.75,0) node [enode] {$\frac{5}{8}$};
\draw (15.75,0) node [enode] {$\frac{2}{3}$};
\draw (16.75,0) node [enode] {$\frac{3}{4}$};

\draw (10.25,0) node [enode] {$\frac{4}{9}$};
\draw (11.25,0) node [enode] {$\frac{7}{16}$};
\draw (12.25,0) node [enode] {$\frac{3}{7}$};
\draw (13.25,0) node [enode] {$\frac{5}{12}$};
\draw (14.25,0) node [enode] {$\frac{2}{5}$};
\draw (15.25,0) node [enode] {$\frac{3}{8}$};
\draw (16.25,0) node [enode] {$\frac{1}{3}$};
\draw (17.25,0) node [enode] {$\frac{1}{4}$};
\draw (3.75,1.25) node [enode] {$\frac{1}{4}$};
\draw (3.68,0.65) node [enode] {$\frac{3}{4}$};
\end{tikzpicture}
\\

\begin{tikzpicture}[thick, scale=0.7, bnode/.style={circle, draw,
    fill=black!50, inner sep=0pt, minimum width=4pt},enode/.style={color=red}]
\path[fill=gray]  (1.5,0.866) -- (1,1.732) node [bnode] {} --
    (1.8,1.732)  node [bnode] {} --cycle;
\path[fill=gray]  (2.5,0.866) -- (2.2,1.732) node [bnode] {} --
    (3,1.732)  node [bnode] {} --cycle;

\foreach \x in {0,1,...,6}
    {
    \path[fill=gray]  (\x,0) node [bnode] {} -- (\x+0.5,0.866) node [bnode] {} --(\x+1,0)--cycle;
}

\draw (7,0) node  [bnode] {};
\draw (4,-1) node [color=black] {$ G_{1,1:0:1,4}^{(3)}$};
\draw (0.75,0) node [enode] {$\frac{1}{4}$};
\draw (1.75,0) node [enode] {$\frac{4}{9}$};
\draw (2.75,0) node [enode] {$\frac{3}{5}$};
\draw (3.75,0) node [enode] {$\frac{5}{8}$};
\draw (4.75,0) node [enode] {$\frac{2}{3}$};
\draw (5.75,0) node [enode] {$\frac{3}{4}$};
\draw (1.25,0) node [enode] {$\frac{3}{4}$};
\draw (2.25,0) node [enode] {$\frac{5}{9}$};
\draw (3.25,0) node [enode] {$\frac{2}{5}$};
\draw (4.25,0) node [enode] {$\frac{3}{8}$};
\draw (5.25,0) node [enode] {$\frac{1}{3}$};
\draw (6.25,0) node [enode] {$\frac{1}{4}$};
\draw (1.75,1.25) node [enode] {$\frac{1}{4}$};
\draw (1.68,0.65) node [enode] {$\frac{3}{4}$};
\draw (2.75,1.25) node [enode] {$\frac{1}{4}$};
\draw (2.68,0.65) node [enode] {$\frac{3}{4}$};
\end{tikzpicture}
\hfil
\begin{tikzpicture}[thick, scale=0.7, bnode/.style={circle, draw,
    fill=black!50, inner sep=0pt, minimum width=4pt}, enode/.style={color=red}]
\path[fill=gray]  (1.5,0.866) -- (1,1.732) node [bnode] {} --
    (2,1.732)  node [bnode] {} --cycle;
\path[fill=gray]  (8.5,0.866) -- (8,1.732) node [bnode] {} --
    (9,1.732)  node [bnode] {} --cycle;

\foreach \x in {0,1,...,11}
    {
    \path[fill=gray]  (\x,0) node [bnode] {} -- (\x+0.5,0.866) node [bnode] {} --(\x+1,0)--cycle;
}

\draw (12,0) node  [bnode] {};
\draw (4,-1) node [color=black] {$ G_{1,1:6:1,3}^{(3)}$};
\draw (0.75,0) node [enode] {$\frac{1}{4}$};
\draw (1.75,0) node [enode] {$\frac{4}{9}$};
\draw (1.25,0) node [enode] {$\frac{3}{4}$};

\draw (11.25,0) node [enode] {$\frac{1}{4}$};
\draw (10.25,0) node [enode] {$\frac{1}{3}$};
\draw (9.25,0) node [enode] {$\frac{3}{8}$};
\draw (8.25,0) node [enode] {$\frac{8}{15}$};
\draw (7.25,0) node [enode] {$\frac{15}{28}$};
\draw (6.25,0) node [enode] {$\frac{7}{13}$};
\draw (5.25,0) node [enode] {$\frac{13}{24}$};
\draw (4.25,0) node [enode] {$\frac{6}{11}$};
\draw (3.25,0) node [enode] {$\frac{11}{20}$};
\draw (2.25,0) node [enode] {$\frac{5}{9}$};

\draw (10.75,0) node [enode] {$\frac{3}{4}$};
\draw (9.75,0) node [enode] {$\frac{2}{3}$};
\draw (8.75,0) node [enode] {$\frac{5}{8}$};
\draw (7.75,0) node [enode] {$\frac{7}{15}$};
\draw (6.75,0) node [enode] {$\frac{13}{28}$};
\draw (5.75,0) node [enode] {$\frac{6}{13}$};
\draw (4.75,0) node [enode] {$\frac{11}{24}$};
\draw (3.75,0) node [enode] {$\frac{5}{11}$};
\draw (2.75,0) node [enode] {$\frac{9}{20}$};

\draw (1.75,1.25) node [enode] {$\frac{1}{4}$};
\draw (1.68,0.65) node [enode] {$\frac{3}{4}$};
\draw (8.75,1.25) node [enode] {$\frac{1}{4}$};
\draw (8.68,0.65) node [enode] {$\frac{3}{4}$};
\end{tikzpicture}
  \end{center}

The labels show that all
hypergraphs in the list of Theorem \ref{t2} are consistently
$\frac{1}{4}$-normal
and thus have the spectral radius equal to $\rho_3$.

We observe that the hypergraphs listed in Theorem \ref{t1} except for
$G^{(3)}_{1,1:k:1,3}$ (for $0\leq k\leq 5$)
are
proper subgraphs of some hypergraphs in the list of Theorem \ref{t2}.
By lemma \ref{subgraph}, these hypergraphs have the spectral radius
less than $\rho_3$. Note $\rho(G^{(3)}_{1,1:6:1,3})=\rho_3$. If
$\rho(G^{(3)}_{1,1:k:1,3})\geq \rho_3$ for some $k\in
\{0,1,2,3,4,5\}$,
then $\rho(G^{(3)}_{1,1:6:1,3})\geq \rho_3$ by Lemma \ref{l:contraction}
and some labelings in
$\rho(G^{(3)}_{1,1:6:1,3})$ should be equal to $\frac{1}{2}$. Since this
is not the case, we conclude $\rho(G^{(3)}_{1,1:k:1,3})< \rho_3$ for all $k\in
\{0,1,2,3,4,5\}$.

Now we show that the hypergraphs in Theorem \ref{t2} and \ref{t1} are
the complete list of all $3$-uniform hypergraphs with the spectral
radius at most $\rho_3$.
Suppose that $H$ is a $3$-uniform hypergraph with $\rho(H)\leq \rho_3$.
\begin{case}
 If $H$ contains $C^{(3)}_{2}$ as a proper subgraph, then by lemma
 \ref{subgraph},
$\rho(H)>\rho(C^{(3)}_{2})=\rho_3$. If $H=C^{(3)}_2$, then
$\rho(H)=\rho_3$. It is already in the list of Theorem \ref{t2}. Thus, if $H\neq C^{(3)}_{2}$,
we can assume that $H$ is a simple hypergraph.
\end{case}
\begin{case}
If $H$ contains a cycle, $H$ contains $C^{(3)}_{n}$ for some $n\geq 3$.
By lemma \ref{subgraph}, $\rho(H)\geq \rho(C^{(3)}_{n})=\rho_3$.
The equality holds if $H=C^{(3)}_{n}$, which is already in the list of
Theorem \ref{t2}. Thus, if $H\neq C^{(3)}_{n}$, we can assume that $H$ is a hypertree.
\end{case}
\begin{case}
 If there is a vertex $v$ with degree $d_v\geq 4$, then $H$ contains
 $S^{(3)}_{4}$ as a subgraph.
By lemma \ref{subgraph}, $\rho(H)\geq \rho(S^{(3)}_{4})=\rho_3$.
The equality holds if $H=S^{(3)}_{4}$, which is already in the list of
Theorem \ref{t2}. Thus we can assume that every vertex in $H$ has
degree at most $3$.
\end{case}

\begin{case} If there exists two  vertexes $u$ and $v$ with
  $d_u=d_v=3$, then $H$ contains $\tilde{D}^{(3)}_{n}$ as a subgraph.
By lemma \ref{subgraph}, $\rho(H)\geq \rho(\tilde D^{(3)}_{n})=\rho_3$.
The equality holds if $H=\tilde D^{(3)}_{n}$, which is already in the list of
Theorem \ref{t2}. Thus we can assume that $H$ can have at most one
vertex with degree $3$.
\end{case}

\begin{case} Suppose that $v$ is the unique vertex with degree $3$ and
  all other vertices have degree at most $2$. Consider the three
  branches attached to $v$.
\begin{enumerate}
\item  If every branch has at least two edges,  then $H$ contains $\tilde E_6^{(3)}$ as a subgraph.
By lemma \ref{subgraph}, $\rho(H)\geq \rho(\tilde E_6^{(3)})=\rho_3$.
The equality holds if $H=\tilde E_6^{(3)}$, which is already in the list of
Theorem \ref{t2}. Thus we can assume that the first branch consists of
only one edge.

\item  An edge $e$ is called a {\em branching edge} if every vertex of $e$ is
  not a leaf vertex.
If the second branch has at least two edges and the third
  branch consist of a branching edge, then $H$ consists of  a subgraph
  $G'$, which can be eventually contracted to $G$ shown below.
   \begin{center}
\begin{tikzpicture}[thick, scale=0.7, bnode/.style={circle, draw,
    fill=black!50, inner sep=0pt, minimum
    width=4pt},enode/.style={color=red},lnode/.style={color=black}]
\path[fill=gray]  (2,0) -- (1.5,-0.866) node [bnode] {} --
    (2.5,-0.866)  node [bnode] {} --cycle;
\foreach \x in {0,1,2,3}
    {
    \path[fill=gray]  (\x,0) node [bnode] {} -- (\x+0.5,0.866) node [bnode] {} --(\x+1,0)--cycle;
}

\draw (4,0) node  [bnode] {};
\path[fill=gray]  (1.5,0.866) -- (1, 1.732) node [bnode] {} --
    (2,1.732)  node [bnode] {} --cycle;

\draw (0.7,0) node [enode] {$\frac{1}{4}$};
\draw (1.3,0) node [enode] {$\frac{3}{4}$};
\draw (1.3,1.1) node [enode] {$\frac{1}{4}$};
\draw (1.6,0.6) node [enode] {$\frac{3}{4}$};
\draw (1.8,0.2) node [enode] {$\frac{4}{9}$};
\draw (2.2,0.2) node [enode] {$\frac{2}{3}$};
\draw (3.3,0) node [enode] {$\frac{1}{4}$};
\draw (2.7,0) node [enode] {$\frac{3}{4}$};
\draw (2,-0.5) node [enode] {$\frac{1}{4}$};
\draw (2,-1.5) node [lnode] {$G$ has a
  $\frac{1}{4}$-supernormal labeling};
\end{tikzpicture}
 \end{center}
Note that the sum of the labelings of $G$ at the center vertex is
$\frac{4}{9}+\frac{1}{3}+\frac{1}{4}>1$. Thus $G$ is strictly
$\frac{1}{4}$-supernormal and $\rho(G)>\rho_3$.
By Lemma \ref{subgraph} and Lemma \ref{l:contraction}, we have
$\rho(H)>\rho(G')>\rho_3$. Contradiction!

\item
The first and second branch each consist of one edge and the third
branch consists of at least one branching edge. Since
$\rho(\widetilde{BD}_n^{(3)})=\rho_3$, $H$ can not contain $\widetilde{BD}_n^{(3)}$ as a proper
subgraph. Thus the only possible hypergraphs are
$\widetilde{BD}_n^{(3)}$ and $BD_n^{(3)}$, which are in the list of
Theorem \ref{t2} and Theorem \ref{t1} respectively.

\item There is no branching edge in $H$. Let $i,j, k$ ($i\leq j\leq
  k$) be the length of three branches of the vertex $v$ and denote
  this graph by $E^{(3)}_{i,j,k}$. We have shown
  that $i=1$. Note that $E^{(3)}_{1,3,3}=\tilde E^{(3)}_7$ and
  $E^{(3)}_{1,2,5}=\tilde E^{(3)}_8$ are in the list of Theorem
  \ref{t2}. So $(j,k)$ can only have the following choices:
$(2,5), (2,4), (3,3), (2,3), (2,2)$ and $(1,k), k\geq 1$. The corresponding
graphs are $\tilde E^{(3)}_8$, $ E^{(3)}_8$, $\tilde E^{(3)}_7$, $E^{(3)}_7$,
$E^{(3)}_6$, and
$D^{(3)}_n$. These graphs are in the lists of Theorems \ref{t2} and \ref{t1}.
\end{enumerate}
\end{case}

\begin{case} Now we can assume that all degrees of vertices in $H$
  have degrees at most $2$. We will divide it into the sub-cases
  according to the number of branching edges.
\begin{enumerate}
\item If $H$ has no branching edge, then $H$ is a path, i.e. $H=A_n$,
  which is in the list of Theorem \ref{t1}.
\item If $H$ has exactly one branching edge, then $H=F^{(3)}_{i,j,k}$.
We will first show that $\rho(F_{3,3,3}^{(3)})>\rho_3$. We label
graph $F_{3,3,3}^{(3)}$ as follows:
\begin{center}
\begin{tikzpicture}[thick, scale=0.7, bnode/.style={circle, draw,
    fill=black!50, inner sep=0pt, minimum width=4pt}, enode/.style={color=red}]
\path[fill=gray]  (3.5,0.866) -- (3,1.732) node [bnode] {} --
    (4,1.732)  node [bnode] {} --cycle;
\path[fill=gray]  (3,1.732) -- (2.5,2.598) node [bnode] {} --
   (3.5,2.598)  node [bnode] {} --cycle;
\path[fill=gray]  (2.5,2.598) -- (2,3.464) node [bnode] {} --
   (3,3.464)  node [bnode] {} --cycle;

\foreach \x in {0,1,...,6}
    {
    \path[fill=gray]  (\x,0) node [bnode] {} -- (\x+0.5,0.866) node [bnode] {} --(\x+1,0)--cycle;
}
\draw (7,0) node  [bnode] {};
\draw (0.75,0) node [enode] {$\frac{1}{4}$};
\draw (1.75,0) node [enode] {$\frac{1}{3}$};
\draw (2.75,0) node [enode] {$\frac{3}{8}$};
\draw (1.25,0) node [enode] {$\frac{3}{4}$};
\draw (2.25,0) node [enode] {$\frac{2}{3}$};
\draw (3.25,0) node [enode] {$\frac{5}{8}$};
\end{tikzpicture}
\end{center}
By the symmetry, we only labeled one branching. Note that at the
center edge, the product of weights is
$(\frac{5}{8})^3<\frac{1}{4}$. Thus, this is a
$\frac{1}{4}$-supernormal labeling. Hence  by lemma \ref{supernormal},
$\rho(F_{3,3,3}^{(3)})>\rho_3$. So $H$ must not contain the subgraph
$F_{3,3,3}^{(3)}$. Since $i\leq j\leq k$, we must have $i=1$ or $2$.

When $i=2$ and $j=3$, as $\rho(F_{2,3,4}^{(3)})=\rho_3$,  there
are only two possible hypergraphs: $F_{2,3,3}^{(3)}$ and $F_{2,3,4}^{(3)}$.

When $i=2$ and $j=2$, as $\rho(F_{2,2,7}^{(3)})=\rho_3$, we must have
$2\leq k\leq 7$.

When $i=1$, as $\rho(F_{1,5,6}^{(3)})=\rho_3$, we must have
$j\leq 5$. When $j=5$, we have two possible hypergraphs:
$F_{1,5,5}^{(3)}$ and $F_{1,5,6}^{(3)}$.
When $j=4$, as $\rho(F_{1,4,8}^{(3)})=\rho_3$, we have $5$ possible
hypergraphs: $F^{(3)}_{1,4,k}$ for $4\leq k\leq 8$.
When $j=3$, as $\rho(F_{1,3,14}^{(3)})=\rho_3$, we have $12$ possible
hypergraphs: $F^{(3)}_{1,3,k}$ for $3\leq k\leq 14$.
When $j=2$, all the values of  $k$ are possible, and we get the family
$B^{(3)}_n$.
When $j=1$, all the values of $k$ are possible, and we get the family
${D'}^{(3)}_n$.

All these hypergraphs are in the list of Theorem \ref{t2} and \ref{t1}.

\item If $H$ has exactly two branching edges, then
  $H=G^{(3)}_{i,j:k:l,m}$ ($i\leq j, l\leq m$).

If $i+j\geq 3$ and $l+m\geq 3$, then $H$ contains a subgraph
$G^{(3)}_{1,2:k:1,2}=\tilde B^{(3)}_{k+8}$. Since the family $\tilde B^{(3)}_{n}$
have the spectral radius equal to $\rho_3$, we conclude $H$ must be
$\tilde B^{(3)}_{n}$ itself.

For the remaining cases, we can assume $i=j=1$. We first show that
$\rho(G^{(3)}_{1,1:0:2,2})>\rho_3$ (see the labeling below.)
\begin{center}
\begin{tikzpicture}[thick, scale=0.7, bnode/.style={circle, draw,
    fill=black!50, inner sep=0pt, minimum width=4pt},enode/.style={color=red}]
\path[fill=gray]  (1.5,0.866) -- (1,1.732) node [bnode] {} --
    (1.8,1.732)  node [bnode] {} --cycle;
\path[fill=gray]  (2.5,0.866) -- (2.2,1.732) node [bnode] {} --
    (3,1.732)  node [bnode] {} --cycle;
\path[fill=gray]  (3,1.732) -- (2.5,2.598) node [bnode] {} --
    (3.5,2.598)  node [bnode] {} --cycle;

\foreach \x in {0,1,...,4}
    {
    \path[fill=gray]  (\x,0) node [bnode] {} -- (\x+0.5,0.866) node [bnode] {} --(\x+1,0)--cycle;
}

\draw (5,0) node  [bnode] {};
\draw (2.5,-1) node [color=black] {a $\frac{1}{4}$-supernormal
  labeling of $G_{1,1:0:2,2}^{(3)}$};
\draw (0.75,0) node [enode] {$\frac{1}{4}$};
\draw (1.75,0) node [enode] {$\frac{4}{9}$};
\draw (2.75,0) node [enode] {$\frac{2}{3}$};
\draw (3.75,0) node [enode] {$\frac{3}{4}$};
\draw (4.25,0) node [enode] {$\frac{1}{4}$};

\draw (1.25,0) node [enode] {$\frac{3}{4}$};
\draw (2.25,0) node [enode] {$\frac{5}{9}$};
\draw (3.25,0) node [enode] {$\frac{1}{3}$};
\draw (1.35,1.25) node [enode] {$\frac{1}{4}$};
\draw (1.68,0.65) node [enode] {$\frac{3}{4}$};
\draw (2.35,1.25) node [enode] {$\frac{1}{3}$};
\draw (2.68,0.65) node [enode] {$\frac{2}{3}$};
\draw (3.2,2) node [enode] {$\frac{1}{4}$};
\draw (2.8,1.55) node [enode] {$\frac{3}{4}$};
\end{tikzpicture}
\end{center}
Note that $G^{(3)}_{1,1:k:2,2}$ can be obtained by expanding
$G^{(3)}_{1,1:0:2,2}$ $k$ times. By Lemma \ref{l:contraction}, we have
$\rho(G^{(3)}_{1,1:k:2,2})>\rho_3$ for any $k\geq 1$.
Thus, we must have $l=1$.
As $\rho(G_{1, 1:0:1, 4}^{(3)})=\rho_3$, by Lemma \ref{l:contraction},
we have $\rho(G_{1, 1:k:1, 4}^{(3)})>\rho_3$ for any $k\geq 1$. In
particular, there is no such hypergraph with $m\geq 5$.

If $m=4$, then we only get one hypergraph $G^{(3)}_{1,1:0:1,4}$.

If $m=3$, as $\rho(G_{1, 1:6:1, 3}^{(3)})=\rho_3$,  by Lemma \ref{l:contraction},
we get 7 hypergraphs: $\rho(G_{1, 1:k:1, 3}^{(3)})$ for  $0\leq k\leq
6$.

If $m=2$, then any $k$ works. We get the family $\bar B^{(3)}_n$.

If $m=1$, then any $k$ works. We get the family ${B'}^{(3)}_n$.

All these hypergraphs are in the lists of Theorems \ref{t2} and \ref{t1}.

\item $H$ contains at least three branching edges. Since all degrees of
  vertices are at most $2$, any branching edges lie in a path. Thus,
  $H$ contains a subgraph $M'$ in the following figure.
By contracting the middle edges connecting the branching edges, we get
a hypergraph $M$. We can see that $M$ admits the following
$\frac{1}{4}$-supernormal labeling.

\begin{center}
\begin{tikzpicture}[thick, scale=0.8, bnode/.style={circle, draw,
    fill=black!50, inner sep=0pt, minimum width=4pt}, enode/.style={color=red}]
\foreach \x in {1,3,5,}
    {
\path[fill=gray]  (\x+0.5,0.866) -- (\x,1.732) node [bnode] {} --
    (\x+1,1.732)  node [bnode] {} --cycle;
}
\foreach \x in {0,1,3,5,6}
    {
    \path[fill=gray]  (\x,0) node [bnode] {} -- (\x+0.5,0.866) node [bnode] {} --(\x+1,0)--cycle;
}

\draw (2.5,0) node [color=black] {$\cdots$};
\draw (4.5,0) node [color=black] {$\cdots$};
\draw (2,0) node  [bnode] {};
\draw (4,0) node  [bnode] {};
\draw (7,0) node  [bnode] {};
\draw (3.5,-1) node [color=black] {a subgraph $M'$};
\end{tikzpicture}
\hfil
\begin{tikzpicture}[thick, scale=0.8, bnode/.style={circle, draw,
    fill=black!50, inner sep=0pt, minimum width=4pt}, enode/.style={color=red}]
\foreach \x in {1,2,3}
    {
\path[fill=gray]  (\x+0.5,0.866) -- (\x+0.2,1.732) node [bnode] {} --
    (\x+0.8,1.732)  node [bnode] {} --cycle;
}

\foreach \x in {0,1,...,4}
    {
    \path[fill=gray]  (\x,0) node [bnode] {} -- (\x+0.5,0.866) node [bnode] {} --(\x+1,0)--cycle;
}

\draw (5,0) node  [bnode] {};
\draw (2.5,-1) node [color=black] {after contraction: $M$};

\draw (0.8,0) node [enode] {$\frac{1}{4}$};
\draw (1.2,0) node [enode] {$\frac{3}{4}$};
\draw (1.8,0) node [enode] {$\frac{4}{9}$};
\draw (2.2,0) node [enode] {$\frac{5}{9}$};
\draw (4.2,0) node [enode] {$\frac{1}{4}$};
\draw (3.8,0) node [enode] {$\frac{3}{4}$};
\draw (3.2,0) node [enode] {$\frac{4}{9}$};
\draw (2.8,0) node [enode] {$\frac{5}{9}$};

\draw (1.5,1.2) node [enode] {$\frac{1}{4}$};
\draw (1.5,0.4) node [enode] {$\frac{3}{4}$};
\draw (2.5,1.2) node [enode] {$\frac{1}{4}$};
\draw (2.5,0.4) node [enode] {$\frac{3}{4}$};
\draw (3.5,1.2) node [enode] {$\frac{1}{4}$};
\draw (3.5,0.4) node [enode] {$\frac{3}{4}$};
\end{tikzpicture}
\end{center}
Note that in the above labeling, the product of the center edge
is $\frac{5}{9}\cdot \frac{5}{9} \cdot
\frac{3}{4}=\frac{25}{108}<\frac{1}{4}$. So it is indeed a
$\frac{1}{4}$-supernormal labeling. Thus, $\rho(M)>\rho_3$. By
Lemma \ref{l:contraction}, we get $\rho(M')>\rho_3$. Thus,
$\rho(H)>\rho(M)>\rho_3$ by Lemma \ref{subgraph}.
Contradiction.
\end{enumerate}
\end{case}
Therefore, all hypergraphs with spectral radius equal to $\rho_3$ are in the list of Theorem
\ref{t2}, and all hypergraphs with spectral radius less than $\rho_{3}$ are  in the list of Theorem \ref{t1}.
\end{proof}

\section{General $r$-uniform hypergraphs}

For any integer $r\geq 2$, let $\rho_r:=(r-1)!\sqrt[r]{4}$. In this
section, we will classify all $r$-uniform connected hypergraphs with
spectral radius at most $\rho_r$ for all $r\geq 4$.

A hypergraph $H=(V,E)$ is called {\em reducible} if every edge $e$
contains at least one
leaf vertex $v_e$. In this case, we can define an $(r-1)$-uniform
multi-hypergraph $H'=(V',E')$ by removing $v_e$ from each edge $e$,
i.e., $V'=V\setminus \{v_e\colon e\in E\}$ and $E'=\{e-v_e\colon e\in
E\}$. We say that $H'$ is {\em reduced} from $H$ while $H$ extends $H'$.

Observe that in any $\alpha$-normal incident matrix $B$, if an edge
$e$ has a leaf vertex $v_e$, then $B(v_e, e)=1$. This leads to the
following lemma.
\begin{lemma}\label{l:reduced}
If $H$ extends $H'$, then $H$ is consistently $\alpha$-normal if
and only of $H'$ is consistently $\alpha$-normal for the same value of $\alpha$.
\end{lemma}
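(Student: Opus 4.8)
The plan is to exhibit a weight-preserving bijection between the weighted incidence matrices of $H$ and those of $H'$ and to check that it respects all three conditions of Definition \ref{anormal}. Write $e' = e - v_e$ for the edge of $H'$ obtained from $e \in E$, so that $e \mapsto e'$ is a bijection from $E$ onto the multiset $E'$ and every vertex of $e'$ lies in $V'$. Given a weighted incidence matrix $B$ on $H$, I would define $B'$ on $H'$ by $B'(v,e') = B(v,e)$ for each $v \in e'$; conversely, given $B'$ on $H'$, I would define $B$ on $H$ by $B(v,e) = B'(v,e')$ for $v \in e'$ and $B(v_e,e) = 1$. The observation preceding the lemma makes these two maps mutually inverse: in any matrix satisfying condition (1), a leaf vertex $v_e$ must have $B(v_e,e) = 1$, since it lies in the single edge $e$.

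First I would verify condition (1). Every vertex $v \in V'$ is distinct from all removed leaves, so $v \in e'$ exactly when $v \in e$; hence the edges of $H'$ through $v$ correspond bijectively to the edges of $H$ through $v$, giving $\sum_{e' \ni v} B'(v,e') = \sum_{e \ni v} B(v,e)$, so condition (1) at $v$ transfers verbatim in both directions. In the extension direction there are also the removed leaf vertices $v_e \notin V'$, but condition (1) there holds automatically because the unique incident weight has been set to $1$. Next I would check condition (2): since $B(v_e,e) = 1$,
$$\prod_{v \in e} B(v,e) = B(v_e,e)\prod_{v \in e'} B(v,e) = \prod_{v \in e'} B'(v,e'),$$
so the edge product over $e$ equals that over $e'$, and one side equals $\alpha$ exactly when the other does.

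The part requiring the most care is the consistency condition, and this is where I would concentrate the argument. The key point is that a leaf vertex has degree $1$ and therefore cannot lie on any cycle; consequently every cycle $v_0 e_1 v_1 \cdots v_l\,(=v_0)$ of $H$ uses only vertices of $V'$ and edges of $E$, and mapping each $e_i$ to $e_i'$ produces a cycle of $H'$ on the same vertex sequence. Since $H$ and $H'$ share the same incidence structure on $V'$, this gives a bijection between the cycles of $H$ and those of $H'$. Along such a cycle the relevant entries are unchanged, $B'(v_i,e_i') = B(v_i,e_i)$ and $B'(v_{i-1},e_i') = B(v_{i-1},e_i)$, so the products $\prod_{i=1}^l B(v_i,e_i)/B(v_{i-1},e_i)$ agree and $B$ is consistent iff $B'$ is. The only remaining subtlety is that $H'$ may be a genuine multi-hypergraph, two edges of $H$ reducing to the same set; but since incidence matrices and the conditions of Definition \ref{anormal} are indexed by edges rather than by vertex-sets, the bijection $e \mapsto e'$ handles this bookkeeping transparently. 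Combining the three verifications in both directions yields the stated equivalence.
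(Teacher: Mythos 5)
Your proof is correct and follows the same route the paper takes: the paper offers only the key observation that in any $\alpha$-normal matrix a leaf vertex $v_e$ forces $B(v_e,e)=1$, and leaves the resulting weight-preserving correspondence implicit. You have simply written out in full the verification of conditions (1), (2), and consistency that the paper treats as immediate.
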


\begin{corollary}
\label{extend}
  If $H$ extends $H'$, then $\rho(H)=\rho_r$ (or  $\rho(H)<\rho_r$) if
  and only if $\rho(H')=\rho_{r-1}$ (or  $\rho(H')<\rho_{r-1}$).
\end{corollary}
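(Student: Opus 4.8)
The plan is to reduce the whole statement to the characterization of the spectral radius through consistent $\alpha$-normality (Lemma \ref{l:main}) and then transport that data across the reduction step via Lemma \ref{l:reduced}. The guiding observation is that, for a connected hypergraph, Lemma \ref{l:main} ties the value of $\alpha$ for which the hypergraph is consistently $\alpha$-normal to its spectral radius by a strictly monotone relation, while Lemma \ref{l:reduced} says precisely that this $\alpha$ is preserved when passing between $H$ and $H'$.

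First I would set $\alpha:=((r-1)!/\rho(H))^r$, so that by Lemma \ref{l:main} the $r$-uniform hypergraph $H$ is consistently $\alpha$-normal (this uses that $H$ is connected). Applying Lemma \ref{l:reduced} to the pair ``$H$ extends $H'$'', the $(r-1)$-uniform multi-hypergraph $H'$ is consistently $\alpha$-normal for \emph{the same} $\alpha$. Since $H'$ is connected (deleting one leaf vertex per edge removes only degree-$1$ vertices, which can never be interior vertices of a walk and hence cannot disconnect the non-leaf vertices), I may apply the sufficiency half of Lemma \ref{l:main} to $H'$, now with uniformity $r-1$, to conclude $\rho(H')=(r-2)!\,\alpha^{-1/(r-1)}$. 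Equivalently $((r-2)!/\rho(H'))^{r-1}=\alpha$, so the $\alpha$-values attached to $H$ and to $H'$ coincide.

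With this common $\alpha$ in hand the statement becomes a monotonicity check. Writing $\rho(H)=(r-1)!\,\alpha^{-1/r}$ and $\rho_r=(r-1)!\,(1/4)^{-1/r}$, and using that $t\mapsto t^{-1/r}$ is strictly decreasing on $(0,\infty)$, I obtain $\rho(H)=\rho_r \iff \alpha=\tfrac14$ and $\rho(H)<\rho_r \iff \alpha>\tfrac14$. The identical computation with $r$ replaced by $r-1$ gives $\rho(H')=\rho_{r-1}\iff \alpha=\tfrac14$ and $\rho(H')<\rho_{r-1}\iff \alpha>\tfrac14$. Since $H$ and $H'$ share the same $\alpha$, chaining these equivalences yields both $\rho(H)=\rho_r \iff \rho(H')=\rho_{r-1}$ and $\rho(H)<\rho_r \iff \rho(H')<\rho_{r-1}$, which is exactly the asserted corollary.

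The step I expect to need the most care is matching the normalization constants across the drop in uniformity: $\rho_r$ carries the factor $(r-1)!$ and the exponent $1/r$, whereas $\rho_{r-1}$ carries $(r-2)!$ and $1/(r-1)$, and these must be tracked so that both the ``$=\rho$'' and the ``$<\rho$'' conditions collapse to the single numerical condition $\alpha=\tfrac14$ (respectively $\alpha>\tfrac14$) for $H$ and for $H'$ simultaneously. Beyond this bookkeeping no new spectral estimate is required, since all the substance already lives in the established equivalence of consistent $\alpha$-normality for $H$ and $H'$; the only auxiliary fact to confirm is the connectedness of $H'$, so that the sufficiency direction of Lemma \ref{l:main} legitimately applies to it.
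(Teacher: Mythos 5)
Your proposal is correct and follows exactly the route the paper intends: the corollary is stated without a separate proof precisely because it is the combination of Lemma \ref{l:main} (the strictly monotone correspondence $\rho = (r-1)!\,\alpha^{-1/r}$ for connected hypergraphs) with Lemma \ref{l:reduced} (preservation of the consistent $\alpha$-normal value under reduction), which is what you carry out. Your added checks --- that $H'$ remains connected because only degree-one vertices are deleted, and that the normalization constants $(r-1)!$, $1/r$ versus $(r-2)!$, $1/(r-1)$ both collapse the comparison to the single condition $\alpha=\tfrac14$ (resp.\ $\alpha>\tfrac14$) --- are exactly the bookkeeping the paper leaves implicit.
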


We will use the similar notion for those special $r$-uniform hypergraphs with
spectral radius at most $\rho_r$.
For $r=2$, by Smith' theorem, the graph with spectral
radius less than $2$ are $A_n, D_n, E_6, E_7, E_8$; the graph with
spectral radius equal to $2$ are $C_n=(\tilde A_n)$, $\tilde D_n$,
$\tilde E_6$, $\tilde E_7$, and $\tilde E_8$.
For any $r\geq 3$, let $A^{(r)}_n, D^{(r)}_n, E^{(r)}_6, E^{(r)}_7,
E^{(r)}_8$, $C^{(r)}_n$, $\tilde D^{(r)}_n$,
$\tilde E^{(r)}_6$, $\tilde E^{(r)}_7$, and $\tilde E^{(r)}_8$ denote the
$r$-uniform hypergraphs extending from the graphs of Smith's list by
$r-2$ times. We can extend the graphs in Theorems \ref{t2} and \ref{t1}
in a similar way. Are there any new hypergraphs not extended from
the list of smaller $r$?

\begin{theorem}
\label{t3}
  For $r\geq 5$, every $r$-uniform hypergraphs with spectral radius at
  most $\rho_r$ is reducible. For $r=4$,  irreducible
  hypergraphs with spectral radius at most $\rho_r$ are the following
  hypergraphs.
\begin{center}
  \begin{tikzpicture}[thick, scale=0.6, bnode/.style={circle, draw,
    fill=black!50, inner sep=0pt, minimum width=4pt}, enode/.style={}]
\foreach \x in {0,1,...,4}
    {
    \path[fill=gray]  (\x,0) node [bnode] {} -- (\x+0.5,0.5) node [bnode] {} --(\x+1,0)-- (\x+0.5,-0.5) node [bnode]{} --cycle;

}
\path[fill=gray]  (2.5,0.5)  -- (2,1) node [bnode] {} --(2.5,1.5)node
[bnode] {}-- (3,1) node [bnode]{} --cycle;
\path[fill=gray]  (2.5,-0.5)  -- (2,-1) node [bnode] {} --(2.5,-1.5)node [bnode] {}-- (3,-1) node [bnode]{} --cycle;

\draw (5,0) node  [bnode] {};
\draw (1.5,-1.8) node [enode] {$H^{(r)}_{1,1,2,2}$};
\end{tikzpicture}
\hfil
  \begin{tikzpicture}[thick, scale=0.6, bnode/.style={circle, draw,
    fill=black!50, inner sep=0pt, minimum width=4pt}, enode/.style={}]

\path[fill=gray]  (1.5,0.5)  -- (1,1) node [bnode] {} --(1.5,1.5)node
[bnode] {}-- (2,1) node [bnode]{} --cycle;
\path[fill=gray]  (1.5,-0.5)  -- (1,-1) node [bnode] {} --(1.5,-1.5)node [bnode] {}-- (2,-1) node [bnode]{} --cycle;

\foreach \x in {0,1,2}
    {
    \path[fill=gray]  (\x,0) node [bnode] {} -- (\x+0.5,0.5) node [bnode] {} --(\x+1,0)-- (\x+0.5,-0.5) node [bnode]{} --cycle;

}

\draw (3,0) node  [bnode] {};
\draw (2.5,-1.8) node [enode] {$H^{(r)}_{1,1,1,1}$};
\end{tikzpicture}
\hfil
  \begin{tikzpicture}[thick, scale=0.6, bnode/.style={circle, draw,
    fill=black!50, inner sep=0pt, minimum width=4pt}, enode/.style={}]

\path[fill=gray]  (1.5,0.5)  -- (1,1) node [bnode] {} --(1.5,1.5)node
[bnode] {}-- (2,1) node [bnode]{} --cycle;
\path[fill=gray]  (1.5,-0.5)  -- (1,-1) node [bnode] {} --(1.5,-1.5)node [bnode] {}-- (2,-1) node [bnode]{} --cycle;

\foreach \x in {0,1,...,3}
    {
    \path[fill=gray]  (\x,0) node [bnode] {} -- (\x+0.5,0.5) node [bnode] {} --(\x+1,0)-- (\x+0.5,-0.5) node [bnode]{} --cycle;

}

\draw (4,0) node  [bnode] {};
\draw (2.5,-1.8) node [enode] {$H^{(r)}_{1,1,1,2}$};
\end{tikzpicture}\\

  \begin{tikzpicture}[thick, scale=0.6, bnode/.style={circle, draw,
    fill=black!50, inner sep=0pt, minimum width=4pt}, enode/.style={}]

\path[fill=gray]  (1.5,0.5)  -- (1,1) node [bnode] {} --(1.5,1.5)node
[bnode] {}-- (2,1) node [bnode]{} --cycle;
\path[fill=gray]  (1.5,-0.5)  -- (1,-1) node [bnode] {} --(1.5,-1.5)node [bnode] {}-- (2,-1) node [bnode]{} --cycle;

\foreach \x in {0,1,...,4}
    {
    \path[fill=gray]  (\x,0) node [bnode] {} -- (\x+0.5,0.5) node [bnode] {} --(\x+1,0)-- (\x+0.5,-0.5) node [bnode]{} --cycle;

}

\draw (5,0) node  [bnode] {};
\draw (2.5,-1.8) node [enode] {$H^{(r)}_{1,1,1,3}$};
\end{tikzpicture}
\hfil
  \begin{tikzpicture}[thick, scale=0.6, bnode/.style={circle, draw,
    fill=black!50, inner sep=0pt, minimum width=4pt}, enode/.style={}]

\path[fill=gray]  (1.5,0.5)  -- (1,1) node [bnode] {} --(1.5,1.5)node
[bnode] {}-- (2,1) node [bnode]{} --cycle;
\path[fill=gray]  (1.5,-0.5)  -- (1,-1) node [bnode] {} --(1.5,-1.5)node [bnode] {}-- (2,-1) node [bnode]{} --cycle;

\foreach \x in {0,1,...,5}
    {
    \path[fill=gray]  (\x,0) node [bnode] {} -- (\x+0.5,0.5) node [bnode] {} --(\x+1,0)-- (\x+0.5,-0.5) node [bnode]{} --cycle;

}

\draw (6,0) node  [bnode] {};
\draw (2.5,-1.8) node [enode] {$H^{(r)}_{1,1,1,4}$};
\end{tikzpicture}

\end{center}
\end{theorem}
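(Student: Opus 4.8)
The plan is to convert the spectral bound into the combinatorial statement that $H$ carries a $\frac14$-subnormal weighted incidence matrix, and then to run two elementary estimates on it. Indeed, by Lemma~\ref{l:main} a connected $H$ is consistently $\alpha$-normal with $\alpha=((r-1)!/\rho(H))^r$, and $\rho(H)\le\rho_r$ gives $\alpha\ge\frac14$; the associated matrix $B$ then has row sums $1$ and edge products $\ge\frac14$, i.e.\ it is $\frac14$-subnormal. From this I would extract: (i) a universal lower bound $B(v,e)\ge\frac14$ for every incidence $v\in e$, since $\prod_{u\in e}B(u,e)\ge\frac14$ while every other factor satisfies $B(u,e)\le1$ (a consequence of the row-sum constraint); and (ii) its dual, that any non-leaf vertex $v$ of an edge $e$ has $B(v,e)\le\frac34$, because a second edge through $v$ already claims weight $\ge\frac14$ at $v$.

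For $r\ge5$ these two facts already finish the job. An irreducible $H$ has a branching edge $e=\{v_1,\dots,v_r\}$ whose vertices are all non-leaves, so $B(v_i,e)\le\frac34$ for each $i$ and hence $\prod_{i}B(v_i,e)\le(3/4)^r$. But $(3/4)^5=243/1024<\frac14$, and the bound only shrinks as $r$ grows, contradicting the edge condition $\prod_i B(v_i,e)\ge\frac14$. Thus no irreducible hypergraph is $\frac14$-subnormal, so every connected $H$ with $\rho(H)\le\rho_r$ is reducible.

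For $r=4$ one has $(3/4)^4=81/256>\frac14$, so the crude estimate no longer closes and I would sharpen (ii) into an arm recursion. If $v$ roots a simple path of $a$ consecutive non-leaf vertices leading away from $e$, then iterating (i)--(ii) inward from the far end gives $B(v,f_1)\ge m_a$, where $m_1=\frac14$ and $m_a=\frac{1}{4(1-m_{a-1})}$, so that $B(v_i,e)\le1-m_{a_i}$ with $1-m_a=\frac{3}{4},\frac{2}{3},\frac{5}{8},\frac{3}{5},\dots\nearrow\frac12$. The edge condition becomes the necessary inequality $\prod_{i=1}^4(1-m_{a_i})\ge\frac14$, and a direct enumeration shows the arm-length multiset $\{a_1,a_2,a_3,a_4\}$ must be one of $\{1,1,1,1\}$, $\{1,1,1,2\}$, $\{1,1,1,3\}$, $\{1,1,1,4\}$, $\{1,1,2,2\}$ --- exactly the five listed graphs. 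For the reverse inclusion I would display the matching $\frac14$-subnormal labelings; these graphs are hypertrees, so consistency is automatic, and the multiset $\{1,1,2,2\}$ realizes product exactly $\frac14$ and is the consistently $\frac14$-normal (i.e.\ $\rho=\rho_4$) case, while the others are strictly subnormal.

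The main obstacle is the structural step for $r=4$: showing that each $v_i$ has degree exactly $2$ and that each arm is a genuine simple path, with no extra incident edge, no branching, and no cycle. I would again argue by pull-counting --- a degree-$\ge3$ vertex on $e$ forces $B(v_i,e)\le\frac12$, and an arm edge carrying a second non-leaf vertex raises the weight it pulls at its root, which propagates back along the arm to give $B(v_i,e)\le\frac59$ (and less for deeper or heavier branching) --- and then observe that each of the five admissible multisets lies so close to the boundary $\prod=\frac14$ that even one such reduction pushes $\prod_i B(v_i,e)$ below $\frac14$. Turning this ``every deviation is charged at least $\tfrac14$ of extra pull'' principle into an exhaustive finite verification is the delicate bookkeeping of the proof, but it is routine rather than conceptual.
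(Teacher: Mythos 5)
Your proposal is correct in substance but runs along a genuinely different track from the paper's proof. The paper argues ``from below'': it exhibits explicit $\frac{1}{4}$-supernormal labelings of auxiliary graphs (the edge-star $S_r^{(r)}$ for $r\ge 5$; the contracted graphs $H_1$, $H_2$ and the graph $H^{(4)}_{1,1,1,5}$ for $r=4$) and then invokes Lemmas \ref{supernormal}, \ref{subgraph} and \ref{l:contraction} to push the conclusion $\rho>\rho_r$ up to $H$. You instead work entirely inside the single consistently $\alpha$-normal matrix on $H$ itself supplied by Lemma \ref{l:main}, note that $\rho(H)\le\rho_r$ forces $\alpha\ge\frac{1}{4}$, and extract local necessary conditions: $B(v,e)\ge\frac{1}{4}$ for every incidence, $B(v,e)\le\frac{3}{4}$ at non-leaf vertices, and the arm recursion $m_a=\frac{1}{4(1-m_{a-1})}$. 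This is tidier: it needs no contraction lemma and no auxiliary supernormal labelings, and the numbers it produces are exactly the labels in the paper's figures ($(3/4)^r<\frac{1}{4}$ for $r\ge 5$; the bounds $\frac{1}{2}$ and $\frac{5}{9}$ for the degree-$3$ and branching-arm deviations; $(3/4)^3(1-m_5)=\frac{63}{256}<\frac{1}{4}$ killing $H^{(4)}_{1,1,1,5}$; $(3/4)^2(2/3)^2=\frac{1}{4}$ exactly for $H^{(4)}_{1,1,2,2}$). Your enumeration of admissible arm-length multisets checks out, as does the converse direction via Lemma \ref{subnormal}. The one place your sketch is thinner than it should be is the exclusion, for $r=4$, of cycles and of pairs of edges meeting in at least two vertices: the paper dispatches these at the outset (such an $H$ contains a consistently $\frac{1}{4}$-normal $C_{\ell}^{(r)}$ or $C_2^{(r)}$, forcing $H$ to coincide with it, and these are reducible), whereas you fold them into ``no cycle'' inside the bookkeeping step. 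Your method does cover them --- around a cycle $v_0e_1v_1\cdots e_{\ell}v_0$ the constraints $B(v_{i-1},e_i)B(v_i,e_i)\ge\frac{1}{4}$ and $B(v_i,e_i)+B(v_i,e_{i+1})\le 1$ chain into $t_{i+1}\le 1-\frac{1}{4t_i}$ with $t_i=B(v_{i-1},e_i)$, and since $(2t-1)^2\ge 0$ gives $1-\frac{1}{4t}\le t$ while the map is increasing, going once around forces $t_i\equiv\frac{1}{2}$, every cycle vertex to have degree exactly $2$, and every cycle edge to carry only two non-leaf vertices, which is incompatible with the presence of a branching edge $F_0$ in a connected $H$ --- but this needs to be written down rather than gestured at before the case analysis can be called complete.
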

\begin{proof}
 Let $H$ be an $r$-uniform hypergraph with $\rho(H)\leq \rho_r$.
 \begin{enumerate}
 \item If $H$ is not simple, then $H$ contains a subgraph that consists of
two edges  intersecting on $s\geq 2$ vertices. Call this subgraph
$G^{(r)}_s$. Define a weighted incident matrix $B$ of $G^{(r)}_s$ as follows: for any
vertex $v$ and edge $e$ (called the other edge $e'$),
$$B(v,e)=
\begin{cases}
 \frac{1}{2} & \mbox{ if } v\in e\cap e',\\
 1   & \mbox{ if }v\in e\setminus e',\\
0 &\mbox{ otherwise.}
\end{cases}
$$
It is easy to check that $B$ is consistently $\frac{1}{4}$-supernormal.
It is strict if $s>2$ and $\frac{1}{4}$-normal if $s=2$.
We have $$\rho(H)\geq \rho(G^{(r)}_s)\geq \rho_r.$$
The equality holds if and only if $H=G^{(r)}_2=C^{(r)}_2$. In this
case, $H$ is reducible.
\item Now assume that $H$ is simple. If $H$ is not a simple hypertree,
  then $H$ contain a cycle. Let  $C_l=v_0e_1v_1e_1\cdots v_{l-1}e_lv_0$ be
a cycle of the minimum length in $H$. Observe that any vertex in
$e_i$ other than $v_{i-1}$ and $v_i$ must be a leaf vertex in $C_l$.
This cycle must be equal to $C_l^{(r)}$, which is $\frac{1}{4}$-normal.
We have
$$\rho(H)\geq \rho(C^{(r)}_l)= \rho_r.$$
The equality holds if and only if $H=C^{(r)}_l$. In this case, $H$ is reducible.
\item Finally, we assume that $H$ is a simple hypertree.
Now assume that $H$ is irreducible. There exists an edge, saying $F_0=\{v_1,v_2,\ldots, v_r\}$ so that
each vertex $v_i$ is in another edge $F_i$, for $i=1,2,\ldots,
k$. The subgraph consisting of edges $F_0,F_1,\ldots, F_r$ is  called
an {\em edge-star}, denoted by
$S^{(r)}_r$. Now we define $B(v_i,F_i)=\frac{1}{4}$,
$B(v_i,F_0)=\frac{3}{4}$,
and $B(v, F_i)=1$ for each vertex $v\not=v_i$ in $F_i$.
Note $\prod_{i=1}^rB(e_i,F_0)=(\frac{3}{4})^r<\frac{1}{4}$ if $r\geq
5$. Thus $S_r^{(r)}$ is $\frac{1}{4}$-supernormal for $r\geq 5$.
We have $\rho(H)\geq \rho(S_r^{(r)})>\rho_r$.
Contradiction! Thus, every $r$-uniform hypergraphs with spectral
radius at most $\rho_r$ is reducible. 
 \item It remains to consider the case $r=4$. We claim that the four branches (after remove $F_0$)
   must be all paths. Otherwise, if there is a branch containing either a
   branching vertex or a branching edge, $H$ contains one of the
   following subgraphs $H_1'$ and $H_2'$.

\begin{center}
\begin{tikzpicture}[thick, scale=0.6, bnode/.style={circle, draw,
    fill=black!50, inner sep=0pt, minimum width=4pt}, enode/.style={}]

\path[fill=gray]  (1.5,0.5)  -- (1,1) node [bnode] {} --(1.5,1.5)node
[bnode] {}-- (2,1) node [bnode]{} --cycle;
\path[fill=gray]  (1.5,-0.5)  -- (1,-1) node [bnode] {} --(1.5,-1.5)node [bnode] {}-- (2,-1) node [bnode]{} --cycle;

\foreach \x in {0,1}
    {
    \path[fill=gray]  (\x,0) node [bnode] {} -- (\x+0.5,0.5) node [bnode] {} --(\x+1,0)-- (\x+0.5,-0.5) node [bnode]{} --cycle;

}
\draw (2,0) node  [bnode] {};
\draw (4,0) node  [bnode] {};
\draw (2.5,0) node [enode] {$\cdots$};
\draw (3.5,0) node [enode] {$\cdots$};
\path[fill=gray]  (4,0)  -- (3.5,0.5) node [bnode] {} --(4,1)node
[bnode] {}-- (4.5,0.5) node [bnode]{} --cycle;
\path[fill=gray]  (4,0)  -- (3.5,-0.5) node [bnode] {} --(4,-1)node
[bnode] {}-- (4.5,-0.5) node [bnode]{} --cycle;
\draw (2.5,-2) node [color=black] {$H_1'$};
\end{tikzpicture}
\hfil
\begin{tikzpicture}[thick, scale=0.6, bnode/.style={circle, draw,
    fill=black!50, inner sep=0pt, minimum width=4pt}, enode/.style={}]

\path[fill=gray]  (1.5,0.5)  -- (1,1) node [bnode] {} --(1.5,1.5)node
[bnode] {}-- (2,1) node [bnode]{} --cycle;
\path[fill=gray]  (1.5,-0.5)  -- (1,-1) node [bnode] {} --(1.5,-1.5)node [bnode] {}-- (2,-1) node [bnode]{} --cycle;

\foreach \x in {0,1}
    {
    \path[fill=gray]  (\x,0) node [bnode] {} -- (\x+0.5,0.5) node [bnode] {} --(\x+1,0)-- (\x+0.5,-0.5) node [bnode]{} --cycle;

}
\draw (2,0) node  [bnode] {};
\draw (4,0) node  [bnode] {};
\draw (5,0) node  [bnode] {};
\draw (2.5,0) node [enode] {$\cdots$};
\draw (3.5,0) node [enode] {$\cdots$};
\foreach \x in {4}
    {
    \path[fill=gray]  (\x,0) node [bnode] {} -- (\x+0.5,0.5) node [bnode] {} --(\x+1,0)-- (\x+0.5,-0.5) node [bnode]{} --cycle;

}
\path[fill=gray]  (4.5,0.5)  -- (4,1) node [bnode] {} --(4.5,1.5)node
[bnode] {}-- (5,1) node [bnode]{} --cycle;
\path[fill=gray]  (4.5,-0.5)  -- (4,-1) node [bnode] {} --(4.5,-1.5)node
[bnode] {}-- (5,-1) node [bnode]{} --cycle;
\draw (2.5,-2) node [color=black] {$H_2'$};
\end{tikzpicture}
\end{center}

To show that $\rho(H_1')>\rho_4$ and $\rho(H_2')>\rho_4$, it is
sufficient to give a $\frac{1}{4}$-supernormal labeling for the
contracted hypergraphs $H_1$ and $H_2$ as shown below.
\begin{center}
\begin{tikzpicture}[thick, scale=0.9, bnode/.style={circle, draw,
    fill=black!50, inner sep=0pt, minimum width=4pt}, enode/.style={}]

\path[fill=gray]  (1.5,0.5)  -- (1,1) node [bnode] {} --(1.5,1.5)node
[bnode] {}-- (2,1) node [bnode]{} --cycle;
\path[fill=gray]  (1.5,-0.5)  -- (1,-1) node [bnode] {} --(1.5,-1.5)node [bnode] {}-- (2,-1) node [bnode]{} --cycle;

\foreach \x in {0,1}
    {
    \path[fill=gray]  (\x,0) node [bnode] {} -- (\x+0.5,0.5) node [bnode] {} --(\x+1,0)-- (\x+0.5,-0.5) node [bnode]{} --cycle;

}
\draw (2,0) node  [bnode] {};
\path[fill=gray]  (2,0)  -- (2.66,-0.30) node [bnode] {} --(2.56,-0.86)node
[bnode] {}-- (1.90,-0.68) node [bnode]{} --cycle;
\path[fill=gray]  (2,0)  -- (2.66,0.30) node [bnode] {} --(2.56,0.86)node
[bnode] {}-- (1.90,0.68) node [bnode]{} --cycle;

\draw (0.8,0) node [color=red] {$\frac{1}{4}$};
\draw (1.2,0) node [color=red] {$\frac{3}{4}$};
\draw (1.8,0) node [color=red] {$\frac{1}{2}$};
\draw (1.35,0.4) node [color=red] {$\frac{3}{4}$};
\draw (1.35,-0.4) node [color=red] {$\frac{3}{4}$};
\draw (1.6,0.8) node [color=red] {$\frac{1}{4}$};
\draw (1.6,-0.8) node [color=red] {$\frac{1}{4}$};
\draw (2.2,0.3) node [color=red] {$\frac{1}{4}$};
\draw (2.2,-0.3) node [color=red] {$\frac{1}{4}$};

\draw (1.5,-2.1) node [color=black] {a $\frac{1}{4}$-supernormal
  labeling of $H_1$};
\end{tikzpicture}
\hfil
\begin{tikzpicture}[thick, scale=0.9, bnode/.style={circle, draw,
    fill=black!50, inner sep=0pt, minimum width=4pt}, enode/.style={}]

\path[fill=gray]  (1.5,0.5)  -- (1,1) node [bnode] {} --(1.5,1.5) node
[bnode] {}-- (1.9,1) node [bnode]{} --cycle;
\path[fill=gray]  (1.5,-0.5)  -- (1,-1) node [bnode] {} --(1.5,-1.5)node [bnode] {}-- (1.9,-1) node [bnode]{} --cycle;

\foreach \x in {0,1,2}
    {
    \path[fill=gray]  (\x,0) node [bnode] {} -- (\x+0.5,0.5) node [bnode] {} --(\x+1,0)-- (\x+0.5,-0.5) node [bnode]{} --cycle;

}
\draw (3,0) node  [bnode] {};

\path[fill=gray]  (2.5,0.5)  -- (2.2,1.1) node [bnode] {} --(2.75,1.6) node
[bnode] {}-- (3.1,0.9) node [bnode]{} --cycle;
\path[fill=gray]  (2.5,-0.5)  -- (2.2,-1.1) node [bnode] {} --(2.75,-1.6) node
[bnode] {}-- (3.1,-0.9) node [bnode]{} --cycle;
\draw (0.8,0) node [color=red] {$\frac{1}{4}$};
\draw (1.2,0) node [color=red] {$\frac{3}{4}$};
\draw (1.8,0) node [color=red] {$\frac{5}{9}$};
\draw (2.2,0) node [color=red] {$\frac{4}{9}$};
\draw (1.35,0.4) node [color=red] {$\frac{3}{4}$};
\draw (1.35,-0.4) node [color=red] {$\frac{3}{4}$};
\draw (1.6,0.8) node [color=red] {$\frac{1}{4}$};
\draw (1.6,-0.8) node [color=red] {$\frac{1}{4}$};
\draw (2.7,0.3) node [color=red] {$\frac{3}{4}$};
\draw (2.7,-0.3) node [color=red] {$\frac{3}{4}$};
\draw (2.4,0.7) node [color=red] {$\frac{1}{4}$};
\draw (2.4,-0.7) node [color=red] {$\frac{1}{4}$};

\draw (1.5,-2.1) node [color=black] {a $\frac{1}{4}$-supernormal
  labeling of $H_2$};
\end{tikzpicture}
\end{center}
For $H_1$, the product of labelings at the central edge is
$(\frac{3}{4})^3\cdot \frac{1}{2}=\frac{27}{128}<\frac{1}{4}$.
For $H_2$, the product of labelings at the central edge is
$(\frac{3}{4})^3\cdot \frac{5}{9}=\frac{15}{64}<\frac{1}{4}$.
Thus both $H_1$ and $H_2$ are $\frac{1}{4}$-supernormal. Thus
for $i=1,2$,
$\rho(H_i)>\rho_4$, and by Lemma \ref{l:contraction}, we get
$\rho(H_i')>\rho_4$. Contradiction!

Hence, all four branches of $F_0$ are paths. We denote $H$ by
$H^{(4)}_{i,j,k,l}$, where $i$, $j$, $k$, and $l$ ($i\leq j\leq k\leq
l$) are the length of the four paths.

Note that $\rho(H^{(4)}_{1,1,2,2})=\rho_4$ as shown by the following
$\frac{1}{4}$-normal labeling.
\begin{center}
\begin{tikzpicture}[thick, scale=0.5, bnode/.style={circle, draw,
    fill=black!50, inner sep=0pt, minimum width=4pt}, enode/.style={red}]
\foreach \x in {0,1.6,3.2,4.8,6.4}
    {
    \path[fill=gray]  (\x,0) node [bnode] {} -- (\x+0.8,0.8) node [bnode] {} --(\x+1.6,0)-- (\x+0.8,-0.8) node [bnode]{} --cycle;

}
\path[fill=gray]  (4.0,0.8)  -- (3.2,1.6) node [bnode] {} --(4.0,2.4)node [bnode] {}-- (4.8,1.6) node [bnode]{} --cycle;

\path[fill=gray]  (4.0,-0.8)  -- (3.2,-1.6) node [bnode] {} --(4.0,-2.4)node [bnode] {}-- (4.8,-1.6) node [bnode]{} --cycle;

\draw (8,0) node  [bnode] {};
\draw (1.25,0) node [enode] {$\frac{1}{4}$};
\draw (1.95,0) node [enode] {$\frac{3}{4}$};
\draw (2.85,0) node [enode] {$\frac{1}{3}$};
\draw (3.55,0) node [enode] {$\frac{2}{3}$};
\draw (4.45,0) node [enode] {$\frac{2}{3}$};
\draw (5.15,0) node [enode] {$\frac{1}{3}$};
\draw (6.05,0) node [enode] {$\frac{3}{4}$};
\draw (6.75,0) node [enode] {$\frac{1}{4}$};
\draw (3.77,1.25) node [enode] {$\frac{1}{4}$};
\draw (4.25,0.77) node [enode] {$\frac{3}{4}$};
\draw (3.77,-1.25) node [enode] {$\frac{1}{4}$};
\draw (4.25,-0.77) node [enode] {$\frac{3}{4}$};
\draw (3.2, -3) node [color=black] {the $\frac{1}{4}$-normal labeling
  of $H^{(4)}_{1,1,2,2}$.};
\end{tikzpicture}
  \end{center}

Therefore, except for $H^{(4)}_{1,1,2,2}$, the only possible candidates for
$H$ are $H^{(4)}_{1,1,1,l}$.
Furthermore,
if $l=5$, we can label $H^{(4)}_{1,1,1,5}$ as follows:
\begin{center}
 \begin{tikzpicture}[thick, scale=0.5, bnode/.style={circle, draw,
     fill=black!50, inner sep=0pt, minimum width=4pt}, enode/.style={color=red}]
\foreach \x in {0,1.6,3.2,4.8,6.4,8,9.6}
    {
     \path[fill=gray]  (\x,0) node [bnode] {} -- (\x+0.8,0.8) node [bnode] {} --(\x+1.6,0)-- (\x+0.8,-0.8) node [bnode]{} --cycle;

    }
 \path[fill=gray]  (2.4,0.8)  -- (1.6,1.6) node [bnode] {} --(2.4,2.4)node [bnode] {}-- (3.2,1.6) node [bnode]{} --cycle;

 \path[fill=gray]  (2.4,-0.8)  -- (1.6,-1.6) node [bnode] {} --(2.4,-2.4)node [bnode] {}-- (3.2,-1.6) node [bnode]{} --cycle;

 \draw (11.2,0) node  [bnode] {};
 \draw (1.25,0) node [enode] {$\frac{1}{4}$};
 \draw (1.95,0) node [enode] {$\frac{3}{4}$};

 \draw (2.65,1.25) node [enode] {$\frac{1}{4}$};
 \draw (2.17,0.77) node [enode] {$\frac{3}{4}$};
 \draw (2.65,-1.25) node [enode] {$\frac{1}{4}$};
 \draw (2.17,-0.77) node [enode] {$\frac{3}{4}$};
 \draw (2.85,0) node [enode] {$\frac{16}{27}$};

 \draw (3.55,0) node [enode] {$\frac{11}{27}$};
 \draw (5.15,0) node [enode] {$\frac{17}{44}$};
 \draw (6.77,0) node [enode] {$\frac{6}{17}$};
 \draw (8.37,0) node [enode] {$\frac{7}{24}$};
 \draw (9.97,0) node [enode] {$\frac{1}{4}$};

 \draw (4.45,0) node [enode] {$\frac{27}{44}$};
 \draw (6.05,0) node [enode] {$\frac{11}{17}$};
 \draw (7.65,0) node [enode] {$\frac{17}{24}$};
 \draw (9.25,0) node [enode] {$\frac{6}{7}$};
\draw (5,-3) node [color=black] {a strictly $\frac{1}{4}$-supernormal
  labeling of $H^{(4)}_{1,1,1,5}$};
 \end{tikzpicture}
 \end{center}
 Since $\frac{6}{7}+\frac{1}{4}>1$, this is a strictly $\frac{1}{4}$-supernormal labeling.
 We get $\rho(H^{(4)}_{1,1,1,5})>\rho_{4}$. So, by Lemma
 \ref{subgraph},
 we have $\rho(H^{(4)}_{1,1,1,m})>\rho_{4}$ if $m\geq 5$.

 For $l=4$, we can label $H^{(4)}_{1,1,1,4}$ as follows:
 \begin{center}
 \begin{tikzpicture}[thick, scale=0.5, bnode/.style={circle, draw,
     fill=black!50, inner sep=0pt, minimum width=4pt}, enode/.style={color=red}]
\foreach \x in {0,1.6,3.2,4.8,6.4,8}
    {
     \path[fill=gray]  (\x,0) node [bnode] {} -- (\x+0.8,0.8) node [bnode] {} --(\x+1.6,0)-- (\x+0.8,-0.8) node [bnode]{} --cycle;

    }
 \path[fill=gray]  (2.4,0.8)  -- (1.6,1.6) node [bnode] {} --(2.4,2.4)node [bnode] {}-- (3.2,1.6) node [bnode]{} --cycle;

 \path[fill=gray]  (2.4,-0.8)  -- (1.6,-1.6) node [bnode] {} --(2.4,-2.4)node [bnode] {}-- (3.2,-1.6) node [bnode]{} --cycle;

 \draw (9.6,0) node  [bnode] {};
 \draw (1.25,0) node [enode] {$\frac{1}{4}$};
 \draw (1.95,0) node [enode] {$\frac{3}{4}$};

 \draw (2.65,1.25) node [enode] {$\frac{1}{4}$};
 \draw (2.17,0.77) node [enode] {$\frac{3}{4}$};
 \draw (2.65,-1.25) node [enode] {$\frac{1}{4}$};
 \draw (2.17,-0.77) node [enode] {$\frac{3}{4}$};
 \draw (2.85,0) node [enode] {$\frac{16}{27}$};

 \draw (3.55,0) node [enode] {$\frac{11}{27}$};
 \draw (5.15,0) node [enode] {$\frac{17}{44}$};
 \draw (6.77,0) node [enode] {$\frac{6}{17}$};
 \draw (8.37,0) node [enode] {$\frac{7}{24}$};
 \draw (4.45,0) node [enode] {$\frac{27}{44}$};
 \draw (6.05,0) node [enode] {$\frac{11}{17}$};
 \draw (7.65,0) node [enode] {$\frac{17}{24}$};
\draw (5,-3) node [color=black] {a strictly $\frac{1}{4}$-subnormal
  labeling of $H^{(4)}_{1,1,1,4}$};
 \end{tikzpicture}
 \end{center}
 Since $\frac{17}{24}\cdot 1\cdot 1\cdot 1>\frac{1}{4}$, this is a
 strictly $\alpha$-subnormal.
So $\rho(H^{(4)}_{1,1,1,4})<\rho_{4}$.
Furthermore, by Lemma \ref{subgraph}, we get
$\rho(H^{(4)}_{1,1,1,l})<\rho_{4}$, for all $l=1,2,3,4$.
 \end{enumerate}
Therefore, all irreducible hypergraphs with spectral radius at most
$\rho_{r}$ are classified
 in the list of Theorem 3.
\end{proof}

From Corollary \ref{extend}, Theorem \ref{t2}, Theorem \ref{t1} and Theorem \ref{t3},  we have the following theorems.
\begin{theorem} \label{t4}
Let $r\geq 4$ and $\rho_r=(r-1)!\sqrt[r]{4}$.
If the spectral radius of a
connected $r$-uniform hypergraph $H$ is less than $\rho_r$,
then $H$ must be one of the following graphs:\\
 \begin{enumerate}
 \item
$A_{n}^{(r)}$, $D_{n}^{(r)}$, ${D'}_{n}^{(r)}$, $B_{n}^{(r)}$, ${B'}_{n}^{(r)}$,
$\bar{B}_{n}^{(r)}$, $BD_{n}^{(r)}$, $E_{6}^{(r)}$,  $E_{7}^{(r)}$,
$E_{8}^{(r)}$, $F_{2,3,3}^{(r)}$, $F_{2,2,j}^{(r)}$ (for $2\leq j\leq6$), $F_{1,3,j}^{(r)}$ (for $3\leq j\leq 13$), $F_{1,4,j}^{(r)}$ (for $4\leq j\leq 7$),  $F_{1,5,5}^{(r)}$, and  $G_{1,1:j:1,3}^{(r)}$ (for $0\leq j\leq 5$). Those are the
$r$-uniform hypergraphs extending from the hypergraphs in the list of Theorem \ref{t1} by
$r-3$ times.
\item  $H^{(r)}_{1,1,1,1}$, $H^{(r)}_{1,1,1,2}$, $H^{(r)}_{1,1,1,3}$,
  $H^{(r)}_{1,1,1,4}$. Those are the
$r$-uniform hypergraphs extending from the hypergraphs
from the list of Theorem \ref{t3} by $r-4$ times.
 \end{enumerate}
\end{theorem}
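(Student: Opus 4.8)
The plan is to prove Theorem \ref{t4} purely by transporting the already-established classifications downward through the reduction operation, never computing a single new spectral radius. The engine is Corollary \ref{extend}: extension preserves the strict inequality $\rho<\rho_{\bullet}$ in both directions, so a connected $r$-uniform $H$ with $\rho(H)<\rho_r$ can be matched with a lower-uniformity witness whose spectral radius also sits strictly below the corresponding threshold. The descent is controlled by Theorem \ref{t3}, which pins down exactly where an \emph{irreducible} hypergraph of small spectral radius can occur, and it terminates at either Theorem \ref{t3}'s list (uniformity $4$) or Theorem \ref{t1}'s list (uniformity $3$).

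Concretely, I would argue by descending induction on $r$. Let $H$ be connected $r$-uniform with $\rho(H)<\rho_r$ and $r\geq 4$. First apply the reducible/irreducible dichotomy of Theorem \ref{t3}. If $r\geq 5$, that theorem forces $H$ to be reducible. If $r=4$ and $H$ is irreducible, then $H$ lies in Theorem \ref{t3}'s finite list; since the proof of Theorem \ref{t3} exhibits $H^{(4)}_{1,1,2,2}$ as consistently $\tfrac14$-normal (hence $\rho=\rho_4$) and each $H^{(4)}_{1,1,1,j}$ for $1\leq j\leq 4$ as strictly $\tfrac14$-subnormal (hence $\rho<\rho_4$), the hypothesis $\rho(H)<\rho_4$ leaves exactly the four hypergraphs $H^{(4)}_{1,1,1,1},\dots,H^{(4)}_{1,1,1,4}$, which are the $r=4$ members of case 2. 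If instead $H$ is reducible, pass to an $(r-1)$-uniform $H'$ that $H$ extends (delete one leaf from every edge); $H'$ is again connected, and Corollary \ref{extend} gives $\rho(H')<\rho_{r-1}$. Iterating, the uniformity strictly decreases and Theorem \ref{t3} guarantees reducibility at every level with uniformity $\geq 5$, so the chain continues until it halts either at an irreducible $4$-uniform hypergraph or at a $3$-uniform hypergraph $H_3$. In the first case $H_3$-analogue is one of the $H^{(4)}_{1,1,1,j}$ as above, and re-extending $r-4$ times recovers $H$, placing it in case 2. In the second case $\rho(H_3)<\rho_3$ by the repeated use of Corollary \ref{extend}, so Theorem \ref{t1} identifies $H_3$ among its listed families, and re-extending $r-3$ times recovers $H$, placing it in case 1. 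Since extension is the exact inverse of reduction, the lists in cases 1 and 2 are by definition these iterated extensions, so every such $H$ is accounted for, and conversely each listed hypergraph has the required spectral radius by the same corollary applied upward.

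The main obstacle is purely the bookkeeping of the descent rather than any hard estimate, and there are three points I would be careful to verify. First, reduction must be well-defined up to isomorphism: an edge may carry several leaf vertices, but removing any one of them yields isomorphic $(r-1)$-uniform hypergraphs (the unremoved leaves remain leaves and the roles are symmetric), so the reduced object, and hence the whole chain, is unambiguous. Second, connectivity must be preserved at every reduction step, which it is, since deleting a leaf vertex from each edge neither disconnects nor merges the remaining structure. Third, and most importantly, I must ensure that Corollary \ref{extend} applies at \emph{each} step so that the \emph{strict} inequality is carried all the way down and the descent never lands on a boundary value $\rho=\rho_{r'}$; this is exactly the content of the corollary, so once Theorems \ref{t1} and \ref{t3} are in hand the argument closes with no further spectral computation. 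The only genuinely delicate logical checkpoint is the termination claim—that an irreducible small-spectral-radius hypergraph can appear only at uniformity $4$—and that rests entirely on Theorem \ref{t3}.
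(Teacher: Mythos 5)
Your proposal is correct and follows essentially the same route as the paper, which derives Theorem \ref{t4} directly from Corollary \ref{extend}, Theorem \ref{t1}, and Theorem \ref{t3}: use Theorem \ref{t3} to force reducibility for $r\geq 5$ and to isolate the irreducible $4$-uniform exceptions, then transport the strict inequality down through repeated reductions until Theorem \ref{t1} (or the $r=4$ list) applies. Your explicit attention to well-definedness of the reduction and preservation of connectivity is a reasonable elaboration of details the paper leaves implicit, but the logical skeleton is identical.
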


\begin{theorem}\label{t5}
Let $r\geq 4$ and $\rho_r=(r-1)!\sqrt[r]{4}$.
If the spectral radius of a
connected $r$-uniform hypergraph $H$ is equal to $\rho_r$,
then $H$ must be one of the following graphs:\\
\begin{enumerate}
 \item
$C_{n}^{(r)}$, $\tilde{D}_{n}^{(r)}$, $\tilde{B}_{n}^{(r)}$, $\widetilde{BD}_{n}^{(r)}$, $C_{2}^{(r)}$, $S^{(r)}_{4}$, $\tilde{E}_{6}^{(r)}$,
$\tilde{E}_{7}^{(r)}$, $\tilde{E}_{8}^{(r)}$,
 $F_{2,3,4}^{(r)}$, $F_{2,2,7}^{(r)}$, $F_{1,5,6}^{(r)}$, $F_{1,4,8}^{(r)}$,  $F_{1,3,14}^{(r)}$,  $G_{1,1:0:1,4}^{(r)}$, and $G_{1,1:6:1,3}^{(r)}$.
Those are the
$r$-uniform hypergraphs extending from the hypergraphs in the list of Theorem \ref{t2} by
$r-3$ times.
\item  $H^{(r)}_{1,1,2,2}$, which extends $r-4$ times from the hypergraph $H^{(4)}_{1,1,2,2}$ in Theorem \ref{t3}.
 \end{enumerate}
\end{theorem}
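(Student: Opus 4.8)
The plan is to deduce Theorem \ref{t5} from the three ingredients already in hand: the reduction/extension correspondence (Corollary \ref{extend}), the base classification at $r=3$ (Theorem \ref{t2}), and the classification of irreducible critical hypergraphs (Theorem \ref{t3}). The argument splits into a routine ``soundness'' direction and a ``completeness'' direction, and the whole point is that for $r\geq 5$ nothing genuinely new can appear.

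For soundness, I would first check that every hypergraph on the list attains $\rho_r$. Each hypergraph in item~1 is by definition the $(r-3)$-fold extension of one of the hypergraphs in the list of Theorem \ref{t2}, all of which have spectral radius $\rho_3$; applying Corollary \ref{extend} $r-3$ times lifts $\rho=\rho_3$ to $\rho=\rho_r$. Likewise $H^{(r)}_{1,1,2,2}$ in item~2 is the $(r-4)$-fold extension of $H^{(4)}_{1,1,2,2}$, which has spectral radius $\rho_4$ by Theorem \ref{t3}; applying Corollary \ref{extend} $r-4$ times again gives $\rho=\rho_r$.

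For completeness, let $H$ be a connected $r$-uniform hypergraph with $\rho(H)=\rho_r$ and $r\geq 4$. I would iterate the reduction. As long as the current uniformity $s$ satisfies $s\geq 5$, Theorem \ref{t3} forces the current hypergraph (whose spectral radius is the critical value $\rho_s$) to be reducible, so it extends an $(s-1)$-uniform hypergraph whose spectral radius is exactly $\rho_{s-1}$ by Corollary \ref{extend}. This strictly decreasing chain terminates at uniformity $4$, producing a connected $4$-uniform hypergraph $K$ with $\rho(K)=\rho_4$ such that $H$ is obtained from $K$ by $r-4$ successive extensions. Now $K$ is either irreducible or reducible. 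If $K$ is irreducible, then the $r=4$ part of Theorem \ref{t3} forces $K=H^{(4)}_{1,1,2,2}$, the only irreducible hypergraph with $\rho=\rho_4$, which yields item~2. If $K$ is reducible, one more reduction gives a $3$-uniform hypergraph $L$ with $\rho(L)=\rho_3$ (again by Corollary \ref{extend}), and Theorem \ref{t2} lists all such $L$; extending $L$ back up $r-3$ times recovers exactly the families of item~1. Theorem \ref{t4} would be proved in the same way, replacing ``$=$'' by ``$<$'', invoking Theorem \ref{t1} in place of Theorem \ref{t2} and the $H^{(4)}_{1,1,1,l}$ ($l\le 4$) part of Theorem \ref{t3}.

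The step I expect to require the most care is checking that reduction and extension are genuinely inverse to one another on connected hypergraphs, so that the named families at level $r$ match the named families one level down: one must verify that removing a leaf from every edge preserves connectedness and that re-attaching a fresh leaf to every edge returns the original hypergraph up to isomorphism, the only subtlety being edges that carry several leaves, where the leaves are interchangeable. The genuinely substantive input is Theorem \ref{t3}: it is precisely the guarantee that for $r\geq 5$ no irreducible critical hypergraph exists, and that at $r=4$ the single hypergraph $H^{(4)}_{1,1,2,2}$ is the only new critical one. This is why item~2 must be stated separately from the extended $r=3$ families of item~1, and it is the reason the induction on $r$ closes with no further exceptional cases.
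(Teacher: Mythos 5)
Your proposal is correct and follows essentially the same route as the paper, which derives Theorem \ref{t5} directly from Corollary \ref{extend}, Theorem \ref{t2}, and Theorem \ref{t3}; your write-up merely makes explicit the iterated reduction argument that the paper leaves implicit. No gaps.
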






%

\end{document}